\theoremstyle{plain}
\newtheorem{theorem}{Theorem}[section]
\newtheorem{maintheorem}{Theorem}
\newtheorem{proposition}[theorem]{Proposition}
\newtheorem{lemma}[theorem]{Lemma}
\newtheorem{corollary}[theorem]{Corollary}
\theoremstyle{definition}
\newtheorem{definition}[theorem]{Definition}
\newtheorem{example}[theorem]{Example}
\newtheorem{construction}[theorem]{Construction}
\newtheorem{remark}[theorem]{Remark}
\newtheorem{convention}[theorem]{Convention}
\newcommand{\nc}{\newcommand}
\nc{\dmo}{\DeclareMathOperator}
\nc{\Q}{\mathbb{Q}}
\nc{\F}{\mathbb{F}}
\nc{\R}{\mathbb{R}}
\nc{\Z}{\mathbb{Z}}
\nc{\C}{\mathbb{C}}
\nc{\N}{\mathbb{N}}
\nc{\Ell}{\mathcal{L}}
\nc{\M}{\mathcal{M}}
\nc{\K}{\mathcal{K}}
\nc{\I}{\mathcal{I}}
\nc{\T}{\mathcal T}
\nc{\U}{\mathcal U}
\nc{\disk}{\mathbb{D}}
\nc{\hyp}{\mathbb{H}}
\renewcommand{\P}{\mathbb{P}}
\nc{\CP}{\mathbb{CP}}
\nc{\RP}{\mathbb{RP}}
\nc{\cS}{\mathcal{S}}
\nc{\inv}{^{-1}}
\nc{\s}{\sigma}
\dmo{\Mod}{Mod}
\dmo{\PMod}{PMod}
\dmo{\LMod}{LMod}
\dmo{\Diff}{Diff}
\dmo{\Homeo}{Homeo}
\dmo{\dist}{dist}
\dmo\BDiff{BDiff}
\dmo\SO{SO}
\dmo\Hom{Hom}
\dmo\SL{SL}
\dmo\rank{rank}
\dmo\sig{sig}
\dmo\Out{Out}
\dmo\Aut{Aut}
\dmo\Inn{Inn}
\dmo\GL{GL}
\dmo\PGL{PGL}
\dmo\Gr{Gr}
\dmo\PSL{PSL}
\dmo\EU{EU}
\dmo\BHomeo{BHomeo}
\dmo\EHomeo{EHomeo}
\dmo\EDiff{EDiff}
\dmo\Disc{Disc}
\dmo\Aff{Aff}
\nc{\pair}[1]{\ensuremath{\left\langle #1 \right\rangle}}
\nc{\abs}[1]{\ensuremath{\left| #1 \right|}}
\nc{\abcd}[4]{\ensuremath{\left(\begin{array}{cc} #1 & #2 \\ #3 & #4 \end{array}\right)}}
\dmo{\Isom}{Isom}
\nc{\normal}{\vartriangleleft}
\dmo{\Vol}{Vol}
\dmo{\im}{Im}
\dmo{\Push}{Push}
\dmo{\Conf}{Conf}
\dmo{\PConf}{PConf}
\dmo{\id}{id}
\dmo{\Jac}{Jac}
\dmo{\Pic}{Pic}
\dmo{\Stab}{Stab}
\dmo{\Arf}{Arf}
\dmo{\End}{End}
\dmo{\Gal}{Gal}
\dmo{\lcm}{lcm}
\dmo{\ab}{ab}
\dmo{\opp}{op}
\dmo{\SU}{SU}
\dmo{\OT}{\Omega \mathcal{T}}
\dmo{\OM}{\Omega \mathcal{M}}
\dmo{\spin}{spin}
\dmo{\even}{even}
\dmo{\odd}{odd}
\dmo{\comp}{\mathcal{H}}
\dmo{\Mgk}{\mathcal{M}_{g, \underline{\kappa}}}
\dmo{\orb}{orb}
\dmo{\AJ}{AJ}
\dmo{\Ck}{\mathsf{C}(\underline{\kappa})}
\dmo{\Int}{Int}
\dmo{\pr}{pr}
\dmo{\lab}{lab}
\dmo{\Sym}{Sym}
\dmo{\Ann}{Ann}
\dmo{\Rad}{Rad}
\dmo{\Ind}{Ind}
\DeclareRobustCommand{\subsetsof}{\genfrac [ ]{0pt}{}}
\nc{\Span}[1]{\operatorname{Span}(#1)}
\renewcommand{\epsilon}{\varepsilon}
\renewcommand{\tilde}{\widetilde}
\renewcommand{\le}{\leqslant}
\nc{\coloneq}{\mathrel{\mathop:}\mkern-1.2mu=}
\nc{\margin}[1]{\marginpar{\scriptsize #1}}
\nc{\para}[1]{\medskip\noindent\textbf{#1.}}
\definecolor{myblue}{RGB}{102,153, 255}
\definecolor{myred}{RGB}{204,0,0}
\definecolor{mygreen}{RGB}{0,204,0}
\definecolor{myorange}{RGB}{255,102,0}
\definecolor{mypurple}{RGB}{138,43,226}
\nc{\red}[1]{\textcolor{myred}{#1}}
\nc{\blue}[1]{\textcolor{myblue}{#1}}
\nc{\proofof}[1]{\noindent {\em Proof (of #1).}}
\nc{\SB}[1]{{\bf SB}$(#1)$}
\nc{\SC}[1]{{\bf SC}$(#1)$}
\nc{\cSC}[1]{{\bf cSC}$(#1)$}
\nc{\AB}[1]{{\bf AB}$(#1)$}
\nc{\AC}[1]{{\bf AC}$(#1)$}
\nc{\IH}[1]{{\bf IH}$(#1)$}
\title{Totally symmetric sets in the general linear group}
\author{Noah Caplinger and Nick Salter}
\email{NC: ncaplinger@gatech.edu}
\email{NS: nsalter@nd.edu}
\thanks{NC is supported by the President's Undergraduate Research Award at Georgia Tech. NS is supported by NSF Award No. DMS-2153879.}
\address{NC: Department of Mathematics, Georgia Institute of Technology, 686 Cherry St NW, Atlanta, GA 30332}
\address{NS: Department of Mathematics, University of Notre Dame, Hurley Hall, Notre Dame, IN 46556}
\date{April 25, 2022}
\begin{document}
\maketitle

\begin{abstract}
A totally symmetric set is a finite subset of a group for which any permutation of the elements can be realized by conjugation in the ambient group. Such sets are rigid under homomorphisms, and so exert a great deal of control over the algebraic structure. In this paper we introduce a more general perspective on total symmetry, and formulate a notion of ``irreducibility'' for totally symmetric sets in the general linear group. We classify irreducible totally symmetric sets, as well as those of maximal cardinality.
\end{abstract}

\section{Introduction}
Let $G$ be a group. The notion of a {\em totally symmetric set} in $G$ was introduced by Kordek--Margalit \cite{Kordek-Margalit} as an axiomatization of a structure that had previously appeared in a variety of contexts in geometric group theory. A totally symmetric set $\mathcal A \subset G$ is a finite set of $k$ elements, {\em such that any permutation of the elements of $\mathcal A$ can be obtained by conjugation by $G$} (see \Cref{definition:totalsym} and \Cref{example:commtss}). Totally symmetric sets are tightly controlled under group homomorphisms (\Cref{remark:hom}) and have been applied to better understand rigidity properties of many of the central groups in contemporary geometric group theory, e.g. braid groups and mapping class groups; see below. In these applications, one typically further insists that the elements of $\mathcal A$ pairwise-commute; $\mathcal A$ then acts as an analogue of a maximal torus in a compact Lie group.

In this paper we study and classify totally symmetric sets in the general linear group $\GL_n(\C)$---in fact, it is more natural to consider totally symmetric sets of {\em endomorphisms}. Our first main result shows that large totally symmetric sets do not act on low-dimensional spaces.

\begin{maintheorem}\label{theorem:sizebound}
Any totally symmetric set $\mathcal A \subset \End(\C^n)$ has cardinality $k \le n+1$. If $\mathcal A$ is moreover commutative, then $k \le n$.  
\end{maintheorem}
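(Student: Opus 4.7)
The plan is to normalize so that $\sum_i A_i = 0$: since the average $\bar A = \tfrac{1}{k}\sum A_i$ is fixed under conjugation by every $g_\sigma \in \mathrm{GL}_n(\C)$ realizing a permutation of $\mathcal A$, replacing each $A_i$ with $A_i - \bar A$ preserves the totally symmetric structure and $|\mathcal A|$ while making the sum vanish. Let $N(\mathcal A), C(\mathcal A) \subset \mathrm{GL}_n(\C)$ denote the setwise and pointwise stabilizers of $\mathcal A$, so total symmetry gives $N(\mathcal A)/C(\mathcal A) \cong S_k$. The linear map $\phi \colon \C^k \to \End(\C^n)$, $(c_i) \mapsto \sum c_i A_i$, is $S_k$-equivariant (with $S_k$ acting on $\C^k$ by permutation and on $L := \mathrm{im}(\phi)$ by conjugation, well-defined modulo $C(\mathcal A)$), so $\ker \phi$ is a subrepresentation of $\C^k = \Delta \oplus \Delta^{\perp}$ and must be one of $0, \Delta, \Delta^{\perp}, \C^k$. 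The normalization forces $\Delta \subseteq \ker \phi$, and distinctness of the $A_i$ (with $k \geq 2$) excludes the larger possibilities, so $\dim L = k-1$.

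For the general bound $k \leq n+1$, it suffices to exhibit $v \in \C^n$ such that the evaluation $\mathrm{ev}_v \colon L \to \C^n$, $f \mapsto fv$, is injective, since this gives $k - 1 = \dim L \leq n$. Equivalently, we seek $v$ outside the ``bad locus'' $\mathcal O = \bigcup_{f \in L \setminus \{0\}} \ker f$, an $N(\mathcal A)$-invariant union of proper linear subspaces. The main obstacle is ruling out $\mathcal O = \C^n$: the plan is to analyze the incidence variety $I = \{(v, [f]) \in \C^n \times \P(L) : fv = 0\}$ along with both projections, using the $S_k$-irreducibility of $L \cong \Delta^{\perp}$ and the $N(\mathcal A)$-invariance of the common image $\sum_f \mathrm{im}(f)$ and common kernel $\bigcap_f \ker(f)$ of elements of $L$: any nontrivial proper such subspace permits a reduction to a strictly smaller $N(\mathcal A)$-invariant situation.

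For the commutative bound $k \leq n$, the argument is combinatorial. Decompose $\C^n = \bigoplus_{\alpha \in S} V^{\alpha}_{\mathrm{gen}}$ into joint generalized eigenspaces, with $S \subset \C^k$ the set of joint eigenvalue tuples; conjugation by $g_\sigma$ sends $V^{\alpha}_{\mathrm{gen}}$ to $V^{\sigma\cdot\alpha}_{\mathrm{gen}}$, so $S$ is $S_k$-invariant, and the normalization $\sum A_i = 0$ forces $S \subset \Delta^{\perp}$ by comparing the semisimple part of $\sum A_i$ on each generalized eigenspace. Running the same subrepresentation analysis as above yields $\mathrm{Span}(S) = \Delta^{\perp}$, unless $S = \{0\}$ (all $A_i$ nilpotent); the latter case is handled by restricting to the proper $N(\mathcal A)$-invariant subspace $V_1 = \sum_i \mathrm{im}(A_i) \subsetneq \C^n$ (proper since commuting nilpotents are simultaneously strictly upper-triangularizable) and inducting on $n$. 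Granted $\mathrm{Span}(S) = \Delta^{\perp}$, an orbit count in $\C^k$ shows any $S_k$-invariant subset spanning $\Delta^{\perp}$ has at least $k$ elements, since the smallest nontrivial $S_k$-orbit---namely that of $(k-1, -1, \ldots, -1)$---has size exactly $k$ and already spans $\Delta^{\perp}$. Hence $k \leq |S| \leq n$.
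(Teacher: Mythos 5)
Your approach is genuinely different from the paper's, and the framing is appealing: after centering (so $\sum_i A_i = 0$), the span $L$ of the $A_i$ is an $S_k$-subrepresentation of $\End(\C^n)$ isomorphic to the standard representation $\Delta^\perp \subset \C^k$, so $\dim L = k-1$; one then tries to bound $\dim L$ by $n$. That setup, and the later orbit count on the set $S$ of joint eigenvalue tuples, is correct and quite slick. But there are two genuine gaps, each at a place where the paper's proof does substantial work.

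\textbf{The bound $k \le n+1$ is not proven.} You reduce to ruling out $\mathcal O = \bigcup_{0 \ne f \in L} \ker f = \C^n$, but the incidence-variety analysis is a plan, not an argument, and I do not see how to make it work. The point is that even after killing $\bigcap_f \ker f$ and $\sum_f \mathrm{im}(f)$, there can exist irreducible $\Sigma_k$-invariant spaces of singular matrices, so irreducibility of $L \cong \Delta^\perp$ does not directly rule out $\mathcal O = \C^n$. Note that this is exactly where the paper's proof becomes intricate: the statement \SB{n} is established by a three-phase argument (\Cref{prop:sbn}) that is intertwined with the classification statements \AB{n-1}, \AC{n-1} for eigenspace arrangements; the ``derived totally symmetric set'' of Phase 3 exists precisely to handle situations where naive kernel/image reductions stall. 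Ruling out $\mathcal O = \C^n$ is, in effect, the whole content of the theorem, and the proposal leaves it open.

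\textbf{The nilpotent case in the commutative bound has a gap.} When all $A_i$ are nilpotent (i.e., $S=\{0\}$), you restrict to $V_1 = \sum_i \mathrm{im}(A_i)$ and invoke induction. But the restriction $\mathcal A|_{V_1}$ can be degenerate, in which case the induction tells you nothing; and this does occur at the boundary of the inequality. Concretely, take $n = k = 2$ with $A_1 = \left(\begin{smallmatrix}0 & 1\\ 0 & 0\end{smallmatrix}\right)$ and $A_2 = \left(\begin{smallmatrix}0 & -1\\ 0 & 0\end{smallmatrix}\right)$: this is a commutative totally symmetric set (conjugate by $\mathrm{diag}(1,-1)$) of nilpotents, $V_1 = \langle e_1\rangle$ is one-dimensional, and $\mathcal A|_{V_1}$ is the degenerate zero set, so the proposed induction gives no bound. (More generally this phenomenon persists for the simplex construction of \Cref{example:simplexconst}.) One must instead analyze both restriction and quotient along the generalized eigenspace filtration, which is what the paper's Phase 2 and Phase 3 of \Cref{prop:sbn} do.

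The orbit-count observation itself --- that any nonzero $\Sigma_k$-orbit in $\Delta^\perp$ has size at least $k$, so $k \le |S| \le n$ once some $A_i$ is non-nilpotent --- is a clean alternative to the paper's argument in that sub-case, and it is worth keeping. But as written the proposal proves neither bound in the theorem: the $k \le n+1$ part is a sketch, and the $k \le n$ part breaks down precisely on the nilpotent/degenerate boundary where the paper invests its effort.
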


 In both the commutative and non-commutative setting, the totally symmetric sets of maximal cardinality can be classified completely. We note that these results give a sharp answer to a question posed by the first author in \cite{Caplinger-Kordek}.

\begin{maintheorem}
\label{maintheorem:max}
Let $\mathcal{A}\subset \End(\C^n)$ be a totally symmetric set (commutative or otherwise) of the maximal cardinality allowed by \Cref{theorem:sizebound}. Then
\begin{enumerate}
    \item If $\mathcal A$ is noncommutative and $n \ne 5$, then it arises via the ``noncommutative simplex construction'' of \Cref{example:ncsimplex}. 
    \item If $\mathcal A$ is noncommutative and $n = 5$, then it arises either via the noncommutative simplex construction or else is the ``$\tilde \Sigma_5$ construction'' of \Cref{s5construction}.
    \item If $\mathcal A$ is commutative and $n \ne 4$, then $\mathcal{A}$ is either the ``standard construction'' of \Cref{example:standard} or the ``simplex construction'' of \Cref{example:simplex}.
    \item If $\mathcal A$ is commutative and $n = 4$, then $\mathcal{A}$ is either standard, simplex, or the sporadic construction of \Cref{lemma:4}.
\end{enumerate}
\end{maintheorem}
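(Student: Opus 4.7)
The plan is to extract from the totally symmetric data a near-canonical $S_k$-action on $\C^n$ and then invoke low-dimensional representation theory of the symmetric group. Concretely, for each $\sigma \in S_k$ one picks $g_\sigma \in \GL_n(\C)$ with $g_\sigma A_i g_\sigma^{-1} = A_{\sigma(i)}$; the subgroup $G \subseteq \GL_n(\C)$ generated by the $g_\sigma$ fits in an exact sequence
\[
1 \to C(\mathcal{A}) \to G \to S_k \to 1,
\]
where $C(\mathcal{A})$ is the centralizer of $\mathcal{A}$. This makes $\C^n$ into a projective $S_k$-representation, and after splitting off the action of a maximal commutative subalgebra of the algebra generated by $\mathcal{A}$ (always nontrivial, and in the commutative case equal to this algebra itself), one reduces the question to understanding genuine linear $S_k$-representations on subspaces of $\C^n$ and how $\mathcal{A}$ distributes among their isotypic components.

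Next, maximality of $k$ forces the dimension to be extremal relative to the available irreducible $S_k$-representations. For $k \ge 7$, the only irreducibles of dimension $\le k-1$ are the trivial, the sign, the standard representation $V$ of dimension $k-1$, and its sign twist $V \otimes \mathrm{sgn}$. When $k = n+1$ (noncommutative case), this leaves essentially $\C^n \cong V$; when $k = n$ (commutative case), the only possibilities are $V \oplus \mathrm{triv}$ or $V \oplus \mathrm{sgn}$. With the $S_k$-module structure on $\C^n$ pinned down, the requirement that the $A_i$ form a single $S_k$-orbit under conjugation translates, via Schur's lemma applied to the isotypic decomposition of $\End(\C^n)$, into a finite-dimensional linear classification. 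I expect this to match directly with the standard, simplex, and noncommutative simplex constructions of \Cref{example:standard}, \Cref{example:simplex}, and \Cref{example:ncsimplex}.

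The exceptional cases $n = 4$ and $n = 5$ correspond precisely to the small values of $k$ at which $S_k$ has additional low-dimensional irreducibles beyond the standard one. For $k = 4$, the group $S_4$ admits a $2$-dimensional irreducible (via $S_4 \twoheadrightarrow S_3$), opening up extra decomposition patterns for $\C^4$ such as $2 \oplus 2$ or $2 \oplus 1 \oplus 1$; these yield the sporadic construction of \Cref{lemma:4}. For $k = 6$ on $\C^5$, the outer automorphism of $S_6$ exchanges the two non-conjugate $5$-dimensional irreducibles, and the $\tilde \Sigma_5$ construction of \Cref{s5construction} arises by using the ``other'' $5$-dimensional representation in place of the naive standard one. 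In both cases, one must verify directly that these families do satisfy total symmetry and that they are genuinely inequivalent (under $\GL_n$-conjugation) to the generic constructions.

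The main obstacle I foresee is the passage from the projective to the linear $S_k$-action: because $C(\mathcal{A})$ can be nontrivial and the extension $1 \to C(\mathcal{A}) \to G \to S_k \to 1$ need not split, one must work inside carefully chosen isotypic subspaces (or argue via $\PGL_n$ and then lift) to legitimately invoke ordinary representation theory. A secondary technical difficulty is exhaustiveness: even once the $S_k$-module structure on $\C^n$ is determined, one must classify all $S_k$-orbits of size $k$ inside $\End(\C^n)^{S_k\text{-equivariant}}$ consisting of pairwise-compatible (and in the commutative case pairwise-commuting) endomorphisms, and check that each such orbit reduces to one of the constructions on the list without producing additional sporadic families.
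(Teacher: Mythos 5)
Your proposal takes a fundamentally different route from the paper, and unfortunately it has several genuine gaps rather than being a valid alternative.

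The paper's actual proof never passes to a linear or projective $\Sigma_k$-representation on $\C^n$. Instead it runs a simultaneous induction on $n$ through \emph{arrangements}: it examines the arrangements of generalized eigenspaces $E^i_{\lambda,c}$, kernels, and images of the $A_i$, applies the classification of maximal totally symmetric arrangements (\AC{n-1}), and when these are degenerate it constructs ``derived'' totally symmetric sets on quotient spaces and inducts. This sidesteps exactly the two obstacles your plan runs into.

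The first gap is the passage from a set map to a representation, which you flag yourself but underestimate. The realization map $\rho: \Sigma_k \to \GL(V)$ is only a set map (\Cref{remark:setmap}), and the group $G$ generated by a choice of $g_\sigma$ is a representation of $G$, not of $\Sigma_k$. To treat $\C^n$ as a projective $\Sigma_k$-representation you would need $G \cap C(\mathcal A)$ to consist of scalars. For the simplex construction this fails badly: with $A_i = \lambda I + N_i$ nilpotent plus scalar, $C(\mathcal A)$ contains the full upper-triangular block $\abcd{qI}{R}{0}{q}$ for arbitrary $R$. There is no canonical isotypic splitting ``always available'' in the non-semisimple setting; the paper spends most of \Cref{section:csc} precisely on controlling the Jordan structure of the $A_i$ (working through filtrations $E_{\lambda,c}$, derived arrangements $\ker T_i$, $\im T_i$, and so on), which a Schur-lemma-based orbit count would not capture.

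The second gap is that your identification of the exceptional cases is factually off. The $\tilde\Sigma_5$ construction (\Cref{s5construction}) is a \emph{five}-element totally symmetric set in $\GL_4(\C)$ coming from the \emph{basic projective representation} of $\Sigma_5$, i.e.\ from a genuine linear representation of the Schur double cover $\tilde\Sigma_5$. It has nothing to do with the outer automorphism of $\Sigma_6$ or with $\C^5$; there is no ``other $5$-dimensional representation'' in play (and indeed you cannot even reach this via ordinary $\Sigma_5$-representations, because the basic representation only exists projectively). Likewise, for the commutative $n=4$ sporadic family in \Cref{lemma:4}, the paper derives the peculiar constraint $3\mu^2 + 2\mu + 3 = 0$ through a direct matrix computation on the suspended block form, not through a decomposition of $\C^4$ into $\Sigma_4$-irreducibles; the matrices are unipotent perturbations of scalars, so the space is not a direct sum of distinct isotypic components in the way your plan assumes. (Incidentally, there appears to be an off-by-one in the paper's statement of part (2): the $\tilde\Sigma_5$ construction lives in dimension $4$, not $5$, consistent with \Cref{theorem:tsabound} and \Cref{s5construction}.)

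In short, your approach replaces the paper's inductive arrangement analysis by representation theory of $\Sigma_k$, but this replacement cannot be made to work without resolving the centralizer/set-map problem and without a mechanism to handle non-semisimple (Jordan-block) totally symmetric sets; and the proposed treatment of the exceptional cases misidentifies both the dimension and the group-theoretic mechanism ($\Sigma_5$ Schur cover, not $\Sigma_6$ outer automorphism).
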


Our main tool is the formulation of a notion of {\em irreducibility} of a totally symmetric set in $\End(\C^n)$ (\Cref{definition:irreducible}). We find that irreducible commutative totally symmetric sets are very rigid, and admit a classification, like the irreducible representations of the symmetric group, essentially in terms of integer partitions.

To formulate this classification, we define a {\em weight} as a function $\vec \lambda: [k] \to \C$ (here and throughout, $[k]$ denotes a set of $k$ elements). In \Cref{example:partition}, we show how to use $\vec \lambda$ to construct a totally symmetric set $\mathcal A(\vec \lambda)$ acting as endomorphisms of vector space $V_{\vec\lambda}$. A brief description of $V_{\vec \lambda}$ is as the space spanned by functions of the form $\lambda \circ \sigma$ for $\sigma$ in the symmetric group $\Sigma_k$ (we treat two such functions as identical if they agree everywhere). The level sets of $\vec \lambda$ partition $[k]$, say with parts of size $k_1, \dots, k_m$; basic combinatorics computes $\dim(V_{\vec \lambda})$ as the multinomial coefficient $\binom{k}{k_1, \dots, k_m}$. The element $A_i$ acts diagonally on $V_{\vec \lambda}$, taking the eigenvalue $\lambda(\sigma(i))$ on the basis element $\lambda \circ \sigma$, and $\Sigma_k$ acts on $V_{\vec \lambda}$ by precomposition, yielding total symmetry. We find that every irreducible commutative totally symmetric set has this form.

\begin{maintheorem}
\label{mainthm:summary}
Let $\mathcal A \subset \End(\C^n)$ be an irreducible commutative totally symmetric set of cardinality $k$. Then there is a weight $\vec \lambda$ for which 
\[
\mathcal A \cong \mathcal A (\vec \lambda).
\]
The dimension of the space $V_{\vec \lambda} \cong \C^n$ on which $\mathcal A(\vec \lambda)$ acts is given by the multinomial coefficient $\binom{k}{k_1,\dots, k_m}$.
\end{maintheorem}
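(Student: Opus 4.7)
The plan is to exploit commutativity to simultaneously diagonalize $\mathcal A$, organize the resulting joint eigenspaces into $\Sigma_k$-orbits, and invoke irreducibility to pin down a unique orbit of one-dimensional joint eigenspaces; these will then be matched with the basis $\{\vec\lambda \circ \sigma\}$ of $V_{\vec \lambda}$ in the natural way.

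First I would establish that the $A_i$ are simultaneously diagonalizable. Since each $A_i$ is $\GL_n(\C)$-conjugate to $A_1$, the $A_i$ share a common Jordan form. Writing $A_i = S_i + N_i$ for the semisimple--nilpotent decomposition, uniqueness shows that conjugation by any $g_\sigma$ preserves it, so $\{S_i\}$ and $\{N_i\}$ are each permuted by $\Sigma_k$ in the same way as $\{A_i\}$. The joint eigenspace decomposition for the commuting semisimple family $\{S_i\}$ is then $\mathcal A$-stable (since each $N_j$ commutes with every $S_i$) and $\Sigma_k$-equivariant, and one uses irreducibility to rule out nontrivial nilpotent parts. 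So $\C^n = \bigoplus_{\vec\mu \in W} E_{\vec\mu}$ for a finite weight set $W \subset \C^k$, where $E_{\vec\mu}$ denotes the joint $\vec\mu$-eigenspace.

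The conjugator $g_\sigma$ carries $E_{\vec\mu}$ to $E_{\vec\mu \circ \sigma^{-1}}$, making $W$ a $\Sigma_k$-set and the decomposition $\Sigma_k$-equivariant. Any nontrivial partition of $W$ into $\Sigma_k$-subsets, or any $\Sigma_k$-equivariant family of proper subspaces inside the $E_{\vec\mu}$, would produce a proper, $\Sigma_k$-compatible, $\mathcal A$-invariant splitting of $\C^n$; irreducibility thus forces $W$ to be a single $\Sigma_k$-orbit and each $E_{\vec\mu}$ to be one-dimensional.

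Finally, fix $\vec\lambda \in W$. Its $\Sigma_k$-stabilizer is the Young subgroup $\Sigma_{k_1}\times \cdots \times \Sigma_{k_m}$ determined by the level sets of $\vec\lambda$, so $n = |W| = k!/(k_1! \cdots k_m!) = \binom{k}{k_1,\dots,k_m}$. Sending the basis vector $\vec\lambda \circ \sigma$ of the constructed $V_{\vec\lambda}$ to a chosen nonzero vector of $E_{\sigma \cdot \vec\lambda}$ defines an isomorphism intertwining $\mathcal A(\vec\lambda)$ with $\mathcal A$, because on both sides $A_i$ scales the relevant vector by $\lambda(\sigma(i))$. The main obstacle is each of the three places where irreducibility is invoked---eliminating the nilpotent parts, collapsing $W$ to a single orbit, and collapsing each $E_{\vec\mu}$ to a line. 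In each case one must convert a $\Sigma_k$-equivariant splitting of $\C^n$ into an honest reduction of $\mathcal A$ in the sense of \Cref{definition:irreducible}, which requires a short but careful argument from the formal definition.
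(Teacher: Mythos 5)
Your route is genuinely different from the paper's. The paper first introduces the \emph{depth} of an eigenvalue, proves that an irreducible $\mathcal A$ is an induction $\Ind_{k-p}^k(\mu)(\mathcal A'\setminus\mu)$ from its maximal-depth eigenspaces (\Cref{mainthm:irred}), then inducts on $k$ (\Cref{corollary:partition}). You instead diagonalize simultaneously, decompose $V$ into joint eigenspaces $E_{\vec\mu}$, and use irreducibility to collapse the weight set $W$ to a single $\Sigma_k$-orbit with each $E_{\vec\mu}$ a line; the isomorphism with $\mathcal A(\vec\lambda)$ is then immediate. This is a more direct proof that avoids the induction machinery; the paper's longer route pays off because the authors actually need the induction construction and the depth statistic elsewhere (e.g.\ \Cref{prop:indirred} and the counting arguments of the second half), whereas your argument gets the classification in one stroke.

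The one place that is not merely ``careful unwinding of the definition'' is your third invocation of irreducibility, collapsing each $E_{\vec\mu}$ to a line. For the first two invocations (killing nilpotent parts, collapsing $W$ to an orbit) the candidate subspace is \emph{canonical} --- the span of simultaneous eigenvectors, or $\bigoplus_{\vec\mu\in O}E_{\vec\mu}$ for an orbit $O$ --- and is therefore invariant under \emph{every} realization map, so reducibility is immediate. But for the third, you first have to \emph{construct} a $\Sigma_k$-equivariant family of proper subspaces $F_{\vec\mu}\subsetneq E_{\vec\mu}$, and this is not automatic from a given realization map $\rho$: if you set $F_{\sigma\cdot\vec\lambda}=\rho(\sigma)F_{\vec\lambda}$ you must check well-definedness and $\rho$-invariance, and since $\rho$ is only a set map, $\rho(\tau)\rho(\sigma)v$ need not be proportional to $\rho(\tau\sigma)v$. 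The missing lemma is: because the $A_i$ are simultaneously diagonal with joint eigenspaces $E_{\vec\mu}$, \emph{any} $P\in\GL(V)$ that carries each $E_{\vec\mu}$ onto $E_{\sigma\cdot\vec\mu}$ already satisfies $PA_iP^{-1}=A_{\sigma(i)}$, hence is a legitimate value for a realization map. With that freedom you may pick one nonzero $v_{\vec\mu}\in E_{\vec\mu}$ per eigenspace and \emph{rig} $\rho'(\sigma)$ to send $v_{\vec\mu}$ to $v_{\sigma\cdot\vec\mu}$; then $\bigoplus\pair{v_{\vec\mu}}$ is a proper $(\mathcal A,\rho')$-invariant subspace, contradicting irreducibility (which, per \Cref{definition:irreducible}, must hold for \emph{all} realization maps). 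So the gap is not ``convert a splitting into a reduction'' as you frame it --- it is producing the splitting, and the mechanism is the elasticity of $\rho$ over the centralizer of $\mathcal A$. One further small point: with the left action $\sigma\cdot\vec\mu=\vec\mu\circ\sigma^{-1}$, the $A_i$-eigenvalue on $E_{\sigma\cdot\vec\lambda}$ is $\vec\lambda(\sigma^{-1}(i))$, so the identification should send $\vec\lambda\circ\sigma$ to $E_{\vec\lambda\circ\sigma}=E_{\sigma^{-1}\cdot\vec\lambda}$ rather than to $E_{\sigma\cdot\vec\lambda}$.
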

See \Cref{definition:iso} for the meaning of an isomorphism of totally symmetric sets. 

As is already suggested by the terminology of ``weights'' and ``irreducibility'' for totally symmetric sets (as well as by the role played by integer partitions in the classification), a central theme of the paper is the exploration of how concepts from representation theory have analogues in this setting. The core of \Cref{mainthm:summary} is established in \Cref{mainthm:irred}, where we show that every irreducible commutative totally symmetric set has the structure of an {\em induction} (defined in \Cref{induction}) which behaves similarly to its representation-theoretic namesake. 

One important point of departure for these theories is the failure of {\em semisimplicity}---where a representation of a finite group decomposes as a direct sum of irreducibles, a reducible totally symmetric set can be built as a nontrivial extension of lower-dimensional totally symmetric sets (i.e. with nontrivial Jordan blocks). At present we do not have a complete analysis of the ``extension problem'' for reducible totally symmetric sets -- \Cref{lemma:4} hints at the difficulty of this.

It is interesting to note that our analysis of commutative totally symmetric sets requires us to understand non-commutative sets as well (\Cref{remark:noncom}). Historically, commutative totally symmetric sets were the first to be defined and investigated, but there are contexts in which non-commutative totally symmetric sets are an important tool in their own right. For instance, in \Cref{proposition:sn}, we use \Cref{maintheorem:max} to give a conceptual understanding of why the symmetric group does not admit low-dimensional non-abelian representations:\\

\noindent \textbf{\Cref{proposition:sn}} {\em For $n \ne 4$, the symmetric group $\Sigma_n$ admits no non-abelian representations over $\C$ of dimension $d < n-1$.}\\

Of course this fact is well-known, but the usual proof requires the full apparatus of the partition-irrep correspondence for representations of $\Sigma_n$. Farb has asked for a conceptual understanding of this, and to that end, our proof can be simply summarized as follows: {\em $\Sigma_n$ contains the $(n-1)$-element totally symmetric set $\{(1,i) \mid 2 \le i \le n\}$, which obstructs the existence of representations in dimension below $n-1$.}

\para{History and context} The basic principle underlying the utility of totally symmetric sets is the so-called {\em persistence lemma} (\Cref{lemma:rigid}; see also \Cref{remark:hom}), which asserts that if $\mathcal A \subset G$ is a totally symmetric set and $f: G \to H$ is a homomorphism, then $f(\mathcal A)$ is either a totally symmetric set of the same cardinality or else a singleton (i.e. a {\em degenerate} totally symmetric set, in the terminology of \Cref{definition:totalsym}). Thus classifying totally symmetric sets in a fixed group $G$ goes a long way towards understanding homomorphisms both in and out of $G$.

Totally symmetric sets are of particular use in the study of braid groups, where a map $f: B_n \to G$ has cyclic image if and only if the image of the totally symmetric set $X_n = \{\sigma_{2i-1}\}_{i = 1}^{\lfloor n/2 \rfloor}$ under $f$ is degenerate. This was the basic premise of \cite{Finite_Quotients}, \cite{Caplinger-Kordek} and \cite{Nancy-Yvon}, which gave superexponential bounds on the size of non-cyclic quotients of braid groups. These were established by proving that groups must be suitably large to contain a totally symmetric set of a certain size. If $G$ is too small, it cannot contain a totally symmetric set of cardinality $|X_n|$, forcing $f(X_n)$ to be degenerate for any $f:B_n \to G$. In \cite{UpperBoundsTSS}, Kordek-Li-Partin show that this is a somewhat generic phenomenon: large totally symmetric sets in finite groups are quite rare.

In \cite{Chen-Mukherjea} and \cite{Kordek-Margalit}, Chen-Mukherjea and Kordek-Margalit classify homomorphisms $B_n \to \Mod{S_g}$ and $[B_n,B_n] \to B_n$ respectively. These papers go further by classifying totally symmetric sets in their respective codomains and using this information to deduce the possible images of totally symmetric sets in the domain. Both Kordek-Margalit and Chen-Mukherjea use an auxiliary construction, {\em totally symmetric multicurves} to study totally symmetric sets in mapping class groups. We will similarly  make use of {\em totally symmetric arrangements of subspaces} to study totally symmetric sets in $\End(V)$, and will unify these various notions of total symmetry in \Cref{subsection:totalsym}.

The common theme of these papers is that
homomorphisms $f:G \to H$ can be studied by analyzing the totally symmetric sets of $G$ and $H$. Furthermore, both bounding the size of totally symmetric sets, and classifying them outright can give information about the corresponding homomorphisms. Our results fit neatly into this program, and we anticipate that they may be used to further understand representations of groups with totally symmetric sets.

\para{Representation theory of braid and mapping class groups} There is an analogy between the notion of a commutative totally symmetric set and that of a maximal torus in Lie theory---the total symmetry property is reminiscent of the action of the Weyl group. This raises the possibility of studying the representation theory of groups $G$ with large totally symmetric sets (e.g. braid and mapping class groups) in terms of {\em weights}, i.e. in terms of the totally symmetric set in $\GL_n(\C)$ obtained as the image of a chosen maximal totally symmetric set for $G$. Further progress in this direction will hinge on being able to analyze the extent to which the entire representation is determined by the weights, e.g. by incorporating more of the algebraic structure of $G$, such as the presence of braid relations in the case of braid and mapping class groups.

\para{Organization} In \Cref{section:basic}, we introduce the basics of our formulation of total symmetry. We also recall the facts from linear algebra that underlie the theory of totally symmetric sets in $\GL_n(\C)$.

The remainder of the paper is divided into two parts---the first half is concerned with a classification of irreducible commutative totally symmetric sets, culminating in the proof of \Cref{mainthm:summary} as \Cref{corollary:partition} of \Cref{mainthm:irred}, and the second half is concerned with classifying totally symmetric sets (commutative and otherwise) of maximal cardinality, as presented in \Cref{theorem:sizebound} and \Cref{maintheorem:max}.

In \Cref{section:irred}, we introduce the notion of {\em irreducibility} of a totally symmetric set in $\GL_n(\C)$ and discuss some constructions arising from this. In \Cref{section:induction}, we discuss the {\em induction construction} that lies at the heart of our analysis of irreducible totally symmetric sets, and illustrate this with some examples. The proof of \Cref{mainthm:summary} is carried out in \Cref{section:proofA}; this relies on the notion of {\em depth}, which we discuss first.

The second half begins in \Cref{section:arrangement} with a discussion of totally symmetric arrangements of vector subspaces, which is the basic tool by which we study both reducible and non-commuting totally symmetric sets in $\End(\C^n)$. \Cref{theorem:sizebound} and \Cref{maintheorem:max} are then proved simultaneously by induction, along with a third statement (\Cref{theorem:tsabound}) that bounds the size of a totally symmetric subspace arrangement. These results are closely intertwined: there are procedures for constructing a totally symmetric set from a totally symmetric arrangement and vice versa, necessitating an induction that treats both at once. We outline our inductive hypotheses and establish the base cases in \Cref{section:indstart}. Sections \Cref{section:sb,section:sc,section:csc,section:abac} then treat the various inductive hypotheses. Finally in \Cref{section:reptheory}, we apply our results to the representation theory of $\Sigma_n$, proving \Cref{proposition:sn}. 

In \Cref{appendix:s5}, we give a detailed study of the structure of the special $\tilde \Sigma_5$ arrangement appearing in the text.

\para{Acknowledgements} The first author would like to thank Dan Minahan and Wade Bloomquist for their interest in the project and helpful conversations. Both authors would like to acknowledge Benson Farb and Dan Margalit for interest in the project and for comments on a preliminary draft.

\section{Basic notions}\label{section:basic}
In \Cref{subsection:totalsym}, we present a notion of ``total symmetry'' in the context of arbitrary group actions. In \Cref{subsection:gln} we specialize to the case of interest, totally symmetric sets in $\GL_n(\C)$, and establish some fundamental properties in this context.

\subsection{Total symmetry}\label{subsection:totalsym}

Totally symmetric sets were introduced by Kordek--Margalit in \cite{Kordek-Margalit}. Already in this paper, other ``total symmetry'' properties for sets in more general $G$-sets (not just groups acting on themselves by conjugation) appeared. The formulation of ``total symmetry'' we offer here allows us to unify these properties as instances of a single definition.

We must establish some notation. Given a finite subset $S$, we write $\Sym(S)$ for the group of permutations of $S$. We write $[n]$ for the set $\{1, \dots, n\}$; 
in this case we write $\Sigma_n$ in place of $\Sym([n])$.

\begin{definition}[Total symmetry]
\label{definition:totalsym}
Let $G$ be a group acting on a set $X$, given by a homomorphism $\alpha: G \to \Sym(X)$. A subset $S \subset X$ is {\em totally symmetric} if $S$ is endowed with a transitive $\Sigma_k$-action
\[
\beta: \Sigma_k \to \Sym(S)
\]
for some $k$, and a {\em set map} (see \Cref{remark:setmap})
\[
\rho: \Sigma_k \to G
\]
called the {\em realization map} such that $\alpha (\rho(\sigma)) \in \Sym(X)$ preserves $S$ for all $\sigma \in \Sigma_k$ and for which, as automorphisms of $S$,
\begin{equation}\label{equation:abr}
\alpha(\rho (\sigma)) = \beta(\sigma).
\end{equation}

A totally symmetric set $S$ is {\em degenerate} if $S$ is a singleton.
\end{definition}

\begin{remark}[Some general comments]\label{remark:general}\ 
\begin{enumerate}
    \item Colloquially, a totally symmetric set $S$ is one for which any {\em internal} symmetry of $S$ (as mediated by $\beta$) can be realized {\em externally} by the ``ambient'' group action of $G$ on $X$ under the realization map $\rho: \Sigma_n \to G$. 
    \item The realization map is thought of as {\em secondary data} attached to a totally symmetric set; in practice there will frequently be many choices for realization maps which all realize \eqref{equation:abr}, and we do not wish to privilege any particular one.
    \item There is no theoretical reason to restrict $S$ to carrying an action of $\Sigma_n$ as opposed to some more general group, but the examples in the literature are all of this form and we do not wish to needlessly overburden the notation.
\end{enumerate}
\end{remark}

\begin{example}[(Commuting) totally symmetric sets in groups]
\label{example:commtss}
The primary example we will be concerned with in this paper is as follows: $G$ will be a group, and the $G$-set $X$ will be $G$ acting on itself by conjugation. A totally symmetric set in this context is a set
\[
\mathcal A = \{A_1,\dots,A_k\} \subset G
\]
such that any permutation $\sigma \in \Sym(\mathcal A) \cong \Sigma_k$ can be realized by conjugation by some element $\rho(\sigma) \in G$.

Frequently one adds the hypothesis that the elements of $\mathcal A$ pairwise commute (in the older literature, this is part of the definition of a ``totally symmetric set''). Under this additional hypothesis, we say that $\mathcal A$ is a {\em commutative totally symmetric set}. 
\end{example}


\begin{example}
We will see that an understanding of commuting totally symmetric sets in $G = \GL(V)$ actually requires an understanding of (possibly non-commuting) totally symmetric sets in the endomorphism algebra $\End(V)$; the action of $\GL(V)$ on itself by conjugation extends to an action on $\End(V)$. Thinking about the associated eigenspaces will lead us to the other main class of totally symmetric set studied in the paper: totally symmetric arrangements of linear subspaces. This is a collection of subspaces $\mathcal{W} = \{W_1,\ldots, W_k\}$ of $V$ so that for every every $\sigma \in \Sigma_k$, there is a $\rho(\sigma) \in \GL{V}$ satisfying $\rho(\sigma)\cdot W_i = W_{\sigma(i)}$.

\end{example}


\begin{example}
A central example that we will {\em not} explore in this paper arises in the theory of mapping class groups. Let $\Sigma$ be a surface and $\Mod(\Sigma)$ the associated mapping class group. $\Mod(\Sigma)$ acts on the set $X$ of isotopy classes of simple closed curves in $\Sigma$; a {\em totally symmetric set of curves} is a totally symmetric set $S \{c_1,\ldots, c_k\} \subset X$ with respect to this action. That is, any permutation of $S$ can be realized by an element of $\Mod(S)$. Totally symmetric sets of curves play a crucial role in understanding totally symmetric sets in the mapping class group itself; any such set of curves can be promoted to a totally symmetric set of elements of $\Mod(\Sigma)$ by assigning each curve to its Dehn twist. This is philosophically similar to the relationship between linear endomorphisms and their eigenspaces.
\end{example}

\begin{remark}[Why set maps?]
\label{remark:setmap}
There is an apparently unusual feature of \Cref{definition:totalsym}: while the group actions $\alpha, \beta$ are required to be maps of groups, the realization map $\rho$ is only required to be a map of sets. We formulate the definition the way that we do because $\rho$ admits a natural description as a {\em section} of a certain group surjection, and such sections are {\em a priori} only maps of sets. To see this, we let $\Stab(S) \le G$ denote the stabilizer of $S$ under the action $\alpha$, and consider the following diagram:
\[
\xymatrix{
                & \Stab(S) \ar[d]^\alpha\\
\Sigma_k \ar@{.>}[ur]^{\rho} \ar[r]_-\beta    & \Sym(S).    
}
\]
The total symmetry condition of \Cref{definition:totalsym} is seen to be equivalent to the existence of a section $\rho$ of $\alpha$ over the subgroup $\beta(\Sigma_k) \le \Sym(S)$.

Moreover, in many of the motivating examples, $\rho$ in fact {\em cannot} be promoted to a group homomorphism. An obvious obstruction for this is if $G$ is torsion-free, e.g. in the case of $G = B_n$ the braid group, the context from which the theory of total symmetry arose.
\end{remark}

Totally symmetric sets obey a fundamental {\em rigidity} property under maps of the ambient $G$-sets, as described in \Cref{lemma:rigid} below. To formulate this, for $i = 1,2$, let $\alpha_i: G_i \to \Sym(X_i)$ give the sets $X_i$ the structure of $G_i$-sets, and let $\phi: G_1 \to G_2$ be a homomorphism. We say that $f: X_1 \to X_2$ is {\em $\phi$-intertwining} if for all $g \in G_1$,
\[
f \circ \alpha_1(g) = \alpha_2(\phi(g)) \circ f.
\]
In the case $G_1 = G_2 = G$ and $\phi = \id$ we simply say that $f$ is a {\em map of $G$-sets}. With this formulated, we can state the rigidity property for totally symmetric sets: they {\em persist} under intertwining maps of the ambient $G$-sets.
\begin{lemma}[Persistence]\label{lemma:rigid}
For $i = 1,2$, let $X_i$ be a $G_i$-set, and let $\phi: G_1 \to G_2$ be a homomorphism. Let $S_1 \subset X_1$ be totally symmetric, and let $f: X_1 \to X_2$ be $\phi$-intertwining. Then $S_2:= f(S_1)$ is totally symmetric, and $f$ restricts to a map $f: S_1 \to S_2$ of $\Sigma_n$-sets.
\end{lemma}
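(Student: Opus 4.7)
The plan is to transport the totally symmetric structure $(\beta_1, \rho_1)$ from $S_1$ across $f$ and $\phi$. The natural candidate for the realization map on $S_2$ is the composition $\rho_2 \coloneq \phi \circ \rho_1 : \Sigma_k \to G_2$, and the natural candidate for the $\Sigma_k$-action is the unique map $\beta_2 : \Sigma_k \to \Sym(S_2)$ making $f : S_1 \to S_2$ equivariant, i.e., defined on $y = f(x) \in S_2$ by $\beta_2(\sigma)(f(x)) \coloneq f(\beta_1(\sigma)(x))$. If I can show this $\beta_2$ is well-defined and that $\alpha_2(\rho_2(\sigma))$ preserves $S_2$ and restricts to $\beta_2(\sigma)$ there, everything else will follow formally.

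The one real obstacle is well-definedness of $\beta_2$: if $f(x) = f(x')$ for distinct $x, x' \in S_1$, I must verify that $f(\beta_1(\sigma)(x)) = f(\beta_1(\sigma)(x'))$. For this I would use the total symmetry of $S_1$ to rewrite $\beta_1(\sigma)(x) = \alpha_1(\rho_1(\sigma))(x)$, then apply the intertwining hypothesis to compute
\[
f(\beta_1(\sigma)(x)) = f(\alpha_1(\rho_1(\sigma))(x)) = \alpha_2(\phi(\rho_1(\sigma)))(f(x)) = \alpha_2(\rho_2(\sigma))(f(x)).
\]
Since the right-hand side depends only on $f(x)$, well-definedness is immediate. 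The same computation simultaneously shows that $\alpha_2(\rho_2(\sigma))$ carries $S_2 = f(S_1)$ into itself and agrees with $\beta_2(\sigma)$ there, which is exactly the axiom \eqref{equation:abr} for $(S_2, \beta_2, \rho_2)$.

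With $\beta_2$ well-defined, the remaining verifications are routine. The map $\beta_2$ is a group homomorphism because $\beta_1$ is one and $f : S_1 \to S_2$ is surjective: for any $y = f(x) \in S_2$,
\[
\beta_2(\sigma \tau)(y) = f(\beta_1(\sigma)\beta_1(\tau)(x)) = \beta_2(\sigma)(f(\beta_1(\tau)(x))) = \beta_2(\sigma)\beta_2(\tau)(y).
\]
Each $\beta_2(\sigma)$ is then a bijection with inverse $\beta_2(\sigma^{-1})$, and transitivity of $\beta_2$ on $S_2$ is inherited from that of $\beta_1$ on $S_1$ via the surjection $f$. Finally, the defining relation $\beta_2(\sigma) \circ f = f \circ \beta_1(\sigma)$ is exactly the assertion that $f : S_1 \to S_2$ is a map of $\Sigma_k$-sets, completing the proof.
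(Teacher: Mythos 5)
Your proposal is correct and takes essentially the same approach as the paper: both define $\rho_2 = \phi \circ \rho_1$ and $\beta_2(\sigma)(f(x)) = f(\beta_1(\sigma)(x))$, and both establish well-definedness by rewriting $\beta_1(\sigma)(x)$ as $\alpha_1(\rho_1(\sigma))(x)$ via total symmetry and then pushing through the intertwining condition. Your write-up spells out the routine checks (that $\beta_2$ is a transitive homomorphism) that the paper compresses into ``is also immediate.''
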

\begin{proof}
Let $\alpha_i: G_i \to \Sym(X_i)$ and $\beta_1: \Sigma_n \to \Sym(S_1)$ be the structure maps for the group actions; let $\rho_1: \Sigma_n \to G_1$ be the realization map for the totally symmetric set $S_1$. We must define a group action $\beta_2: \Sigma_n \to \Sym(S_2)$ and a realization map $\rho_2: \Sigma_n \to G_2$.

For $y \in S_2$ and $\sigma \in \Sigma_n$, we define 
\[
\beta_2(\sigma)(y) = f(\beta_1(\sigma)(x)),
\]
where $x \in S_1$ satisfies $f(x) = y$. We must check that this is well-defined. By total symmetry,
\[
\beta_1(\sigma)(x) = \alpha_1(\rho_1(\sigma))(x),
\]
so that
\[
f(\beta_1(\sigma)(x)) = f(\alpha_1(\rho_1(\sigma))(x)) = \alpha_2(\phi(\rho_1(\sigma)))(f(x)) = \alpha_2(\phi(\rho_1(\sigma)))(y).
\]
This visibly depends only on $y \in S_2$, showing well-definedness. This also shows that taking $\rho_2 = \phi \circ \rho_1$ endows $S_2$ with the structure of a totally symmetric set; that $f$ restricts to a map of $\Sigma_n$-sets $f: S_1 \to S_2$ is also immediate.
\end{proof}

\begin{corollary}\label{corollary:permrigid}
In the setting of \Cref{lemma:rigid}, suppose further that $S_1$ is isomorphic as a $\Sigma_n$-set to $[n]$ under the permutation action of $\Sigma_n$. Then either $f$ restricts to an isomorphism $S_1 \cong S_2$ of $\Sigma_n$-sets, or else $S_2$ is degenerate (i.e. a singleton carrying the trivial $\Sigma_n$-action). 
\end{corollary}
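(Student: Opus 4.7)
The approach is to invoke $2$-transitivity of $\Sigma_n$ on $[n]$ (for $n \ge 2$; the case $n = 1$ is vacuous). First I would apply \Cref{lemma:rigid} to obtain a $\Sigma_n$-equivariant surjection $f: S_1 \to S_2$. Identifying $S_1$ with $[n]$ via the hypothesized isomorphism of $\Sigma_n$-sets, the argument splits on whether this restriction is injective.

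If $f$ is injective on $S_1$, it is automatically a bijection (surjectivity being built into the definition of $S_2 = f(S_1)$), and equivariance then upgrades it to an isomorphism of $\Sigma_n$-sets, giving the first alternative. Otherwise I would choose distinct $i, j \in [n]$ with $f(i) = f(j)$ and use $2$-transitivity to conclude that $f$ is globally constant: for arbitrary distinct $k, l \in [n]$, pick $\sigma \in \Sigma_n$ with $\sigma(i) = k$ and $\sigma(j) = l$, and compute
\[
f(k) = f(\sigma \cdot i) = \sigma \cdot f(i) = \sigma \cdot f(j) = f(\sigma \cdot j) = f(l).
\]
This forces $S_2 = f([n])$ to be a singleton, which perforce carries the trivial $\Sigma_n$-action and is therefore degenerate in the sense of \Cref{definition:totalsym}.

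There is no substantive obstacle---the statement really is a direct reflection of $2$-transitivity, which is the group-theoretic content of the hypothesis that $S_1$ is the standard permutation $\Sigma_n$-set. The only care needed is the brief bookkeeping to see that the $\Sigma_n$-action on $S_2$ produced by \Cref{lemma:rigid} is indeed the pushforward of the action on $S_1$ along $f$, so that $f$ really is equivariant in the sense used in the display above.
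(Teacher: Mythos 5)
Your proof is correct and is essentially the same argument as in the paper: both reduce to the observation that a collision $f(i) = f(j)$ propagates to a global collapse via $\Sigma_n$-equivariance, the paper by moving the collision through a permutation fixing one of the colliding points, and you by invoking $2$-transitivity directly. Your framing via $2$-transitivity makes the underlying group-theoretic content a bit more explicit, but the two arguments are interchangeable.
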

\begin{proof}
From \Cref{lemma:rigid}, $f$ induces a surjective map of $\Sigma_n$-sets $f: S_1 \to S_2$. If $S_1 = [n]$, it is easy to see that either $f$ is an isomorphism or else $S_2$ is a singleton: if $f(x_1) = f(x_2)$ for any $x_1 \ne x_2 \in S_1$, then for any $x_3 \ne x_1, x_2 \in S_1$, let $\sigma \in \Sigma_n$ satisfy $\sigma(x_2) = x_3$ and $\sigma(x_1) = x_1$. Then
\[
f(\sigma \cdot x_2) = f(x_3), 
\]
while 
\[
\sigma \cdot f(x_2) = \sigma \cdot f(x_1) = f(\sigma \cdot x_1) = f(x_1),
\]
showing $S_2$ is the singleton $f(x_1)$. 
\end{proof}

\begin{remark}[A standing simplifying assumption]
In the remainder of the paper, unless explicitly stated otherwise, we restrict attention to the setting of \Cref{corollary:permrigid}, i.e. where $S$ is isomorphic to $[n]$ as a $\Sigma_n$-set. In other contexts (e.g. the ``totally symmetric multicurves'' appearing in \cite{Chen-Mukherjea} and \cite{Kordek-Margalit}), $\beta$ and $S$ can be more complicated, e.g. $S$ can be the set of $k$-element subsets of $[n]$, but we will not need to pursue this further here.
\end{remark}

\begin{remark}[Collision implies collapse]\label{remark:cimpc}
A consequence of \Cref{corollary:permrigid} that we will frequently use is the following: {\em if $f: S_1 \to S_2$ is a map of totally symmetric sets, and if $f(s_1) = f(s_2)$ for any distinct elements $s_1, s_2 \in S$, then $f(S_1)$ is a singleton, i.e. a degenerate totally symmetric set.}
\end{remark}

\begin{remark}\label{remark:hom}
In the setting where $X_1$ and $X_2$ are groups $G_1,G_2$ acting on themselves via conjugation, and the map $f: X_1 \to X_2$ is given by a homomorphism $\phi$, \Cref{lemma:rigid} simply asserts that the image of a totally symmetric set under a homomorphism is again totally symmetric. Combining this observation with \Cref{remark:cimpc} yields \cite[Lemma 2.1]{Kordek-Margalit}.
\end{remark}

\subsection{Totally symmetric sets of endomorphisms}\label{subsection:gln}
For the remainder of the paper, we specialize to the following setting: $V$ will denote a finite-dimensional vector space over $\C$, $\GL(V)$ will denote the associated general linear group, and $\End(V)$ will denote the algebra of $\C$-linear endomorphisms. Note that $\End(V)$ is a $\GL(V)$-set under conjugation.

Our objective in this section is to recall the basic structure theory for sets of commuting linear endomorphisms and to apply this in order to establish some basic facts about totally symmetric sets in $\End(V)$. 

\para{Generalized eigenspaces} Let $A \in \End(V)$ be an endomorphism. Recall that the {\em generalized $\lambda$ eigenspace of degree $c$} is the kernel of $(A- \lambda I)^c$; we write this $E(A)_{\lambda,c}$, or simply $E_{\lambda,c}$ when $A$ is clear from context. In the case $c = 1$, we will often drop the subscript $1$ and write simply $E_\lambda$; we will also drop ``generalized'' from the terminology.

For increasing $c$, the spaces $E_{\lambda,c}$ form an increasing filtration
\[
E_{\lambda,0} = \{0\} \leq E_{\lambda,1} \leq \dots \leq E_{\lambda,d} = E_{\lambda, d+1} = \dots
\]
that stabilizes for some $d \ge 0$. If $A$ and $B$ are conjugate, then the conjugating map induces an isomorphism $E(A)_{\lambda, c} \cong E(B)_{\lambda,c}$ for all eigenvalues $\lambda$ and degrees $c$. Clearly $A - \lambda I$ induces maps
\[
(A- \lambda I): E_{\lambda,c} \to E_{\lambda, c-1}
\]
with kernel $E_{\lambda, 1}$ for $c \ge 1$, and it is easy to see that  $(A-\lambda I)$ induces an {\em injection} (again for $c \ge 1$)
\[
(A-\lambda I): E_{\lambda, c+1}/E_{\lambda, c} \into E_{\lambda, c}/E_{\lambda, c-1}.
\]
Considering the dimensions of these spaces, we obtain the {\em Jordan inequalities}
\begin{equation}\label{equation:jordan}
    \dim(E_{\lambda,c}/E_{\lambda, c-1}) \ge \dim(E_{\lambda, c+1} / E_{\lambda, c})
    \end{equation}
    of dimensions of generalized eigenspaces, which leads to the corollary
    \begin{equation}\label{equation:jordan2}
    \dim(E_{\lambda,c}) \ge c \dim(E_{\lambda,c}/E_{\lambda,c-1}),
    \end{equation}
valid for all $c \ge 1$.

\para{Generalized eigenspaces of totally symmetric sets} Let $\mathcal A = \{A_1, \dots, A_k\} \subset \End(V)$ be a totally symmetric set. For a fixed $i \in [k]$, we write 
\[
E_{\lambda,c}^i \subset V
\]
to denote the degree-$c$ generalized $\lambda$ eigenspace of $A_i$. Since $A_i$ and $A_j$ are conjugate, the spaces $E_{\lambda,c}^i$ and $E_{\lambda,c}^j$ are isomorphic for all pairs of indices $i,j$. Thus we speak of $\lambda$ as being an eigenvalue of $\mathcal A$, not merely of an individual $A_i \in \mathcal A$. When $\mathcal A$ is commutative, each generalized eigenspace $E_{\lambda,c}^i$ is invariant under each $A_j \in \mathcal A$.\\

The $j$-fold eigenspaces introduced below will play a central role in what follows, and \Cref{lemma:conjformula}, while elementary, lies at the heart of our analysis of totally symmetric sets in $\End(V)$.

\begin{definition}[$j$-fold eigenspace]
\label{definition:pfold}
Let $\mathcal A \subset \End(V)$ be a totally symmetric set of cardinality $k$. Fix $j \le k$, and let $\lambda$ be an eigenvalue of $\mathcal A$. Let $S \subset [k]$ be a subset of cardinality $j$. We define the {\em $j$-fold generalized eigenspace of degree $c$} associated to $S \subset [k]$ as the intersection
\[
E_{\lambda,c}^S := \bigcap_{i \in S}E_{\lambda,c}^i.
\]
As above, in the case $c = 1$ we will frequently drop $c$ from the notation.
\end{definition}

For $V$ a finite-dimensional vector space, let $\Gr(V)$ denote the union of the Grassmannians $\Gr_d(V)$ for $d \ge 0$. Observe that $\Gr(V)$ is a $\GL(V)$-set. For $\lambda \in \C$ and a positive integer $c$, there is a map of $\GL(V)$-sets
\[
E(\,\cdot\,)_{\lambda,c}: \End(V) \to \Gr(V)
\]
which takes $A \in \End(V)$ to $E(A)_{\lambda,c} \in \Gr(V)$. (We remark that when $\End(V)$ and $\Gr(V)$ are endowed with their usual topologies, this is of course discontinuous, but this will be irrelevant for our purposes). \Cref{lemma:conjformula}, while nothing more than a formulation of a basic linear-algebraic principle in our language, will be essential in what follows.

\begin{lemma}
\label{lemma:conjformula}
Let $\mathcal A = \{A_1, \dots, A_k\} \subset \End(V)$ be a totally symmetric set with realization map $\rho: \Sigma_k \to \GL(V)$. Let $\lambda$ be an eigenvalue of $\mathcal A$, let $S \subset [k]$ be a subset, and let $\sigma \in \Sigma_k$ be arbitrary. Then $\rho(\sigma)$ induces isomorphisms
\[
\rho(\sigma): E_{\lambda,c}^{S} \to E_{\lambda,c}^{\sigma(S)}
\]
for all subsets $S \subset [k]$ and all degrees $c \ge 0$. 
\end{lemma}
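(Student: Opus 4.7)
The statement is really a formal consequence of the defining property of total symmetry together with the fact that the generalized eigenspace functor $E(\,\cdot\,)_{\lambda,c}$ is a map of $\GL(V)$-sets, which is already noted just before the lemma. The plan is to verify both of these points explicitly and assemble them.

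First, I would unpack the total symmetry hypothesis in this setting. Since $\mathcal A \subset \End(V)$ is totally symmetric via $\rho$, and the action $\alpha$ of $\GL(V)$ on $\End(V)$ is by conjugation, the identity $\alpha(\rho(\sigma)) = \beta(\sigma)$ restricted to the set $\mathcal A$ reads
\[
\rho(\sigma)\, A_i\, \rho(\sigma)^{-1} = A_{\sigma(i)} \qquad \text{for all } i \in [k].
\]
From this one obtains
\[
\rho(\sigma)\, (A_i - \lambda I)^c = (A_{\sigma(i)} - \lambda I)^c\, \rho(\sigma),
\]
simply by expanding and conjugating each power.

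Next, I would show the inclusion $\rho(\sigma)\, E_{\lambda,c}^i \subseteq E_{\lambda,c}^{\sigma(i)}$. If $v \in E_{\lambda,c}^i$, then $(A_i - \lambda I)^c v = 0$, and the displayed identity gives $(A_{\sigma(i)} - \lambda I)^c\, \rho(\sigma) v = 0$, so $\rho(\sigma)v \in E_{\lambda,c}^{\sigma(i)}$. Intersecting over $i \in S$ yields
\[
\rho(\sigma)\, E_{\lambda,c}^S \subseteq \bigcap_{i \in S} E_{\lambda,c}^{\sigma(i)} = E_{\lambda,c}^{\sigma(S)}.
\]

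Finally, to promote this inclusion to an isomorphism I would use $\rho(\sigma)^{-1} \in \GL(V)$ (which exists regardless of the fact that $\rho$ is only a set map). Conjugation by $\rho(\sigma)^{-1}$ sends $A_j$ to $A_{\sigma^{-1}(j)}$, so the same argument applied to $\rho(\sigma)^{-1}$ in place of $\rho(\sigma)$ and to the subset $\sigma(S)$ in place of $S$ yields
\[
\rho(\sigma)^{-1}\, E_{\lambda,c}^{\sigma(S)} \subseteq E_{\lambda,c}^S.
\]
Combining the two inclusions shows that $\rho(\sigma)$ restricts to a linear bijection $E_{\lambda,c}^S \to E_{\lambda,c}^{\sigma(S)}$, proving the lemma. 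The only point that requires attention, and the one I expect to trip up a careless reading, is that $\rho$ is merely a set map, so one must argue the reverse inclusion using the group inverse $\rho(\sigma)^{-1}$ rather than $\rho(\sigma^{-1})$; once this is noted, the proof is completely mechanical.
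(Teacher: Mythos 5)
Your proof is correct, and it fills in a proof that the paper omits (the paper simply declares the lemma ``nothing more than a formulation of a basic linear-algebraic principle'' and gestures at the argument by noting that $E(\,\cdot\,)_{\lambda,c}\colon \End(V) \to \Gr(V)$ is a map of $\GL(V)$-sets). Your care in using the group inverse $\rho(\sigma)^{-1}$ rather than $\rho(\sigma^{-1})$ to get the reverse inclusion is exactly the right point to highlight, consistent with the paper's own warning in \Cref{remark:setmap} that $\rho$ is only a set map.
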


\begin{remark}
In other words, $j$-fold eigenspaces form a totally symmetric set (a ``totally symmetric arrangement'', in the language of \Cref{section:arrangement}), where the ambient action is $\GL(V)$ acting on some $\Gr_d(V)$ and the $\Sigma_k$-action is on $k$-element subsets of $[n]$.
\end{remark}



\begin{remark}
The reader familiar with the theory of totally symmetric multicurves as in \cite{Kordek-Margalit} and \cite{Chen-Mukherjea} may note the structural similarity between this and the notion of $j$-fold eigenspaces.
\end{remark}

\para{Isomorphism} In the course of our analysis, we will have occasion to consider totally symmetric sets acting on isomorphic vector spaces, necessitating a notion of isomorphism of totally symmetric sets. 
\begin{definition}[Isomorphism of totally symmetric sets]
\label{definition:iso}
Let $\mathcal A = \{A_1, \dots A_k\} \subset \End(V)$ and $\mathcal B = \{B_1, \dots B_k\} \subset \End(W)$ be totally symmetric sets. An isomorphism $\mathcal A \cong \mathcal B$ of totally symmetric sets is an isomorphism $T: V \to W$ such that the equation
\[
B_i = T A_i T^{-1}
\]
holds for all $i \in [k]$.
\end{definition}
\begin{remark}
If $T$ is an isomorphism of totally symmetric sets, then conjugation by $T$ will induce a bijection between the set of realization maps for $\mathcal A$ and the set of those for $\mathcal B$. In keeping with the principle that the realization map is of secondary importance, we do not impose any requirements that $T$ conjugate a {\em specific} realization map for $\mathcal A$ into one for $\mathcal B$.
\end{remark}

\section{Irreducibility}\label{section:irred}

We come now to the central new definition in our analysis of totally symmetric sets in $\End(V)$, that of {\em (ir)reducibility} and the corresponding {\em restriction} and {\em quotient} constructions (\Cref{definition:irreducible}, \Cref{construction:restriction}). As the name suggests, irreducibility is motivated by the analogous concept in representation theory. \Cref{mainthm:irred} will demonstrate that the irreducible totally symmetric sets of a fixed cardinality are extremely rigid. Unlike in the representation theory of finite groups, general totally symmetric sets are not ``semisimple'' (i.e. direct sums of irreducibles), and the analysis of a reducible totally symmetric set involves a consideration of an ``extension problem'', essentially an analysis of nontrivial Jordan blocks.

\begin{definition}[(Ir)reducibility]
\label{definition:irreducible}
Let $\mathcal A \subset \End(V)$ be a totally symmetric set. $\mathcal A$ is said to be {\em reducible} if there is a proper subspace $W \le V$ (i.e. both dimension and codimension are positive) which is invariant under $\mathcal A$ and under the set of transformations $\{\rho(\sigma) \mid \sigma \in \Sigma_k\}$ for $\rho$ some realization map for $\mathcal A$. Such a subspace is said to be {\em $(\mathcal A, \rho)$ invariant}, for short. If no such $W$ exists, then $\mathcal A$ is said to be {\em irreducible}.
\end{definition}

\begin{construction}[Restrictions and quotients]\label{construction:restriction}
Let $\mathcal A \subset \End(V)$ be a totally symmetric set. Suppose that $W \le V$ is an $(\mathcal A, \rho)$-invariant subspace. Then the restrictions $A_i |_W$ form a totally symmetric set in $\End(W)$ with realization map given by the restrictions of $\rho(\sigma)$ to $W$. We call this the {\em restriction of $\mathcal A$ to $W$}, written $\mathcal A |_W$.

Likewise, the set of $A_i \in \mathcal A$ descend to maps $A_i/W \in \End(V/W)$ forming a totally symmetric set which we call the {\em quotient}, written $\mathcal A / W$.  With respect to a well-chosen basis, we can write 
\[
A_i = \begin{pmatrix} A_i|_W & X_i \\ 0 & A_i/W \end{pmatrix}.
\]
\end{construction}

\begin{remark}
If a totally symmetric set $\mathcal{A} \subset \End (V)$ leaves $W$ invariant but does not admit a realization map $\rho$ that leaves $W$ invariant, the restriction and quotient of $\mathcal{A}$ need not be totally symmetric (e.g. \Cref{example:simplexconst}).
\end{remark}

\begin{remark}
The restriction and/or quotient of a nondegenerate totally symmetric set may be degenerate (e.g. \Cref{example:simplexconst}).
\end{remark}

As a first consequence of the notion of irreducibility, we see that nondiagonalizability is a quintessentially {\em reducible} phenomenon.
\begin{proposition}\label{prop:diagonal}
Let $\mathcal A = \{A_1, \dots, A_k\} \subset \End(V)$ be an irreducible commutative totally symmetric set. Then $\mathcal A$ is diagonalizable---i.e. the set $\{A_1, \dots, A_k\}$ is simultaneously diagonalizable.
\end{proposition}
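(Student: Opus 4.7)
The plan is to exploit the commutativity hypothesis to decompose $V$ into simultaneous generalized eigenspaces of $\mathcal A$, and then invoke irreducibility to collapse each summand onto its honest simultaneous eigenspace.

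Because $A_1,\ldots,A_k$ pairwise commute, standard linear algebra yields a direct-sum decomposition
\[
V = \bigoplus_{\vec \mu} V_{\vec \mu}, \qquad V_{\vec \mu} := \bigcap_{i=1}^k E^i_{\mu_i, c},
\]
indexed by the tuples $\vec \mu = (\mu_1,\ldots,\mu_k) \in \C^k$ for which $V_{\vec \mu} \ne 0$ (taking $c$ large enough that every filtration in the intersection has stabilized). On each $V_{\vec \mu}$ the operator $A_i$ acts as $\mu_i I + N_i$ with the $N_i$ pairwise commuting nilpotent operators. By \Cref{lemma:conjformula}, $\rho(\sigma)$ carries $E^i_{\mu_i,c}$ to $E^{\sigma(i)}_{\mu_i,c}$, so $\rho(\sigma)$ permutes the summands by $V_{\vec \mu} \mapsto V_{\sigma \cdot \vec \mu}$, where $(\sigma \cdot \vec \mu)_j := \mu_{\sigma^{-1}(j)}$.

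Next I consider the ``simultaneously honest'' subspace
\[
K := \bigoplus_{\vec \mu\,:\, V_{\vec \mu} \ne 0} V_{\vec \mu}^{(1)}, \qquad V_{\vec \mu}^{(1)} := \bigcap_{i=1}^k E^i_{\mu_i,1} \subseteq V_{\vec \mu}.
\]
The classical fact that a family of pairwise commuting nilpotents on a nonzero space has a nonzero common kernel (by induction on $k$: $\ker N_1 \ne 0$ is preserved by each $N_j$, on which $N_j$ remains nilpotent) gives $V_{\vec \mu}^{(1)} \ne 0$, hence $K \ne 0$. Moreover $K$ is $(\mathcal A,\rho)$-invariant: each $A_j$ commutes with each $A_i$ and so preserves every $E^i_{\mu_i,1}$, and therefore preserves $V_{\vec \mu}^{(1)}$; meanwhile the permutation formula above shows $\rho(\sigma)$ merely shuffles the summands of $K$.

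By \Cref{definition:irreducible}, $K$ is $\{0\}$ or $V$, so necessarily $K = V$. Since $V = \bigoplus V_{\vec \mu}$ and $V_{\vec \mu}^{(1)} \subseteq V_{\vec \mu}$, this forces $V_{\vec \mu}^{(1)} = V_{\vec \mu}$ for every multi-eigenvalue $\vec \mu$. Equivalently, each $N_i$ vanishes on each $V_{\vec \mu}$, and so $A_i$ acts as the scalar $\mu_i$ on $V_{\vec \mu}$; any basis adapted to the $V_{\vec \mu}$-decomposition is then a simultaneous eigenbasis for $\mathcal A$. The main obstacle I expect is verifying the $(\mathcal A,\rho)$-invariance of $K$, which requires both commutativity (for the $A_j$-invariance of each summand) and \Cref{lemma:conjformula} (for $\rho$ to merely permute summands); once these are in hand, irreducibility finishes the proof.
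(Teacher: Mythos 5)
Your proof is correct and follows essentially the same route as the paper: both identify the span of the simultaneous eigenvectors (your $K$, the paper's $E$) as a nonzero subspace, use commutativity and \Cref{lemma:conjformula} to see it is $(\mathcal A,\rho)$-invariant, and then invoke irreducibility to force it to equal $V$. Your version just spells out the generalized-eigenspace decomposition and the nilpotent parts more explicitly than the paper's terser argument.
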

\begin{proof}
Since commuting linear endomorphisms preserve each other's eigenspaces, and since every linear endomorphism over $\C$ admits an eigenvector, it follows that the set of simultaneous eigenvectors for $\{A_1, \dots, A_k\}$ is nonempty. Let $E \le V$ be the span of such vectors. Note that the restriction to $E$ of any $A_i \in \mathcal A$ is diagonalizable. Certainly $E$ is $\mathcal A$-invariant. \Cref{lemma:conjformula} shows that $E$ is invariant under the action of all $\rho(\sigma)$ for any realization map $\rho$, and thus $\mathcal A$ admits a restriction to $E$ in the sense of \Cref{construction:restriction}. Since $\mathcal A$ is assumed to be irreducible, it follows that $E = V$, and hence each $\mathcal A$ is diagonalizable as claimed.
\end{proof}

\section{The induction construction}\label{section:induction}

Here we formulate a construction that lies at the heart of the classification scheme for irreducible totally symmetric sets, that of {\em induction}. Like its namesake in representation theory, the induction construction will allow us to expand the size of the totally symmetric set at the cost of enlarging the dimension of the vector space on which it acts. Following the description of induction given in \Cref{induction}, we illustrate it by presenting two examples of totally symmetric sets: the {\em standard construction} of \Cref{example:standard} and the {\em permutation construction} of \Cref{example:perm}. Ultimately we will see that the standard construction is the unique irreducible commutative totally symmetric set of $n$ elements in $\End(\C^n)$ (\Cref{maintheorem:max}).

\begin{convention}\label{convention:commute}
For the duration of \Cref{section:induction} and \Cref{section:proofA}, all totally symmetric sets $\mathcal A \subset \End(V)$ are assumed to be {\em commutative}.
\end{convention}

\begin{construction}[Induction]
\label{induction}
Let $\mathcal A \subset \End(V)$ be a totally symmetric set of cardinality $k$. Let $\rho: \Sigma_k \to \GL(V)$ be a realization map for $\mathcal A$, i.e. a set map such that 
\begin{equation}\label{equation:rho}
A_{\tau(i)}(v) = \rho(\tau) A_i\rho(\tau)^{-1}(v)
\end{equation}
holds for $i \in [k], v \in V$, and $\tau \in \Sigma_k$. Let $p \ge 1$ be an integer, and let $\lambda \in \C$ be arbitrary. Let $\subsetsof{k+p}{p}$ denote the set of $p$-element subsets of $[k+p]$, and let $\C[\subsetsof{k+p}{p}]$ denote the vector space with basis in bijection with $\subsetsof{k+p}{p}$.

The {\em induction of $\mathcal A$ from $\Sigma_k$ to $\Sigma_{k+p}$} is the totally symmetric set
\[
\Ind_{k}^{k+p}(\lambda)(\mathcal A) = \{\tilde A_1, \dots, \tilde A_{k+p}\}
\]
acting on the vector space $\C[\subsetsof{k+p}{p}] \otimes V$ via the following construction:

Define the set $[>k] = \{k+1, \dots, k+p\}$, and then choose a set $\{\sigma_S\}$ of elements of $\Sigma_{k+p}$ in bijection with $\subsetsof{k+p}{p}$ with the property that
\begin{equation}\label{equation:sigmaS}
\sigma_S(S) = [>k];
\end{equation}
take in particular $\sigma_{[>k]}$ to be the identity. For $i \in [>k]$, define $A_i \in \End(V)$ by $A_i = \lambda I$. Then define the elements $\tilde A_i$ for $i \in [k+p]$ of $\Ind_{k}^{{k+p}}(\lambda)(\mathcal A)$ by the formula
\begin{equation}\label{equation:tildeA}
\tilde A_i(S\otimes v) = S \otimes A_{\sigma_S(i)}(v).
\end{equation}

To see that $\Ind_{k}^{k+p}(\lambda)(\mathcal A)$ forms a totally symmetric set, we must check that the elements $\tilde A_i$ pairwise commute, and we must define a realization map $\tilde \rho: \Sigma_{k+p} \to \GL(\C[\subsetsof{k+p}{p}] \otimes V)$. The commutativity is easy to establish from the commutativity of $\mathcal A$ and \eqref{equation:tildeA}:
\[
\tilde A_i \tilde A_j(S\otimes v) = \tilde A_i(S \otimes A_{\sigma_S(j)}(v) = S \otimes A_{\sigma_S(i)} A_{\sigma_S(j)}(v) = S \otimes A_{\sigma_S(j)} A_{\sigma_S(i)}(v) = \tilde A_j \tilde A_i (S \otimes v).
\]

We next describe the realization map $\tilde \rho: \Sigma_{k+p} \to \GL(\C[\subsetsof{k+p}{p}] \otimes V)$. Define, for $\tau \in \Sigma_{k+p}$ and $S\otimes v\in \C[\subsetsof{k+p}{p}] \otimes V$,
\begin{equation}\label{equation:tilderho}
\tilde \rho(\tau)(S \otimes v) = \tau(S)\otimes \rho(\sigma_{\tau(S)} \tau \sigma^{-1}_{S})(v).
\end{equation}
Note that 
\[
\sigma_{\tau(S)} \tau \sigma^{-1}_{S}([>k]) = \sigma_{\tau(S)} \tau (S) = [>k],
\]
so that $\sigma_{\tau(S)} \tau \sigma^{-1}_{S} \in \Sigma_k \le \Sigma_{k+p}$, and so the application of $\rho$ to $\sigma_{\tau(S)} \tau \sigma^{-1}_{S}$ is sensible. The lemma below completes the induction construction by showing that $\tilde \rho$ as defined in \eqref{equation:tilderho} makes $\Ind_{{k}}^{{k+p}}(\lambda)(\mathcal A)$ into a totally symmetric set.

\begin{lemma}
The set map $\tilde \rho$ of \eqref{equation:tilderho} satisfies the formula
\[
\tilde A_{\tau(i)} (S\otimes v) = \tilde \rho(\tau) \tilde A_i \tilde \rho(\tau)^{-1}(S\otimes v)
\]
for all $\tau \in \Sigma_{k+p}, i \in [k+p], S \in \subsetsof{k+p}{p}$, and $v \in V$.
\end{lemma}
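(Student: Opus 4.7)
The plan is to sidestep any explicit computation of $\tilde\rho(\tau)^{-1}$ by instead verifying the equivalent intertwining statement
\[
\tilde\rho(\tau)\,\tilde A_i \;=\; \tilde A_{\tau(i)}\,\tilde\rho(\tau)
\]
as operators on $\C[\subsetsof{k+p}{p}]\otimes V$, which I would check on basis tensors $S\otimes v$. Unwinding \eqref{equation:tildeA} and \eqref{equation:tilderho} and writing $\nu \coloneq \sigma_{\tau(S)}\tau\sigma_S^{-1}$ (which is an element of $\Sigma_k$, as already noted in the excerpt just after \eqref{equation:tilderho}), the left-hand side becomes $\tau(S)\otimes \rho(\nu)A_{\sigma_S(i)}(v)$ and the right-hand side becomes $\tau(S)\otimes A_{\sigma_{\tau(S)}(\tau(i))}\rho(\nu)(v)$. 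So the lemma is reduced to proving the following identity of endomorphisms of $V$:
\[
\rho(\nu)\,A_{\sigma_S(i)} \;=\; A_{\sigma_{\tau(S)}(\tau(i))}\,\rho(\nu).
\]

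The key bookkeeping observation, which I would record next, is that by the very definition of $\nu$,
\[
\nu(\sigma_S(i)) \;=\; \sigma_{\tau(S)}\tau\sigma_S^{-1}\sigma_S(i) \;=\; \sigma_{\tau(S)}(\tau(i)),
\]
so the identity to prove is exactly $\rho(\nu)\,A_j = A_{\nu(j)}\,\rho(\nu)$ for $j \coloneq \sigma_S(i)$. I would then split into two cases. If $j\in[k]$, then since $\nu\in\Sigma_k$ we also have $\nu(j)\in[k]$, and the identity is precisely the defining conjugation relation \eqref{equation:rho} for the original totally symmetric set $\mathcal A$, applied with permutation $\nu$ and index $j$. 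If $j\in[>k]$, then $A_j = \lambda I$ by construction, and because $\nu\in\Sigma_k$ setwise preserves $[>k]$ we also have $\nu(j)\in[>k]$ and $A_{\nu(j)} = \lambda I$; both sides of the identity are then equal to $\lambda\rho(\nu)$.

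The main obstacle is not any hard computation but rather the careful index bookkeeping: the conjugation relation \eqref{equation:rho} that we have in hand concerns only $\Sigma_k$ and indices in $[k]$, yet we must leverage it to prove an analogous intertwining relation over the larger symmetric group $\Sigma_{k+p}$ and all indices in $[k+p]$. The definition $\nu = \sigma_{\tau(S)}\tau\sigma_S^{-1}$ is engineered precisely so that $\nu$ lands back in $\Sigma_k$, and the choice $A_i = \lambda I$ for $i\in[>k]$ is exactly what makes the out-of-range case trivially compatible with the conjugation law, so once these two design features are in view the argument is essentially formal.
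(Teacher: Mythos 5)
Your proof is correct and takes essentially the same approach as the paper: both verify the intertwining relation $\tilde\rho(\tau)\tilde A_i = \tilde A_{\tau(i)}\tilde\rho(\tau)$ on basis tensors and reduce it to a conjugation identity for the underlying totally symmetric set. In fact your explicit case split on whether $j=\sigma_S(i)$ lies in $[k]$ or in $[>k]$ is slightly more careful than the paper, which cites \eqref{equation:rho} directly even though that relation is stated only for $i\in[k]$ and must be tacitly extended to $i\in[>k]$ using $A_i=\lambda I$ and the fact that $\nu\in\Sigma_k$ preserves $[>k]$ setwise.
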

\begin{proof}
We will show that
\[
\tilde \rho(\tau) \tilde A_i (S\otimes v) = \tilde A_{\tau(i)} \tilde \rho(\tau) (S \otimes v).
\]
On the one hand, by \eqref{equation:tildeA}, \eqref{equation:tilderho} and then \eqref{equation:rho},
\begin{align*}
\tilde \rho(\tau) \tilde A_i (S\otimes v) &= \tilde \rho (\tau) \left(S \otimes A_{\sigma_S(i)}(v)\right)\\
&= \tau(S) \otimes \rho(\sigma_{\tau(S)} \tau \sigma^{-1}_{S})\left( A_{\sigma_S(i)}(v)\right)\\
&= \tau(S) \otimes A_{\sigma_{\tau(S)}\tau(i)}\left(\rho(\sigma_{\tau(S)} \tau \sigma^{-1}_{S})(v) \right).
\end{align*}
On the other hand, by \eqref{equation:tilderho} and then \eqref{equation:tildeA},
\begin{align*}
\tilde A_{\tau(i)} \tilde \rho(\tau) (S \otimes v) &= \tilde A_{\tau(i)} \left(\tau(S) \otimes \rho(\sigma_{\tau(S)} \tau \sigma^{-1}_{S}) \left(v\right) \right)\\
&= \tau(S) \otimes A_{\sigma_{\tau(S)}\tau(i)}\left( \rho(\sigma_{\tau(S)} \tau \sigma^{-1}_{S}) \left(v\right)\right)
\end{align*}
\end{proof}
\end{construction}

\begin{remark}
The construction of $\Ind_k^{k+p}(\lambda)(\mathcal A)$ depends on a non-canonical choice of the elements $\{\sigma_S \mid S \in \subsetsof{k+p}{p}\}$. It is straightforward to verify that different choices of such sets lead to isomorphic totally symmetric sets. 
\end{remark}

\begin{remark}\label{remark:diagonal}
If $\mathcal A$ acts diagonalizably on $V$ (e.g. if $\mathcal A$ is irreducible), then the construction shows that $\Ind_k^{k+p}(\lambda)(\mathcal A)$ acts diagonalizably on $\C[\subsetsof{k+p}{p}]\otimes V$.
\end{remark}

\para{Examples} In the following examples, we will adopt a compact notation scheme for depicting a diagonalizable totally symmetric set as a tableau. Each row corresponds to an element of the totally symmetric set, and each column corresponds to an element of a simultaneous eigenbasis; the entries index the corresponding eigenvalues. Where helpful, we will add an additional row along the top indicating the subsets indexing basis elements. 

The realization maps in this setting can be chosen to arise via the action of permutation matrices, implying that in all tableau below, every permutation of the rows can be realized by a permutation of the columns. A similar construction appears in \cite{Kordek-Margalit}, which classifies square matrices satisfying the property that every permutation of the rows is realized by a permutation of the columns. The proof of \cite[Lemma 2.5]{Kordek-Margalit}, properly interpreted in our context, can be understood to say that every irreducible totally symmetric set with cardinality equal to the dimension is isomorphic to the {\em standard construction}, as defined below. 




\begin{example}[The standard totally symmetric set]\label{example:standard}
Let $\mathcal A = \{A_1, A_2\} \subset \End(\C)$ be the degenerate totally symmetric set depicted in tableau form as shown:
\[
\mathcal A = \begin{array}{c} 1\\ 1 \end{array}.
\]
The induction $\tilde A = \Ind_2^3(2)(\mathcal A)$ then has tableau
\[
\tilde A = \begin{array}{ccc}
            \{1\}   & \{2\} & \{3\} \\ \hline
            2       &   1   &   1     \\
            1       &   2   &   1    \\
            1       &   1   &   2    \\

\end{array}.
\]
We could also obtain $\tilde A$ by starting with the one-element totally symmetric set $\{(2)\}$ and applying $\Ind_1^3(1)$.

More generally, letting $\mathcal A^{triv}_{k}(\lambda)$ be the $k$-element degenerate irreducible totally symmetric set with eigenvalue $\lambda$, we call a totally symmetric set of the form $\Ind_k^{k+1}(\nu)(\mathcal A^{triv}_{k}(\lambda))$ (with $\nu \ne \lambda$) {\em standard} and write $\mathcal A_k^{std}(\lambda, \nu)$.
\end{example}
\begin{example}[Permutation-type totally symmetric sets]\label{example:perm}
Let $\mathcal A = \{A_1\} \subset \End(\C)$ be the degenerate totally symmetric set given by $A_1 = 1$. We first construct $\mathcal A_2:= \Ind_1^2(2)(\mathcal A)$, an instance of the standard construction:
\[
\mathcal A_2 = \begin{array}{cc}
            e_1     & e_2   \\ \hline 
            2       & 1     \\
            1       & 2
\end{array}.
\]
Next we construct $\mathcal A_3 = \Ind_2^3(3)(\mathcal A_2)$:
\[
\mathcal A_3 = \begin{array}{cccccc}
    \{1\}\otimes e_1& \{1\}\otimes e_2  &     \{2\}\otimes e_1& \{2\}\otimes e_2  &    \{3\}\otimes e_1& \{3\}\otimes e_2\\ \hline
    3               &   3               &   1   &   2   & 2     & 1\\
    2               &   1               &   3   &   3   & 1     & 2\\
    1               &   2               &   2   &   1   & 3     & 3
\end{array}.
\]
One can of course repeat this construction as many times as desired. A more compact description is as follows: one chooses an injective function $\lambda: [k] \to \C$, and then defines $\mathcal A(\lambda)$ as the totally symmetric set acting on $\C[\Sigma_k]$ via the formula
\[
A_i(\sigma) = \lambda(\sigma(i))\sigma,
\]
with $\rho(\tau)$ acting on $\C[\Sigma_k]$ by right-multiplication by $\tau^{-1}$. We call such totally symmetric sets {\em of permutation type}.
\end{example}

\begin{example}[The partition construction]\label{example:partition}
The construction of permutation-type totally symmetric sets in the previous example admits a broad generalization. Let $\kappa$ be a partition of $[k]$, which we view as an equivalence relation $\sim_\kappa$ on $[k]$. Let the equivalence classes have size $k_1, \dots, k_m$. Let $\vec\lambda: [k] \to \C$ be a function with the property that
\[
\vec\lambda(a) = \vec\lambda(b) \iff a \sim_\kappa b.
\]
We call such a function $\vec \lambda$ a {\em weight}.

We define a totally symmetric set called the {\em partition construction} $\mathcal A(\vec\lambda)$ associated to $\vec\lambda$. The vector space $V_{\vec\lambda}$ is defined to be the space spanned by functions of the form $\vec \lambda \circ \sigma$ for $\sigma \in \Sigma_k$. We identify two such functions $\vec\lambda \circ \sigma$ and $\vec \lambda \circ \tau$ if they agree everywhere, so that 
\[
\dim(V_{\vec \lambda}) = \binom{k}{k_1, \dots, k_m} = \frac{k!}{k_1!\dots k_m!},
\]
where $\binom{k}{k_1,\dots,k_m}$ denotes the {\em multinomial coefficient}. The element $A_i \in \End(V_{\vec \lambda})$ of $\mathcal A(\vec\lambda)$ acts diagonally on $V_{\vec \lambda}$ by the formula
\[
A_i(\vec \lambda \circ \sigma) = \vec \lambda(\sigma(i)) \lambda \circ \sigma.
\]
Note that this is totally symmetric, with $\tau \in \Sigma_k$ acting on $\lambda \circ \sigma$ via precomposition with $\tau^{-1}$: 
\[
(\tau \cdot (\lambda \circ \sigma))(i) = (\lambda \circ \sigma)(\tau^{-1}(i)).
\]
\end{example}
The following lemma is evident from the induction and partition constructions, but is crucial in what follows.
\begin{lemma}[Partitions as repeated induction]
\label{lemma:indpart}
Let $\lambda_1, \dots, \lambda_m \in \C$ be distinct, and let $k_1, \dots, k_m$ be positive integers; for $1 \le j \le m$ define $K_j:= k_1 + \dots + k_j$ (taking $K_0 =0$). Construct a sequence of totally symmetric sets $\mathcal A_j$ for $0 \le j \le m$ (with $\mathcal A_0$ the zero-element totally symmetric set acting on $\C^0$) recursively via
\[
\mathcal A_{j+1} = \Ind_{K_j}^{K_{j+1}}(\mathcal A_j).
\]
Then $\mathcal A_{m}$ is isomorphic to the partition construction $\mathcal A(\vec \lambda)$ for the weight function $\vec \lambda: [k] \to \C$ given by $\vec \lambda(i) = \lambda_j$ for $K_{j-1} < i \le K_j$.

Conversely, all partition constructions arise via repeated induction.
\end{lemma}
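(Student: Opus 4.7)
The plan is to prove the forward direction by induction on $j$, constructing an explicit isomorphism between each iterated induction $\mathcal A_j$ and the corresponding partition construction; the converse then follows from observing when two weights give the same partition construction. For the base case ($j=1$), both $\mathcal A_1 = \Ind_{0}^{K_1}(\lambda_1)(\mathcal A_0)$ and $\mathcal A(\vec\lambda_1)$ (for the constant weight $\vec\lambda_1 \equiv \lambda_1$) are the $k_1$-element totally symmetric set consisting of $\lambda_1 I$ on a one-dimensional space, so they agree up to isomorphism.

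For the inductive step, assume $\mathcal A_j \cong \mathcal A(\vec\lambda_j)$ and identify their underlying vector spaces. Define
\[
\Phi \colon \C\bigl[\tbinom{K_{j+1}}{k_{j+1}}\bigr] \otimes V_{\vec\lambda_j} \longrightarrow V_{\vec\lambda_{j+1}}, \qquad \Phi(S \otimes \vec\lambda_j \circ \tau) = \vec\lambda_{j+1} \circ (\tilde\tau\,\sigma_S),
\]
where $\tilde\tau \in \Sigma_{K_{j+1}}$ is the extension of $\tau \in \Sigma_{K_j}$ by the identity on $[>K_j]$ and $\sigma_S$ is the chosen permutation sending $S$ to $[>K_j]$. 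Evaluating the image function at $i \in [K_{j+1}]$ yields $\lambda_{j+1}$ when $i \in S$ (using $\vec\lambda_{j+1}|_{[>K_j]} \equiv \lambda_{j+1}$) and $\vec\lambda_j(\tau(\sigma_S(i)))$ when $i \notin S$ (using $\vec\lambda_{j+1}|_{[K_j]} = \vec\lambda_j$), both depending only on the function $\vec\lambda_j \circ \tau$; hence $\Phi$ is well-defined. Injectivity on basis elements---recovering $S = \mu^{-1}([>K_j])$ from the image $\vec\lambda_{j+1}\circ\mu$---combined with the multinomial identity $\binom{K_{j+1}}{k_{j+1}}\binom{K_j}{k_1,\dots,k_j} = \binom{K_{j+1}}{k_1,\dots,k_{j+1}}$ shows $\Phi$ is a linear isomorphism. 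To verify $\Phi \tilde A_i = A_i \Phi$: on the induction side $\tilde A_i(S \otimes v) = S \otimes A_{\sigma_S(i)}(v)$, which by the inductive hypothesis equals $\lambda_{j+1}(S \otimes v)$ when $i \in S$ and $\vec\lambda_j(\tau(\sigma_S(i)))(S \otimes v)$ when $i \notin S$; on the partition side, $A_i$ acts on $f = \vec\lambda_{j+1}\circ (\tilde\tau\,\sigma_S)$ as multiplication by $f(i) = \vec\lambda_{j+1}(\tilde\tau\,\sigma_S(i))$, and the corresponding case split produces the same two scalars. Since isomorphism of totally symmetric sets requires only conjugacy of the $A_i$ (\Cref{definition:iso}), no matching of realization maps is needed.

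For the converse, note that $V_{\vec\lambda}$ is by construction the span of the $\Sigma_k$-orbit of $\vec\lambda$ under precomposition, and each $A_i$ acts on a basis function $f$ by multiplication by $f(i)$; thus both the underlying vector space and the action of each $A_i$ depend only on the multiset of values of $\vec\lambda$, not on its particular ordering. Given an arbitrary weight, extracting its distinct values $\lambda_1,\dots,\lambda_m$ with multiplicities $k_1,\dots,k_m$ and applying the forward direction realizes $\mathcal A(\vec\lambda)$ as $\mathcal A_m$. The main obstacle in this approach is purely notational: tracking how the noncanonical permutations $\sigma_S$ and the extensions $\tilde\tau$ interact with the piecewise-defined weight $\vec\lambda_{j+1}$, after which both the isomorphism check and the intertwining reduce to mechanical computation.
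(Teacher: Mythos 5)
The paper states this lemma without proof, calling it ``evident from the induction and partition constructions,'' so there is no argument in the paper to compare against; your proposal supplies one, and it is correct. Your explicit isomorphism $\Phi(S \otimes \vec\lambda_j\circ\tau) = \vec\lambda_{j+1}\circ(\tilde\tau\sigma_S)$ does the work: well-definedness follows from evaluating at each $i$, surjectivity from recovering $S = f^{-1}(\lambda_{j+1})$ and $\vec\lambda_j\circ\tau = f\circ\sigma_S^{-1}|_{[K_j]}$ from a target basis function $f$, the dimension count from the multinomial identity, and the intertwining $\Phi\tilde A_i = A_i\Phi$ from the case split $i \in S$ versus $i \notin S$; the converse correctly uses that precomposing $\vec\lambda$ by a permutation of $[k]$ leaves both $V_{\vec\lambda}$ and the operators $A_i(f) = f(i)f$ literally unchanged. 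The only cosmetic wrinkle, inherited from the paper's own notation, is that $\mathcal A_0$ ``acting on $\C^0$'' must be read so that $\Ind_0^{K_1}(\lambda_1)(\mathcal A_0)$ acts on a one-dimensional space (as the paper itself intends in the remark following \Cref{mainthm:irred}), and your base case uses that convention correctly.
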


\section{Classification of irreducible totally symmetric sets}\label{section:proofA}
We come now to the first main result: a complete classification of irreducible totally symmetric sets. \Cref{mainthm:summary} is proved here as \Cref{corollary:partition} of \Cref{mainthm:irred}. The analysis is based on the notion of {\em depth} of an eigenvalue, discussed in \Cref{subsection:depth}. Depth allows one to establish an important converse to \Cref{mainthm:irred}, namely that the induction of an irreducible totally symmetric set is (in general) irreducible; this appears in \Cref{subsection:depth} as \Cref{prop:indirred}. Having established the notion of depth, the proofs of \Cref{mainthm:irred} and \Cref{corollary:partition} follow in \Cref{subsection:proofA}.

Recall that \Cref{convention:commute} is still in effect: in this section, all totally symmetric sets are assumed to be commutative.

\subsection{Depth}\label{subsection:depth}
Let $\mathcal A \subset \End(V)$ be a totally symmetric set of cardinality $k$, and let $\lambda$ be an eigenvalue of $\mathcal A$. Briefly, the {\em depth} of $\lambda$ records the size of the largest subset $S \subset [k]$ for which the $p$-fold eigenspace $E_\lambda^S$ has positive dimension. \Cref{lemma:depth} and \Cref{definition:depth} make this precise.
\begin{lemma}
\label{lemma:depth}
Let $\mathcal A \subset \End(V)$ be a totally symmetric set of cardinality $k$. Fix $j \le k$, and let $\lambda$ be an eigenvalue of $\mathcal A$. Let $S \subset [k]$ be any subset of cardinality $j$. Then the dimension of $E_\lambda^S$ is an integer $\mu_\lambda (j)$ that depends only on $j$.
\end{lemma}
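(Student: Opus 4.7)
The plan is that this is a direct consequence of \Cref{lemma:conjformula} combined with the transitivity of the $\Sigma_k$-action on $j$-element subsets of $[k]$. Concretely, given any two subsets $S, S' \subset [k]$ of cardinality $j$, choose $\sigma \in \Sigma_k$ with $\sigma(S) = S'$. By \Cref{lemma:conjformula} (specialized to degree $c=1$), the element $\rho(\sigma) \in \GL(V)$ restricts to a linear isomorphism
\[
\rho(\sigma): E_\lambda^{S} \xrightarrow{\ \cong\ } E_\lambda^{\sigma(S)} = E_\lambda^{S'}.
\]
In particular $\dim E_\lambda^{S} = \dim E_\lambda^{S'}$, so the common value depends only on $|S| = j$; we set $\mu_\lambda(j)$ to be this dimension.

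There is no real obstacle here: the entire content of the statement was already packaged into \Cref{lemma:conjformula}, and the only ingredient added is that $\Sigma_k$ acts transitively on the set $\binom{[k]}{j}$ of $j$-element subsets (a standard fact). I might additionally flag, as a sanity check, the edge cases $j=0$ (where $E_\lambda^{\emptyset} = V$ and $\mu_\lambda(0) = \dim V$) and $j=k$ (where $\mu_\lambda(k) = \dim\bigcap_{i=1}^{k} E_\lambda^i$), but neither requires any argument beyond the definition. The substantive use of the lemma is downstream: once $\mu_\lambda(j)$ is well-defined as a function of $j$ alone, it gives a meaningful notion of "depth" by recording the largest $j$ for which $\mu_\lambda(j) > 0$, which is what is needed in the subsequent sections.
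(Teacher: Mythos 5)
Your proof is correct and takes essentially the same approach as the paper's: both invoke \Cref{lemma:conjformula} together with transitivity of the $\Sigma_k$-action on $j$-element subsets to conclude that $E_\lambda^S$ and $E_\lambda^{S'}$ are isomorphic, hence of equal dimension. The paper states this in a single sentence; you have simply unpacked the same argument in slightly more detail.
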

\begin{proof}
By \Cref{lemma:conjformula}, for subsets $S, S'$ of equal cardinality, the spaces $E_\lambda^S$ and $E_\lambda^{S'}$ are isomorphic.
\end{proof}

We extend the definition of the integers $\mu_\lambda(j)$ to $j > k$ by taking $\mu_\lambda(j) = 0$ for $j > k$.

\begin{definition}[Depth]
\label{definition:depth}
Let $\mathcal A = \{A_1, \dots, A_k\} \subset \End(V)$ be a totally symmetric set. The {\em depth} of an eigenvalue $\lambda$ of $\mathcal A$ is the least positive integer $p_\lambda$ such that $\mu_\lambda(p_\lambda) > 0$, but $\mu_\lambda(p_\lambda+1) = 0$.
\end{definition}

\begin{lemma}
Suppose $\mathcal A$ is an irreducible nondegenerate totally symmetric set of cardinality $k$. Then every eigenvalue of $\mathcal A$ has depth $p < k$.
\end{lemma}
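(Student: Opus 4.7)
The plan is to show directly that $\mu_\lambda(k) = 0$, i.e. that the ``totally common'' generalized eigenspace $W := E_\lambda^{[k]} = \bigcap_{i=1}^k E_\lambda^i$ vanishes. By definition of depth, this is equivalent to the desired conclusion $p_\lambda < k$.

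First I would argue that $W$ is $(\mathcal A, \rho)$-invariant for any realization map $\rho$. The $\mathcal A$-invariance is immediate: on $W$, every $A_i$ acts as $\lambda I$ (since $W$ lies inside the honest eigenspace $E_\lambda^i$ of $A_i$, as I am reading $E_\lambda^i = E_{\lambda,1}^i$ following the standing convention from \Cref{subsection:gln}). For the $\rho$-invariance, I would apply \Cref{lemma:conjformula} with $S = [k]$: every $\sigma \in \Sigma_k$ satisfies $\sigma([k]) = [k]$, so $\rho(\sigma)$ restricts to an isomorphism $W = E_\lambda^{[k]} \to E_\lambda^{\sigma([k])} = W$.

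Having exhibited $W$ as an $(\mathcal A, \rho)$-invariant subspace, irreducibility of $\mathcal A$ forces $W = 0$ or $W = V$. Assume for contradiction that $W = V$. Then each $A_i$ acts on all of $V$ as $\lambda I$, so $A_1 = A_2 = \cdots = A_k = \lambda I$; but then the underlying set $\mathcal A$ is the singleton $\{\lambda I\}$, contradicting nondegeneracy. Hence $W = 0$, so $\mu_\lambda(k) = 0$ and therefore $p_\lambda < k$.

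I do not anticipate any genuine obstacle: the argument is essentially a one-line consequence of \Cref{lemma:conjformula} plus the definition of irreducibility, and the only minor point to be careful about is confirming that the $k$-fold intersection $E_\lambda^{[k]}$ (rather than a smaller intersection) is the right object to test against irreducibility, which is precisely what makes the stabilizing identity $\sigma([k]) = [k]$ in \Cref{lemma:conjformula} do the work.
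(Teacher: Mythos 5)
Your proof is correct and follows essentially the same route as the paper's: test $E_\lambda^{[k]}$ against irreducibility (via \Cref{lemma:conjformula} for $\rho$-invariance) and dispose of the alternative $E_\lambda^{[k]} = V$ using nondegeneracy. The paper states the conclusion more tersely by asserting directly that $E_\lambda^{[k]}$ is a proper invariant subspace on which $\mathcal A$ restricts degenerately; your version makes the two-case dichotomy ($W=0$ or $W=V$) and the resulting contradiction explicit, which is a modest clarity improvement rather than a different argument.
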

\begin{proof}
Let $\mathcal A \subset \End(V)$ be a totally symmetric set of cardinality $k$, and suppose that $\lambda$ is an eigenvalue of depth at least $k$. Then $E_\lambda^{[k]} \subset V$ is a proper $(\mathcal A, \rho)$-invariant subspace on which $\mathcal A$ restricts as a degenerate totally symmetric set, contrary to the assumption that $\mathcal A$ is irreducible and nondegenerate.
\end{proof}

\para{Induction and irreducibility} Depth allows us to address a lingering question in the theory of inductions---when is the induction of an irreducible totally symmetric set itself irreducible? \Cref{prop:indirred} shows that this is the case as long as one uses a ``new'' eigenvalue; this is proved by considering the maximal-depth eigenspaces. The crucial observation from the induction construction is recalled below as \Cref{remark:depth}. 

\begin{remark}
\label{remark:depth}
One sees from the construction of $\Ind_k^{k+p}(\lambda)(\mathcal A)$ (\Cref{induction}) that $\lambda$ has depth at least $p$, and that the depth equals $p$ if and only if $\lambda$ is not an eigenvalue of $\mathcal A$. Moreover, $\C[\subsetsof{k+p}{p}] \otimes V$ is spanned by the collection of $p$-fold eigenspaces $E_\lambda^S$ as $S$ ranges over $\subsetsof{k+p}{p}$, and this sum is direct if and only if $\lambda$ is not an eigenvalue of $\mathcal A$.
\end{remark}

\begin{proposition}\label{prop:indirred}
Let $\mathcal A \subset \End(V)$ be a totally symmetric set of cardinality $k$. Then for any $p > 0$ and any $\lambda$ distinct from all eigenvalues of $\mathcal A$, the induction $\Ind_k^{k+p}(\lambda)(\mathcal A)$ is irreducible if and only if $\mathcal A$ is irreducible. 
\end{proposition}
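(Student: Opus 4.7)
The strategy is to handle the two implications separately. For ``$\mathcal A$ reducible $\Rightarrow \tilde{\mathcal A}$ reducible'', suppose $W \subset V$ is a proper $(\mathcal A, \rho)$-invariant subspace. The natural candidate invariant subspace is $\tilde W := \C[\subsetsof{k+p}{p}] \otimes W \subset \tilde V$. Invariance of $\tilde W$ under $\tilde{\mathcal A}$ follows from \eqref{equation:tildeA}: $\tilde A_i(S \otimes w) = S \otimes A_{\sigma_S(i)}(w)$, and $A_{\sigma_S(i)}$ preserves $W$ regardless of whether $\sigma_S(i) \in [k]$ (by hypothesis) or $\sigma_S(i) \in [>k]$ (in which case $A_{\sigma_S(i)} = \lambda I$). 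Invariance under $\tilde \rho$ follows from \eqref{equation:tilderho}, since $\sigma_{\tau(S)}\tau \sigma_S^{-1} \in \Sigma_k$ and $\rho(\Sigma_k)$ preserves $W$.

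For the harder direction, assume $\mathcal A$ is irreducible, and let $U \subset \tilde V$ be a nonzero $(\tilde{\mathcal A}, \tilde\rho)$-invariant subspace; we must show $U = \tilde V$. The workhorse is the direct sum decomposition from \Cref{remark:depth}: since $\lambda$ is not an eigenvalue of $\mathcal A$, one has $\tilde V = \bigoplus_S E_\lambda^S$ with $S$ ranging over $\subsetsof{k+p}{p}$. A direct computation from \eqref{equation:tildeA} identifies $E_\lambda^S = \{S\} \otimes V$, and restriction of $\tilde{\mathcal A}$ to the distinguished summand $E_\lambda^{[>k]} = \{[>k]\} \otimes V$ (using $\sigma_{[>k]} = \id$) recovers $\mathcal A$, with the $\tilde A_i$ for $i \in [>k]$ restricting to $\lambda I$. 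The stabilizer $\Sigma_k \le \Sigma_{k+p}$ of $[>k]$ acts on this summand via the original realization map $\rho$, so $(\mathcal A, \rho)$-invariant subspaces of $V$ are exactly $(\tilde{\mathcal A}, \tilde\rho)$-invariant subspaces of $E_\lambda^{[>k]}$.

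The key step is the nontriviality $U \cap E_\lambda^{[>k]} \ne 0$. Here I invoke \Cref{prop:diagonal} to conclude $\mathcal A$ is diagonalizable, and \Cref{remark:diagonal} to conclude that $\tilde{\mathcal A}$ is as well. As an invariant subspace of a commuting diagonalizable family, $U$ decomposes as the direct sum of its intersections with the simultaneous eigenspaces of $\tilde{\mathcal A}$, and each such eigenspace is contained in some $E_\lambda^S$ (the index $S$ is recovered from the simultaneous eigenvalue function as the set of coordinates taking the value $\lambda$, which is unambiguous because $\lambda$ is not an eigenvalue of any $A_j \in \mathcal A$). Thus $U = \bigoplus_S (U \cap E_\lambda^S)$, and some summand is nonzero; applying $\tilde\rho(\tau)$ for $\tau$ with $\tau(S) = [>k]$, together with \Cref{lemma:conjformula} and $\tilde\rho$-invariance of $U$, yields $U \cap E_\lambda^{[>k]} \ne 0$.

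To conclude, $U \cap E_\lambda^{[>k]}$ is a nonzero $(\mathcal A, \rho)$-invariant subspace of $V$, so by irreducibility of $\mathcal A$ it equals $V$ and $U \supseteq E_\lambda^{[>k]}$. Since $\tilde\rho(\Sigma_{k+p})$ acts transitively on $\{E_\lambda^S \mid S \in \subsetsof{k+p}{p}\}$ by \Cref{lemma:conjformula}, applying further elements of $\tilde\rho(\Sigma_{k+p})$ gives $U \supseteq E_\lambda^S$ for every $S$, whence $U = \tilde V$. The main obstacle is the intersection argument $U \cap E_\lambda^{[>k]} \ne 0$: this is why simultaneous diagonalizability of $\tilde{\mathcal A}$, channelled through \Cref{prop:diagonal} and \Cref{remark:diagonal}, is essential.
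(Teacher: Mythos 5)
Your proof takes the same route as the paper's: decompose $\C[\subsetsof{k+p}{p}] \otimes V = \bigoplus_S E_\lambda^S$ as in \Cref{remark:depth}, show that a nonzero invariant subspace meets $E_\lambda^{[>k]}$, then use irreducibility of $\mathcal A$ together with transitivity of the $\Sigma_{k+p}$-action on the summands. Your organization is slightly cleaner on the diagonalizability point: the paper writes the projection onto $E_\lambda^{[>k]}$ as a polynomial $\prod_{j>k}\prod_{\nu\ne\lambda}\frac{1}{\lambda-\nu}(\tilde A_j - \nu I)$, which requires the $\tilde A_j$ to be diagonalizable, but asserts this without comment; you correctly route through \Cref{prop:diagonal} and \Cref{remark:diagonal} to justify it, and then decompose $U$ directly as $\bigoplus_S(U\cap E_\lambda^S)$ rather than building the projector.

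The one genuine issue is a quantifier. You fix $U$ to be invariant under the specific realization map $\tilde\rho$ from \Cref{induction}, but \Cref{definition:irreducible} makes reducibility an existential statement over realization maps, so proving irreducibility requires ruling out invariant subspaces for \emph{every} realization map of the induction---the paper is explicit about this, taking ``some realization map $\rho$ (not necessarily of the form arising in \Cref{induction}).'' The fix is immediate and your argument survives it: the decomposition $\bigoplus_S E_\lambda^S$ is canonical, \Cref{lemma:conjformula} holds for any realization map of the induction, and the restriction of an arbitrary $\rho|_{\Sigma_k}$ to $E_\lambda^{[>k]}$ is still \emph{some} realization map for $\mathcal A$, which is all that irreducibility of $\mathcal A$ requires you to rule out. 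But as written your proof establishes a weaker conclusion than the proposition claims.
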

\begin{proof}
First, suppose $\mathcal A$ is reducible; let $W \le V$ be a proper $(\mathcal A, \rho)$-invariant subspace. Then it is clear from the induction construction (\Cref{induction}) that $\C[\subsetsof{k+p}{k}]\otimes W$ is a proper $(\Ind_k^{k+p}(\lambda)(\mathcal A), \tilde \rho)$-invariant subspace which witnesses the reducibility of $\Ind_k^{k+p}(\lambda)(\mathcal A)$.

Conversely, suppose that $\Ind_k^{k+p}(\lambda)(\mathcal A)$ is reducible, and let $W \le \C[\subsetsof{k+p}{p}]\otimes V$ be a proper $(\tilde A, \rho)$-invariant subspace for some realization map $\rho$ (not necessarily of the form arising in \Cref{induction}). We will show that $\mathcal A$ is reducible.

Since $\lambda$ is not an eigenvalue of $\mathcal A$, \Cref{remark:depth} provides for a decomposition
\begin{equation}
    \label{directsum}
\C[\subsetsof{k+p}{p}] \otimes V = \bigoplus_{S \in \subsetsof{k+p}{p}} E_\lambda^S.
\end{equation}
By \Cref{lemma:conjformula}, the subspace $E_\lambda^{[>k]}$ is invariant under the action of the realization map $\rho$ when restricted to $\sigma \in \Sigma_k \le \Sigma_{k+p}$; it is also invariant under the action of $\{\tilde A_1, \dots, \tilde A_k\}$. It is straightforward to see that the restriction of $\{\tilde A_1, \dots, \tilde A_k\}$ to $E_\lambda^{[>k]}$ is isomorphic to $\mathcal A$ in the sense of \Cref{definition:iso}.

Let $P: \C[\subsetsof{k+p}{p}] \otimes V \to E_\lambda^{[>k]}$ be the projection onto $E_\lambda^{[>k]}$ afforded by \eqref{directsum}. We claim that $P(W)$ is a proper subspace of $E_\lambda^{[>k]}$ invariant under the restriction of $\rho$ to $\Sigma_k$ and the restrictions of $\{\tilde A_1, \dots, \tilde A_k\}$ to $E_\lambda^{[k]}$,  which will establish the reducibility of $\mathcal A$.

This will follow from a closer analysis of the projection map $P$. Since $\tilde A_i$ is diagonalizable, the map
\[
P_j = \prod_{\nu \ne \lambda}\frac{1}{\lambda -\nu}\left(\tilde A_j - \nu I\right)
\]
(where the product ranges over all eigenvalues of $\tilde A_j$ other than $\lambda$) is the projection of $\C[\subsetsof{k+p}{p}]\otimes V$ onto $E_\lambda^j$. Thus, $P$ admits an expression of the form
\[
P = \prod_{j > k} P_j.
\]
Since each $\tilde A_i$ and $\nu I$ preserves $W$, it follows that $P$ preserves $W$. Thus, $P(W)$ can be written as the intersection
\begin{equation}
    \label{equation:pw}
P(W) = W \cap E_\lambda^{[>k]}.
\end{equation}
As each of $W$ and $E_\lambda^{[k]}$ are invariant under the restriction of $\rho$ to $\Sigma_k$ and $\{\tilde A_1, \dots, \tilde A_k\}$ to $E_\lambda^{[k]}$, it follows that $P(W)$ is invariant under these transformations as well. 

It remains to show that $P(W)$ is a proper subspace of $E_\lambda^{[>k]}$. If $P(W) = E_\lambda^{[>k]}$, then \eqref{equation:pw} shows that $E_\lambda^{[>k]} \le W$. Then by \Cref{lemma:conjformula}, $W$ must contain all subspaces $E_\lambda^S$, in which case $W$ would not be a proper subspace of $\C[\subsetsof{k+p}{p}] \otimes V$. To see that $P(W)$ contains some nonzero $v \in E_\lambda^{[>k]}$, let $w \in W$ be nonzero. In the coordinates on $\C[\subsetsof{k+p}{p}] \otimes V$ of \eqref{directsum}, 
\[
w = \sum S \otimes v_S
\]
for elements $v_S \in E_\lambda^S$, not all zero. Let $S \subset [k+p]$ be such that $v_S \ne 0$, and let $\sigma_S \in \Sigma_{k+p}$ have the property $\sigma_S(S) = [>k]$. Then by \Cref{lemma:conjformula}, the $E_\lambda^{[>k]}$-coordinate of $\rho(\sigma_S)(w) \in W$ is nonzero, and the projection of $\rho(\sigma_S)(w)$ to $E_\lambda^{[>k]}$ is therefore nonzero as well.
\end{proof}

\subsection{Classification of irreducible totally symmetric sets}\label{subsection:proofA}
\Cref{mainthm:irred} establishes that an irreducible totally symmetric set arises from the induction construction. By an inductive argument, we will use this to show in \Cref{corollary:partition} that there are finitely many classes of irreducible totally symmetric sets of cardinality $k$, indexed by the partitions of $k$. One recovers the partition associated to an irreducible totally symmetric set by fixing any simultaneous eigenvector $v$ for $\mathcal A$ and looking at the multiplicities of the associated $k$ eigenvalues. 

To see how to express an arbitrary irreducible totally symmetric set as an induction, we begin by identifying the totally symmetric set from which it will be induced.

\begin{lemma}\label{lemma:restriction}
Let $\mathcal A = \{A_1, \dots, A_k\} \subset \End(V)$ be a totally symmetric set with eigenvalue $\mu$, and let $S \subset [k]$ be a subset. Then the set of elements
\[
\{A_i: i \not \in S\}
\]
restricts to a totally symmetric set on $E_\mu^S$ which is denoted
\[
\mathcal A'(S).
\]
In the special case where $\mu$ has depth $p$ and $S = [p]$, we write $\mathcal A' \setminus \mu$. In this case, the eigenvalues of $\mathcal A' \setminus \mu$ are exactly those of $\mathcal A$, excluding $\mu$.
\end{lemma}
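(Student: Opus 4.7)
The plan is to verify three items in sequence: (i) $A_j$ preserves $E_\mu^S$ for every $j \notin S$; (ii) the restricted collection $\{A_j|_{E_\mu^S} : j \notin S\}$ admits a realization map; and (iii) in the specialized setting $S = [p]$ with $p$ the depth of $\mu$, the spectrum of $\mathcal A' \setminus \mu$ coincides with that of $\mathcal A$ with $\mu$ removed.

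I would dispose of (i) via commutativity (\Cref{convention:commute}): $A_j$ commutes with each $A_i$ for $i \in S$, hence preserves $E_\mu^i = \ker(A_i - \mu I)$, and therefore their intersection $E_\mu^S$. For (ii), the plan is to take an ambient realization map $\rho \colon \Sigma_k \to \GL(V)$, restrict its domain to the pointwise stabilizer $\Sigma_{[k] \setminus S}$ of $S$, and then restrict each $\rho(\sigma)$ to $E_\mu^S$. Invariance of $E_\mu^S$ under such a $\rho(\sigma)$ is immediate from \Cref{lemma:conjformula}: since $\sigma$ fixes $S$ pointwise, $\rho(\sigma)(E_\mu^i) = E_\mu^i$ for each $i \in S$. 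If two of the restrictions happen to coincide, \Cref{remark:cimpc} forces the restricted set to collapse to a singleton; either way, the result is totally symmetric.

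For (iii), the $\mu$-exclusion direction is the easy half: a $\mu$-eigenvector in $E_\mu^{[p]}$ of some $A_j$ with $j > p$ would lie in $E_\mu^{[p] \cup \{j\}}$, whose dimension $\mu_\mu(p+1)$ is zero by the definition of depth $p$; the inclusion of eigenvalues of $\mathcal A' \setminus \mu$ into those of $\mathcal A$ is formal. The substantive half is to show every $\nu \neq \mu$ in the spectrum of $\mathcal A$ is attained. I would pass to the weight picture: in the ambient context of classifying irreducible commutative totally symmetric sets, \Cref{prop:diagonal} lets us assume $\mathcal A$ is diagonalizable, so $V$ admits a simultaneous eigenbasis indexed by weights $\vec w \colon [k] \to \C$. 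The weight multiset is $\Sigma_k$-invariant via $\rho$; irreducibility then forces a single $\Sigma_k$-orbit of weights, since the span of any orbit is $(\mathcal A, \rho)$-invariant. Non-vanishing of $E_\mu^{[p]}$ combined with depth $p$ forces every weight in this orbit to take the value $\mu$ on exactly $p$ coordinates. For any $\nu \neq \mu$ in the spectrum, $\nu$ appears somewhere in each such weight (necessarily at a position outside the $p$ positions occupied by $\mu$); after a permutation in $\Sigma_{[k] \setminus [p]}$ transporting that position to $p+1$, I obtain a weight with $\mu$ on $[p]$ and $\nu$ at position $p+1$, and the associated basis vector is a $\nu$-eigenvector of $A_{p+1}$ lying in $E_\mu^{[p]}$.

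The delicate step is the orbit-and-permutation argument in (iii), which genuinely uses irreducibility to guarantee a single weight-orbit; reducible examples (e.g.\ diagonal direct sums with disjoint spectra on the summands) show that without this hypothesis a $\nu \neq \mu$ in the spectrum of $\mathcal A$ may fail to appear in $\mathcal A' \setminus \mu$. Everything else---items (i), (ii), and the $\mu$-exclusion half of (iii)---is a formal consequence of commutativity and \Cref{lemma:conjformula}.
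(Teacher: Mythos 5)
Your items (i) and (ii) reproduce the paper's proof almost verbatim: commutativity gives $A_j$-invariance of $E_\mu^S$ for $j\notin S$, and \Cref{lemma:conjformula} shows that $\rho(\tilde\sigma)$ restricts to an automorphism of $E_\mu^S$ whenever $\tilde\sigma$ fixes $S$ pointwise, supplying the realization map. The paper's proof ends there; the final sentence of the lemma --- that the spectrum of $\mathcal A'\setminus\mu$ is exactly $\mathrm{spec}(\mathcal A)\setminus\{\mu\}$ --- is asserted but not argued.

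You have correctly diagnosed a real imprecision in that sentence. The ``substantive half'' (every $\nu\neq\mu$ of $\mathcal A$ survives into $\mathcal A'\setminus\mu$) can fail without irreducibility, which the lemma does not hypothesize. A concrete counterexample: $k=2$, $V=\C^4$, $A_1=\mathrm{diag}(1,2,3,4)$, $A_2=\mathrm{diag}(2,1,4,3)$, with realization map the permutation exchanging $e_1\leftrightarrow e_2$ and $e_3\leftrightarrow e_4$. This is a commutative, reducible totally symmetric set. Taking $\mu=1$ (depth one), $E_\mu^{[1]}=\langle e_1\rangle$, and $\mathcal A'\setminus\mu=\{A_2|_{\langle e_1\rangle}\}$ has only the eigenvalue $2$, whereas $\mathrm{spec}(\mathcal A)\setminus\{1\}=\{2,3,4\}$. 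Your depth argument for the exclusion of $\mu$ and the formal reverse containment are both correct and hold unconditionally, and these are exactly the facts used later: in the proof of \Cref{corollary:partition} one only needs $\mu$ to avoid the spectrum of $\mathcal A'\setminus\mu$ in order to apply \Cref{lemma:indpart}. Your weight-orbit argument for the remaining containment is sound once irreducibility is added as a hypothesis: \Cref{prop:diagonal} gives simultaneous diagonalizability, the span of any $\Sigma_k$-orbit of weight vectors is $(\mathcal A,\rho)$-invariant so irreducibility forces a single orbit, and transporting a $\nu$-coordinate to position $p+1$ via $\Sigma_{[k]\setminus[p]}$ exhibits $\nu$ as an eigenvalue of $A_{p+1}|_{E_\mu^{[p]}}$. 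In short, you match the paper on everything it proves, and you have identified --- and, modulo adding irreducibility to the hypotheses, repaired --- a claim the paper leaves unjustified.
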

\begin{proof}
Since the elements of $\mathcal A$ pairwise commute, the space $E_\mu^S$ is $A_i$-invariant for all $i \in [k]$; in particular this holds for $i \in S^c$. It remains to exhibit a suitable realization map, i.e. one for which $E_\mu^S$ is invariant under the action of $\rho(\sigma)$ for every $\sigma$ that fixes $S$. In fact we will see that {\em any} realization map for $\mathcal A$ will suffice.

Let $\sigma$ be a permutation of $S^c \subset [k]$; we extend $\sigma$ to a permutation $\tilde \sigma$ of $[k]$ that fixes $S$ pointwise. Let $\rho: \Sigma_k \to \GL(V)$ be a realization map for $\mathcal A$. By \Cref{lemma:conjformula}, for such $\tilde \sigma$, the automorphism $\rho(\tilde \sigma)$ restricts to an automorphism of $E_\mu^S$, and induces the permutation $\sigma$ on the set $\{A_i|_{E_\mu^S}: i \not \in S\}$ as required.
\end{proof}

\begin{theorem}\label{mainthm:irred}
Let $\mathcal A \subset \End(V)$ be an irreducible totally symmetric set of cardinality $k$, and let $\mu$ be an eigenvalue of $\mathcal A$ of depth $p$. Then $\mathcal A$ is isomorphic as a totally symmetric set to an induction:
\[
\mathcal A \cong \Ind_{{k-p}}^{k}(\mu)\left( \mathcal A' \setminus \mu\right).
\]
\end{theorem}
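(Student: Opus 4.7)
\noindent\textit{Proof proposal.}
The plan is to build an explicit isomorphism $T\colon \C[\subsetsof{k}{p}] \otimes E_\mu^{[p]} \to V$ that carries the induced totally symmetric set to $\mathcal{A}$. The structural input I will need is the direct sum decomposition
\[
V = \bigoplus_{S \in \subsetsof{k}{p}} E_\mu^S,
\]
which will follow from irreducibility, simultaneous diagonalizability (\Cref{prop:diagonal}), and the maximality built into the definition of depth.

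To prove the decomposition, I fix a simultaneous eigenbasis $\{v_1,\ldots,v_n\}$ afforded by \Cref{prop:diagonal}, and associate to each $v_j$ its \emph{type} $S_j := \{i \in [k] : A_i v_j = \mu v_j\}$. The depth hypothesis forces $|S_j| \le p$ for every $j$, since otherwise $v_j$ would be a nonzero element of $E_\mu^{S_j}$ with $|S_j| > p$. Let $V_0$ be the span of those $v_j$ with $|S_j|=p$; this subspace is manifestly $\mathcal{A}$-invariant, and \Cref{equation:rho} shows that $\rho(\sigma)$ carries a joint eigenvector of type $S_j$ to one of type $\sigma(S_j)$, so $V_0$ is $\rho$-invariant as well. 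The depth hypothesis also guarantees $V_0 \ne 0$ (some $E_\mu^S$ with $|S|=p$ is nonzero), so irreducibility yields $V_0 = V$, and hence $|S_j|=p$ for all $j$. Since $v_j \in E_\mu^S$ iff $S \subseteq S_j$, equality of cardinalities forces $S=S_j$ for $v_j$ to contribute to $E_\mu^S$, giving $E_\mu^S = \mathrm{span}\{v_j : S_j=S\}$; grouping the basis by type then yields the direct sum.

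With the decomposition in hand, for each $S \in \subsetsof{k}{p}$ I fix $\sigma_S \in \Sigma_k$ satisfying $\sigma_S(S) = [p]$ (taking $\sigma_{[p]} = \id$). By \Cref{lemma:conjformula}, $\rho(\sigma_S)$ restricts to an isomorphism $E_\mu^S \xrightarrow{\cong} E_\mu^{[p]}$, and I define $T(S \otimes v) := \rho(\sigma_S)^{-1}(v)$. The direct sum decomposition makes $T$ a linear isomorphism. To check that $T$ intertwines the two actions, I rewrite \Cref{equation:rho} in the form $A_i \rho(\sigma_S)^{-1} = \rho(\sigma_S)^{-1} A_{\sigma_S(i)}$ to get $A_i T(S\otimes v) = \rho(\sigma_S)^{-1} A_{\sigma_S(i)}(v)$, which matches $T(\tilde A_i(S\otimes v)) = T(S \otimes A_{\sigma_S(i)}(v))$ once one verifies that $A_{\sigma_S(i)}|_{E_\mu^{[p]}}$ agrees with what the induction construction prescribes: it is $\mu I$ when $\sigma_S(i) \in [p]$ (matching the ``new'' eigenvalue slots), and it is an element of $\mathcal{A}' \setminus \mu$ when $\sigma_S(i) \notin [p]$, by \Cref{lemma:restriction}. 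Modulo the harmless relabeling of the distinguished $p$-subset implicit in comparing the theorem statement to the induction construction, this gives the required isomorphism of totally symmetric sets.

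I expect the main obstacle to be the direct sum decomposition: the subtle point is that basis vectors of sub-maximal type $|S_j| < p$ do not visibly violate the depth bound and so must instead be excluded via irreducibility applied to $V_0$. A minor notational care: because $\rho$ is only a set map, $\rho(\sigma_S)^{-1}$ should be read as the inverse of the linear automorphism, not as $\rho(\sigma_S^{-1})$; all the conjugation identities I need flow from \Cref{equation:rho} alone.
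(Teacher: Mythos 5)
Your proposal is correct and follows essentially the same route as the paper: establish the direct sum decomposition $V = \bigoplus_{S} E_\mu^S$ via irreducibility and depth, then build the intertwining isomorphism $T$ from the maps $\rho(\sigma_S)$. Your joint-eigenbasis/type argument for directness is a bit more explicit than the paper's (which simply asserts directness from pairwise trivial intersection of the $E_\mu^S$, a step that genuinely needs the simultaneous eigenbasis to justify), so your version is a welcome tightening rather than a different approach.
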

\begin{remark}
An irreducible totally symmetric set with exactly one eigenvalue is necessarily the degenerate totally symmetric set acting on $\C$. This can be viewed as an induction from the empty totally symmetric set; in particular, no special modifications need to be made to the statement of \Cref{mainthm:irred} for it to hold in this case.
\end{remark}
\begin{proof}
Following \Cref{definition:iso} and \Cref{lemma:restriction}, we must construct an isomorphism 
\[
T: V \to \C[\subsetsof{k}{p}]\otimes E_\mu^{[p]}
\]
that conjugates $\mathcal A$ to $\Ind_{{k-p}}^{k}(\mu)(\mathcal A' \setminus \mu)$.

We first claim that there is a decomposition
\begin{equation}\label{equation:vdecomp}
V \cong \bigoplus_{S \in \subsetsof{k}{p}} E_\mu^S.
\end{equation}
To see this, we observe that by the definition of depth (\Cref{definition:depth}), the $p$-fold eigenspaces $E_\mu^S$ and $E_\mu^{S'}$ intersect trivially for distinct $p$-element subsets $S, S' \subset [k]$. Let $E_\mu$ denote the direct sum of $E_\mu^S$ for $S$ a $p$-element subset, viewed as a subspace of $V$; it remains to show that $E_\mu = V$.  \Cref{lemma:conjformula} shows that $E_\mu$ is invariant under the action of the realization map $\rho$, and so the restriction of $\mathcal A$ to $E_\mu$ forms a totally symmetric set. As $\mathcal A$ is assumed to be irreducible, it follows that $E_\mu = V$. 

Having established \eqref{equation:vdecomp}, it remains to identify the direct sum with the tensor product $\C[\subsetsof{k}{p}]\otimes E_\mu^{[p]}$. To do so, we must choose a set of isomorphisms identifying the various $E_\mu^S$ with the fixed space $E_\mu^{[p]}$. Choose a set 
\[
\left \{\sigma_S \mid S \in \subsetsof{k}{p}\right \} \subset \Sigma_k
\]
with the property that $\sigma_S(S) = [p]$. Then by \Cref{lemma:conjformula}, the automorphism $\rho(\sigma_S) \in \GL(V)$ restricts to an isomorphism 
\[
\rho(\sigma_S): E_\mu^S \to E_\mu^{[p]}.
\]
This leads to the definition of the isomorphism $T: V \to \C[\subsetsof{k}{p}] \otimes E_\mu^{[p]}$. Via the decomposition \eqref{equation:vdecomp}, it suffices to specify $T$ for vectors $v_S \in E_\mu^S$ for $S \in \subsetsof{k}{p}$; for $v_S \in E_\mu^S$, take
\begin{equation}
    \label{equation:T}
    T(v_S) = S \otimes \rho(\sigma_{S})(v_S).
\end{equation}

It remains to see that $T$ conjugates $\mathcal A$ to $\Ind_{k-p}^k(\mu)(\mathcal A' \setminus \mu)$, or equivalently that $T^{-1}$ does so in reverse. Recall from \Cref{lemma:restriction} that $\mathcal A' \setminus \mu$ is the totally symmetric set given by restricting $A_i$ for $i>p$ to the $p$-fold eigenspace $E_\mu^{[p]}$; denote the restriction of $A_i$ by $B_i$. Then let
\[
\{\tilde B_1, \dots, \tilde B_k\} \subset \End\left(\C[\subsetsof{k}{p}] \otimes E_\mu^{[p]}\right)
\]
be the output of the induction construction $\Ind_{k-p}^k(\mu)$ as applied to $\mathcal A' \setminus \mu$, using the same set $\{\sigma_S\}$ of coset representatives as used to construct $T$. Then for $i \in [k]$ and an element $v_S \in E_\mu^S \leq V$,
\begin{align*}
T^{-1} \tilde B_i T (v_S) &= T^{-1} \tilde B_i\left(S \otimes \rho(\sigma_{S})(v_S)\right)\\
            &= T^{-1} \left(S \otimes B_{\sigma_{S}(i)}\left(\rho(\sigma_{S})(v_S)\right)\right)\\
            &= T^{-1} \left(S\otimes A_{\sigma_{S}(i)}\left(\rho(\sigma_{S})(v_S)\right)\right)\\
            &= T^{-1} \left(S \otimes \rho(\sigma_{S}) A_{i}(v_S)\right)\\
            &= A_i(v_S),
\end{align*}
the last equality holding by \eqref{equation:T} as applied to $A_i(v_S)$ (recall that $v_S$ is contained in the $A_i$-invariant subspace $E_\mu^S$).
\end{proof}



\begin{corollary}[\Cref{mainthm:summary}]
\label{corollary:partition}
Let $\mathcal A \subset \End(\C^n)$ be an irreducible commutative totally symmetric set of cardinality $k$. Then there is a weight $\vec \lambda$ for which 
\[
\mathcal A \cong \mathcal A (\vec \lambda).
\]
The dimension of the space $V_{\vec \lambda} \cong \C^n$ on which $\mathcal A(\vec \lambda)$ acts is given by the multinomial coefficient $\binom{k}{k_1,\dots, k_p}$. In particular, if $\mathcal A$ is nondegenerate then $k \le n \le k!$, with the unique totally symmetric set of dimension $k$ given by the standard construction (\Cref{example:standard}), and the unique totally symmetric set of dimension $k!$ given by the permutation construction (\Cref{example:perm}).
\end{corollary}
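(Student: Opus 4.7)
My plan is to induct on the cardinality $k$, using \Cref{mainthm:irred} to strip off one eigenvalue at a time and \Cref{lemma:indpart} to reassemble the outcome as a partition construction. The base case $k = 1$ is immediate: an irreducible one-element totally symmetric set acts diagonalizably (\Cref{prop:diagonal}) with a single eigenvalue, so irreducibility forces $V \cong \C$, matching $\mathcal A(\vec\lambda)$ for the evident weight $\vec\lambda : [1] \to \C$.

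For $k \ge 2$, $\mathcal A$ is nondegenerate, and the unlabeled lemma following \Cref{definition:depth} ensures that every eigenvalue $\mu$ has depth $p$ with $1 \le p < k$. Fixing such a $\mu$, \Cref{mainthm:irred} produces an isomorphism
\[
\mathcal A \cong \Ind_{k-p}^k(\mu)\bigl(\mathcal A' \setminus \mu\bigr),
\]
where by \Cref{lemma:restriction} the restriction $\mathcal A' \setminus \mu$ has cardinality $k - p < k$ and does not admit $\mu$ among its eigenvalues. The converse direction of \Cref{prop:indirred} then transfers irreducibility from $\mathcal A$ to $\mathcal A' \setminus \mu$, so the inductive hypothesis furnishes a weight $\vec\lambda' : [k - p] \to \C$ (with no value equal to $\mu$) and an isomorphism $\mathcal A' \setminus \mu \cong \mathcal A(\vec\lambda')$.

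To conclude, I invoke \Cref{lemma:indpart}: expressing $\mathcal A(\vec\lambda')$ itself as a sequence of inductions by its distinct eigenvalues and then appending the additional induction by $\mu$ realizes $\mathcal A$ as the partition construction $\mathcal A(\vec\lambda)$, where $\vec\lambda$ is the extension of $\vec\lambda'$ by $p$ further instances of the value $\mu$. The dimension formula $n = \binom{k}{k_1, \dots, k_m}$ then reads off directly from \Cref{example:partition}. For the sharp bounds in the nondegenerate case, $\vec\lambda$ must be non-constant (otherwise all $A_i$ would coincide, contradicting $|\mathcal A| = k \ge 2$), so the associated partition of $k$ has $m \ge 2$ nonempty parts. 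Elementary inequalities for multinomials with $k_1 + \cdots + k_m = k$ and each $k_i \ge 1$ then yield $k \le \binom{k}{k_1, \dots, k_m} \le k!$, with the lower bound attained uniquely at the partition $(k - 1, 1)$ (the standard construction of \Cref{example:standard}) and the upper bound attained uniquely at $(1, 1, \dots, 1)$ (the permutation construction of \Cref{example:perm}).

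The principal obstacle, as I see it, is the preservation of irreducibility under the passage to $\mathcal A' \setminus \mu$; this is precisely where the converse direction of \Cref{prop:indirred} does its work, and is the reason for the care taken in \Cref{section:induction} to prove both directions. Everything else amounts to unwinding the inductive hypothesis via \Cref{lemma:indpart} and performing a routine combinatorial check for the extremal multinomials.
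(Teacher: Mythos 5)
Your proof is correct and follows essentially the same route as the paper: induct on $k$, express $\mathcal A$ as $\Ind_{k-p}^k(\mu)(\mathcal A' \setminus \mu)$ via \Cref{mainthm:irred}, use the converse direction of \Cref{prop:indirred} to transfer irreducibility to $\mathcal A' \setminus \mu$ (the hypothesis that $\mu$ is not an eigenvalue of $\mathcal A' \setminus \mu$ being supplied by \Cref{lemma:restriction}), apply the inductive hypothesis, and close via \Cref{lemma:indpart}. The paper's written proof starts the induction at $k = 0$ and leaves the ``in particular'' bounds $k \le n \le k!$ and the identification of the extremal cases implicit; your version spells these out (correctly) via the elementary fact that among multinomials $\binom{k}{k_1,\dots,k_m}$ with $m \ge 2$ parts, the minimum $k$ is attained uniquely at the partition $(k-1,1)$ and the maximum $k!$ uniquely at $(1,\dots,1)$.
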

\begin{proof}
We proceed by induction on $k$, the base case $k = 0$ being trivial. By \Cref{mainthm:irred}, $\mathcal A \cong \Ind_{k-p}^k(\mu)(\mathcal A' \setminus \mu)$, and by \Cref{prop:indirred}, irreducibility of $\Ind_{k-p}^k(\mu)(\mathcal A' \setminus \mu)$ implies the irreducibility of $\mathcal A' \setminus \mu$. By the inductive hypothesis, $\mathcal A' \setminus \mu$ arises from the partition construction for some weight function $\vec \lambda': [k-p] \to \C$. By \Cref{lemma:indpart}, 
\[
\mathcal A \cong \Ind_{k-p}^k(\mu)(\mathcal A' \setminus \mu) \cong \Ind_{k-p}^k(\mu)(\mathcal A(\vec \lambda ')) \cong \mathcal A(\vec\lambda)
\]
for an extension $\vec \lambda: [k] \to \C$ of the weight $\vec \lambda'$.
\end{proof}

\begin{example}
We use \Cref{corollary:partition} to classify all irreducible totally symmetric sets of at most $4$ elements. The case $k = 1$ is trivial. There are exactly two types for $k =2$: the degenerate $\mathcal A_2^{triv}(\lambda)$ in $\End(\C)$ with associated partition $2$ and the standard $\mathcal A_2^{std}(\lambda, \nu)$ in $\End(\C^2)$ with associated partition $1 \le 1$. 

For $k = 3$ we have three partitions:
\[
\begin{array}{c|cc}
\mbox{name} & \mbox{partition}        & \mbox{dimension}\\ \hline 
\mbox{triv}        & 3                  &   1\\
\mbox{std}         & 1 \le 2            &   3\\
\mbox{perm}        & 1\le 1 \le 1       &   6
\end{array}.
\]

For $k = 4$ there are five partitions:
\[
\begin{array}{c|cc}
\mbox{name}             & \mbox{partition}        & \mbox{dimension}\\ \hline 
\mbox{triv}             & 4                         &   1\\
\mbox{std}              & 1\le 3                    &   4\\
\Ind_2^4(\mbox{triv})   & 2 \le 2                   &   6\\
\Ind_2^4(\mbox{std})    & 1 \le 1 \le 2             &   12\\
\mbox{perm}             & 1\le 1 \le 1 \le 1        &  24
\end{array}.
\]
\end{example}

\section{Totally symmetric arrangements}\label{section:arrangement}
We turn now to the second half of the paper, with the overall objective of proving \Cref{theorem:sizebound} and \Cref{maintheorem:max}. Accordingly, we relax the assumption of the previous sections that $\mathcal A$ be a {\em commutative} totally symmetric set: {\em \Cref{convention:commute} is no longer in effect.} 

Our study of non-commuting totally symmetric sets will be mediated through a study of the associated arrangements of (generalized) eigenspaces. Here in \Cref{section:arrangement}, we establish some of the basic theory of totally symmetric arrangements. The content here is largely expository: we introduce several key technical notions: the {\em stabilizer subgroup} of an arrangement (\Cref{defintion:arrstab}), the {\em system of subrepresentations construction} (\Cref{subrepsystem}), and the {\em eigenspace construction} (\Cref{espace}). We intertwine this with a discussion of our key examples of totally symmetric arrangements: the {\em simplex arrangement} (\Cref{example:simplex}) and the {\em $\tilde \Sigma_5$ arrangement} (\Cref{example:s5tilde}).

\subsection{Basic notions for totally symmetric arrangements}
\begin{definition}[Totally symmetric arrangement, isomorphism]
\label{definition:tsa}
Let $V$ be a vector space. Succinctly, a {\em totally symmetric arrangement} is a totally symmetric set in the $\GL(V)$-set $\Gr_d(V)$. 

Less tersely, let $\mathcal W = \{W_1, \dots, W_k\}$ be a set of $d$-dimensional vector subspaces of $V$ (henceforth simply called $d$-planes), not necessarily distinct. $\mathcal W$ is said to be a {\em totally symmetric arrangement} if for any permutation $\sigma \in \Sigma_k$, there is an element $P_\sigma \in \GL(V)$ such that the equation
\[
P_{\sigma} W_i = W_{\sigma(i)}
\]
holds for all $i \in [k]$. An arrangement with each $W_i = W$ for some fixed $d$-plane $W$ is said to be {\em degenerate}. We remind the reader that ``collision implies collapse'' (\Cref{remark:cimpc}): if $W_i = W_j$ for some pair of indices $i \ne j$, then the arrangement $\{W_i\}$ is degenerate.

We say that arrangements $\mathcal W = \{W_i\} \subset V$ and $\mathcal W'=\{W_i'\} \subset V'$ are {\em isomorphic} if there is a linear isomorphism $T: V\to V'$ such that the restriction of $T$ to each $W_i \in \mathcal W$ induces an isomorphism $T: W_i \to W_i'$.
\end{definition}

\begin{definition}[Dual arrangement]
\label{definition:dual}
Let $\mathcal W  = \{W_i\}$ be a totally symmetric arrangement in $V$. The {\em dual arrangement} $\mathcal W^* \subset V^*$ is the arrangement of dual planes to the elements $W_i$, i.e. the subspaces
\[
W_i^* = \{\alpha \in V^* \mid \alpha|_{W_i} = 0 \}.
\]
\end{definition}

\begin{definition}[Reduced arrangement]
\label{definition:quotientarrangement}
Let $\mathcal W = \{W_i\}$ be a totally symmetric arrangement in $V$. Define
\[
Q:= \bigcap_{i \in [k]}W_i.
\]
$W$ is {\em reduced} if $Q = \{0\}$, and is {\em reducible} otherwise. The {\em reduced arrangement} $\mathcal W^{red}$ is the totally symmetric arrangement in $V/Q$ given by
\[
\mathcal W^{red} = \{\overline{W_i}\},
\]
where $\overline{W_i}$ denotes the projection of $W_i$ to $V/Q$.
\end{definition}

\begin{lemma}\label{lemma:nondegquot}
If the arrangement $\mathcal W$ is nondegenerate, so is the reduced arrangement $\mathcal W^{red}$.
\end{lemma}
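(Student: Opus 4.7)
I would prove the contrapositive: if $\mathcal W^{red}$ is degenerate, then $\mathcal W$ was already degenerate. The whole argument should be essentially one observation about the kernel of the quotient map, so I don't expect a genuine obstacle here — the real content has already been packed into the definitions.

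First I would set $\pi \colon V \to V/Q$ to be the quotient map, so that $\overline{W_i} = \pi(W_i)$ by definition of $\mathcal W^{red}$. The key point is that because $Q \le W_i$ for every $i \in [k]$, the subspace $W_i$ is saturated with respect to $\pi$; that is,
\[
\pi^{-1}(\pi(W_i)) = W_i + \ker(\pi) = W_i + Q = W_i.
\]
Thus each $W_i$ is recoverable from $\overline{W_i}$ as $W_i = \pi^{-1}(\overline{W_i})$.

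Now assume $\mathcal W^{red}$ is degenerate. By definition this means there is a single subspace $\overline W \le V/Q$ such that $\overline{W_i} = \overline W$ for every $i \in [k]$. Applying $\pi^{-1}$ and using the displayed formula above, we conclude $W_i = \pi^{-1}(\overline W)$ for every $i$. Hence every $W_i$ coincides with the fixed subspace $\pi^{-1}(\overline W) \le V$, which is exactly the statement that $\mathcal W$ is degenerate, completing the contrapositive.

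A brief aside worth including for bookkeeping: one should note in passing that $\mathcal W^{red}$ is indeed a totally symmetric arrangement in $V/Q$, so that its degeneracy is a sensible notion. This is immediate because $Q = \bigcap_i W_i$ is preserved by every realization element $P_\sigma \in \GL(V)$ (the collection $\{W_i\}$ is permuted, so its intersection is fixed), and hence each $P_\sigma$ descends to an automorphism of $V/Q$ that realizes the $\Sigma_k$-action on the planes $\overline{W_i}$.
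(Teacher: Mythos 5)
Your proof is correct and rests on the same key observation as the paper, namely that $Q \subseteq W_i$ for every $i$ so that each $W_i$ is saturated and hence recoverable from its image in $V/Q$. The paper phrases this directly (for $i\ne j$, pick $v\in W_i\setminus W_j$ and show $v+Q$ misses $W_j$, so $\overline{W_i}\ne\overline{W_j}$), while you phrase it as the contrapositive via $W_i=\pi^{-1}(\overline{W_i})$; these are the same argument repackaged.
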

\begin{proof}
As $\mathcal W$ is nondegenerate, for any pair of distinct indices $i\ne j$, there is an element $v \in W_i \setminus W_j$. As $W_j \le V$ is a vector subspace, the coset $v + Q\subset W_i$ is disjoint from $W_j$, hence witnesses the fact that $\overline{W_i}$ and $\overline{W_j}$ are distinct in $V/Q$.
\end{proof}

\para{The stabilizer subgroup} Any arrangement of subspaces $\mathcal W = \{W_1, \dots, W_k\} \subset V$ (even in the absence of total symmetry) carries an associated {\em stabilizer subgroup}. An understanding of the structure of this stabilizer for special arrangements will be useful in the classification arguments to come, c.f. \Cref{lemma:stabstd,lemma:stabs5}.

\begin{definition}[Stabilizer of an arrangement]\label{defintion:arrstab}
Let $\mathcal W = \{W_1, \dots, W_k\} \subset V$ be an arrangement of subspaces. The {\em stabilizer subgroup} is defined as
\[
\Stab(\mathcal W) = \{A \in \GL(V) \mid A W_i = W_i\mbox{ for all }W_i \in \mathcal W\}.
\]
\end{definition}

\para{Systems of subrepresentations} Here we present a general mechanism for constructing totally symmetric arrangements from a representation of a group $G$ given as an extension of $\Sigma_k$.

\begin{construction}[System of subrepresentations]\label{subrepsystem}
Let $G$ be a group admitting a surjective homomorphism $\psi: G \to \Sigma_k$, and let $V$ be a $G$-representation. Define the subgroup $G_1 \le G$ as the stabilizer of $1$ under the action of $G$ on $[k]$ induced by $\psi$.

One can construct a totally symmetric arrangement from this setup as follows. Choose $W_1 \le V$ to be a $G_1$-subrepresentation, and then define $W_i = gW_1$ for any $g \in G$ such that $\psi(g)\cdot 1 = i$. By definition of $G_1$, this is well-defined. This is totally symmetric, with realization map given by a set-theoretic section $\rho: \Sigma_k \to G$ of $\psi$.
\end{construction}

\subsection{Two special arrangements}
Here we apply the system of subrepresentations construction in two important examples; later we will see that these are the only totally symmetric arrangements of maximal possible size.

\para{The simplex arrangement} First, a comment on the terminology: while the mechanism underlying the simplex arrangement is the standard representation of the symmetric group, the term ``standard arrangement'' would clash with the ``standard construction'' of \Cref{example:standard}; we choose ``simplex arrangement'' instead to reflect the fact that the arrangement can be constructed from the vertices (or dually, the faces) of a regular $n$-simplex in $\R^n$ (albeit in different coordinates).

\begin{example}[The (dual) simplex arrangement]
\label{example:simplex}
Concisely, the simplex arrangement and its dual arise as the two possible systems of subrepresentations for $V = V_{std}^n$, the standard representation of $\Sigma_{n+1}$ (the indexing convention is admittedly unusual but is chosen so that $\dim(V_{std}^n) = n$ which will be more convenient for our purposes). There is a decomposition
\[
V_{std}^n = V_{std}^{n-1} \oplus \C
\]
of $V_{std}^n$ as a $\Sigma_{n}$-representation, where we embed $\Sigma_{n}$ into $\Sigma_{n+1}$ as the stabilizer of $1$, as required for the system of subrepresentations construction. Choosing $W_1 = \C$ in this decomposition gives an arrangement of $n+1$ lines in $V_{std}^n \cong \C^n$ which we call the {\em simplex arrangement} $\mathcal S_n$, and choosing $W_1 = V_{std}^{n-1}$ gives an arrangement of $n+1$ hyperplanes in $\C^n$ which we call the {\em dual simplex arrangement} $\mathcal S_n^*$ (indeed, $V_{std}^n$ is canonically self-dual, and these arrangements are identified with each other under the dualizing map).

For later use, it will be helpful to have an explicit coordinate representation. Equip $\C^{n+1}$ with basis $e_1, \dots, e_{n+1}$, and define the $\Sigma_{n+1}$-invariant vector
\[
\gamma = e_1+ \dots + e_{n+1}.
\]
$\C^{n+1}$ carries the permutation action of $\Sigma_{n+1}$, and we define in this way
\[
V_{std}^{n}:= \C^{n+1}/\pair{\gamma}.
\]

The simplex arrangement is given as the arrangement of $k = n+1$ lines in $V^n_{std}$ with $\ell_i$ spanned by the image of $e_i$ in $V_{std}^n$ for $1 \le i \le n+1$ (we will abuse notation and continue to refer to these vectors as $e_i$).

To represent the dual simplex arrangement, we see that the dual space $(V_{std}^n)^*$ is given as the subspace of $(\C^{n+1})^* = \pair{e_1^*, \dots, e_{n+1}^*}$ determined by the condition 
\[
(V_{std}^n)^* = \{\alpha \in (\C^{n+1})^* \mid \alpha(\gamma) = 0\}.
\]
Defining
\[
\gamma^* = e_1^* + \dots + e_{n+1}^*,
\]
one sees that the dual vectors
\[
\alpha_i = (n+1)e_i^* - \gamma^*
\]
are contained in $(V_{std}^n)^*$ and are totally symmetric under the dual action of the standard representation. Thus the arrangement $\ker(\alpha_i)$ is an explicit representation of the dual simplex arrangement in $V_{std}^n$.
\end{example}

\para{Structure of the simplex arrangement} We begin with a general observation. We recall that a $d$-plane $W \subset \C^n$ can be identified with a coset 
\[
W = M_W \GL_d(\C),
\]
where $M_W$ is a $n \times d$ matrix of rank $d$ whose columns form a basis for $W$. Given two such $d$-planes $W_1, W_2 \subset \C^n$, the intersection $W_1 \cap W_2$ is canonically identified with the kernel of the juxtaposition matrix $(M_{W_1}\mid M_{W_2})$ in the following way: the element $M_{W_1}v_1 = M_{W_2}v_2$ corresponds to $(v_1,-v_2) \in \ker (M_{W_1}\mid M_{W_2})$. In particular, $W_1 \cap W_2 = \{0\}$ if and only if $(M_{W_1}\mid M_{W_2})$ is injective.

\begin{proposition}
\label{proposition:simpmax}
Let $\mathcal L = \{\ell_1, \dots, \ell_k\}$ be a totally symmetric arrangement of lines in $\C^n$. Then $k \le n+1$, and if $k = n+1$ then $\mathcal L$ is isomorphic to the simplex arrangement $\mathcal S_n$. Likewise, any totally symmetric arrangement of hyperplanes in $\C^n$ has $k \le n+1$ elements, and if $k = n+1$, it is isomorphic to the dual simplex arrangement $\mathcal S_n^*$.
\end{proposition}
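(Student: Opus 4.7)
The plan is to prove the line case in detail and then deduce the hyperplane case by duality. The strategy combines a rigidity observation with induction on $n$.

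By \Cref{remark:cimpc}, I may assume the lines are pairwise distinct (else the arrangement is degenerate with $k=1$), and since $\sum_i \ell_i$ is invariant under every realization map, I may further assume the lines span $V$. Choose $v_i \in \ell_i \setminus \{0\}$. The key rigidity observation is that all $n$-element subsets of $\{v_1, \ldots, v_k\}$ span subspaces of a common dimension -- any two such subsets are related by a realization map via \Cref{lemma:conjformula} -- and since the lines span $V$, this common dimension is $n$. In other words, \emph{every $n$ of the $v_i$ are linearly independent}.

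The equality case is then immediate. When $k = n+1$, the vectors $v_1, \ldots, v_{n+1}$ satisfy a unique (up to scalar) linear dependence $\sum c_i v_i = 0$, and every $c_i \ne 0$, for otherwise omitting the corresponding $v_j$ would leave $n$ linearly dependent vectors. Rescaling $v_i \mapsto c_i v_i$ achieves $\sum v_i = 0$, and then $[e_i] \mapsto v_i$ defines a well-defined linear isomorphism $V_{std}^n = \C^{n+1}/\pair{\gamma} \to V$ identifying the simplex arrangement $\mathcal S_n$ with $\mathcal L$.

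To establish the bound $k \le n+1$ I induct on $n$. The case $n = 1$ is trivial. For $n = 2$, if $k \ge 4$ then any four of the $\ell_i$ form a totally symmetric sub-arrangement under the subgroup $\Sigma_4 \le \Sigma_k$, so correspond to four distinct points in $\P^1$ on which $\Sigma_4$ acts via $\PGL_2(\C)$; their cross-ratio $\lambda \in \C \setminus \{0,1\}$ must then be invariant under the full $\Sigma_4$-action, which cycles $\lambda$ through the six values $\lambda, 1-\lambda, 1/\lambda, 1/(1-\lambda), \lambda/(\lambda-1), (\lambda-1)/\lambda$, and no single $\lambda$ makes all six coincide. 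For the inductive step $n \ge 3$, project $V \twoheadrightarrow V/\ell_1 \cong \C^{n-1}$; the images $\bar\ell_i$ for $i \ge 2$ form a totally symmetric arrangement in $\C^{n-1}$ under the stabilizer of $1$ in $\Sigma_k$. By collision-collapse in the quotient, either these images are pairwise distinct -- in which case induction gives $k-1 \le n$, as desired -- or they all coincide, forcing all of $\mathcal L$ into a single $2$-plane and contradicting the spanning hypothesis when $n \ge 3$. Finally, the hyperplane case follows by duality: $\mathcal W^* \subset V^*$ is a totally symmetric arrangement of $k$ lines with realizations $(P_\sigma^{-1})^*$, so the line result yields $k \le n+1$ with equality forcing $\mathcal W \cong \mathcal S_n^*$.

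The main obstacle is the base case $n = 2$, which cannot be attacked by the quotient strategy (which would reduce only to dimension one) and requires the ad hoc cross-ratio computation as a substitute; everything else in the argument is driven by the rigidity observation about $n$-subsets.
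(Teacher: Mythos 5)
Your proof is correct, but it takes a genuinely different route to the bound $k \le n+1$ than the paper does. For the classification in the equality case $k = n+1$ you and the paper use essentially the same idea: once all $n$-element subsets of representative vectors are forced to be linearly independent, the unique dependence among $v_1, \dots, v_{n+1}$ has all coefficients nonzero, and a rescaling produces the simplex arrangement. (The paper does not state your ``rigidity observation'' explicitly, but it is what implicitly justifies the nonvanishing of the coefficients $a_i$ in its coordinate argument.) For the bound itself, however, the paper argues directly: it posits a hypothetical $(n{+}2)$nd line, writes everything in explicit coordinates, and derives a contradiction by examining the matrices realizing the two transpositions $(n{+}1, n{+}2)$ and $(1, n{+}1)$ — a uniform-in-$n$ but somewhat opaque computation that visibly depends on $\mathrm{char}(\C) \ne 3$. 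You instead induct on the dimension $n$ by projecting to $V/\ell_1$ and applying collision-implies-collapse, which reduces cleanly to the base case $n = 2$, where the cross-ratio of four distinct points in $\P^1$ obstructs $\Sigma_4$-symmetry. Your argument is more conceptual and explains the failure in characteristic $3$ cleanly (the value $\lambda = -1$ is then a fixed point of all six cross-ratio symmetries), while the paper's argument avoids the case split and the appeal to cross-ratios. One small point worth making explicit: your inductive step needs the $\bar\ell_i$ to be genuine lines, i.e.\ $\ell_i \ne \ell_1$ for $i \ge 2$; this holds because the $\ell_i$ are pairwise distinct lines, but the partial quotient map $\Gr_1(V) \dashrightarrow \Gr_1(V/\ell_1)$ is undefined at $\ell_1$, so the intertwining-map setup of the persistence lemma deserves a sentence of justification.
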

\begin{proof}
We assume that $k \ge n+1$; otherwise there is nothing to show. Without loss of generality, we can assume that $\mathcal L$ is not contained in any proper subspace of $\C^n$. Renumbering the elements of $\mathcal L$ if necessary, we can therefore take vectors $e_i' \in \ell_i$ for $1 \le i \le n$ such that $\{e_1',\dots, e_n'\}$ forms a basis for $\C^n$. Let 
\[
v = \sum_{i = 1}^n a_i e_i'
\]
be some nonzero vector in $\ell_{n+1}$. Then defining $e_i = -a_i e_i'$, in the basis $\{e_i\}$ the arrangement $\{\ell_1,\dots,\ell_{n+1}\}$ is visibly isomorphic to the standard construction.

Continuing to work in these coordinates (where in particular $v = -e_1 - \dots - e_n$), we next posit the existence of an additional line $\ell_{n+2}$, spanned by some element 
\[
w = \sum_{i = 1}^n b_i e_i.
\]
We consider the element $P$ realizing the transposition $(n+1,n+2)$. As this fixes $\ell_i$ for $i \le n$, necessarily $P$ is diagonal. As $P(\ell_{n+1}) = \ell_{n+2}$, we can assume that the diagonal entries are given by the coefficients $b_i$, by adjusting $P$ by some scalar matrix if necessary. As also $P(\ell_{n+2}) = \ell_{n+1}$, it follows that $b_i^2 = 1$ for $i = 1, \dots, n$; without loss of generality we set $b_1 = 1$.

We next examine the element $Q \in \GL_n(\C)$ acting as the transposition $(1, n+1)$. As this fixes the lines $\ell_j$ for $j \ne 1$ and takes $\ell_1$ to $\ell_{n+1}$, this is given in coordinates as
\[
Q(e_j) = \begin{cases}
                -c_{1}\sum_{i = 1}^n e_i & j = 1\\
                c_{j} e_j & j \ne 1
\end{cases}
\]
for some scalars $c_{j} \in \C^{\times}$. We must also have $Q(\ell_{n+1}) = \ell_1$, so that $Q(v)=\lambda e_1$ for some $\lambda \ne 0$. Examining the $e_i$-coefficient of $Q(v)$ for $i \ne 1$ shows that $c_i = c_1 :=c$ is independent of $i$. 

We have $Q(\ell_{n+2}) = \ell_{n+2}$, so that $w$ is an eigenvalue of $Q$ with some eigenvalue $\lambda$. From above, recalling $b_1 = 1$,
\begin{align*}
    Q(w) &= Q(\sum_{i=1}^n b_i e_i)\\
        &= \sum_{i = 1}^n b_i Q(e_i)\\
        &= -c e_1 + \sum_{i = 2}^n c(b_i-1) e_i.
\end{align*}
Examining the first component of this shows that $\lambda = -c$. Examining the second component then shows that $-c b_2 = c(b_2-1)$, so that $b_2 = 1/2$, in contradiction with the condition $b_2^2 = 1$. (We note that this argument would break down over a field of characteristic $3$).

The second statement, concerning the maximality and uniqueness of the dual simplex arrangement, follows directly from the above after passing to the dual space.
\end{proof}

\para{The stabilizer of the simplex arrangement} Recall from \Cref{defintion:arrstab} that the {\em stabilizer} of an arrangement $\mathcal W \subset \C^n$ consists of those matrices $A \in \GL_n(\C)$ that fix each subspace $W_i \in \mathcal W$. Any stabilizer $\Stab(\mathcal W)$ contains the group $\C^\times I$ of scalar matrices. Here we show that for the simplex arrangement and its dual, this is the {\em entire} stabilizer.

\begin{lemma}\label{lemma:stabstd}
Let $\mathcal W \subset \C^n$ denote either the simplex arrangement $\mathcal S_n$ or its dual $\mathcal S_n^*$. Then $\Stab(\mathcal W) = \C^\times I$.
\end{lemma}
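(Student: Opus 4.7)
The plan is to tackle the simplex case $\mathcal S_n$ directly and then deduce the dual case by passing to $V^*$ via the group isomorphism $A \mapsto (A^{-1})^*$ of $\GL(V)$ onto $\GL(V^*)$.

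First I would fix coordinates on $\C^n$ exactly as in the proof of \Cref{proposition:simpmax}: choose $e_i \in \ell_i$ for $1 \le i \le n$ so that $\{e_1, \dots, e_n\}$ is a basis of $\C^n$, in which coordinates $\ell_{n+1}$ is spanned by $v = -(e_1 + \cdots + e_n)$. Given $A \in \Stab(\mathcal S_n)$, the invariance of the $n$ coordinate lines $\ell_1, \dots, \ell_n$ forces $A$ to be diagonal in this basis, say $A e_i = a_i e_i$ with $a_i \in \C^\times$. The remaining condition $A v \in \ell_{n+1}$ reads $A v = \lambda v$ for some $\lambda \in \C^\times$, i.e.\ $-\sum_{i=1}^n a_i e_i = -\lambda \sum_{i=1}^n e_i$. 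Comparing coefficients gives $a_i = \lambda$ for all $i$, so $A = \lambda I$ as required.

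For the dual arrangement $\mathcal S_n^*$, I would invoke the fact that $A \mapsto (A^{-1})^*$ is a group isomorphism $\GL(V) \to \GL(V^*)$ under which stabilizing a hyperplane $H_i \le V$ corresponds to stabilizing the annihilator line $\langle \alpha_i \rangle \le V^*$. In the explicit coordinates of \Cref{example:simplex}, the covectors $\alpha_i = (n+1)e_i^* - \gamma^*$ span a totally symmetric arrangement of $n+1$ lines in the $n$-dimensional space $(V_{std}^n)^*$; by \Cref{proposition:simpmax} this arrangement is isomorphic to a simplex arrangement in $V^*$. Applying the first half of the argument to $V^*$ forces $(A^{-1})^* = \mu I$ for some $\mu \in \C^\times$, and hence $A = \mu^{-1} I$ is scalar as well.

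There is no real obstacle here: once the basis-adapted model of $\mathcal S_n$ from \Cref{proposition:simpmax} is available, the simplex case is a one-line diagonalization argument, and the dual case reduces to it immediately via transpose--inverse. The only point worth being careful about is verifying that the dual line arrangement $\{\langle \alpha_i \rangle\}$ in $V^*$ inherits total symmetry from $\mathcal S_n^*$ (so that \Cref{proposition:simpmax} indeed applies), which is immediate from the fact that the duality $A \mapsto (A^{-1})^*$ carries any realization map for $\mathcal S_n^*$ to one for the arrangement in $V^*$.
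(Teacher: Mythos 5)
Your proposal is correct and takes essentially the same route as the paper: the simplex case is handled by noting that stabilizing the $n$ coordinate lines forces $A$ diagonal, and then the $(n+1)$st line forces the diagonal entries to coincide, while the dual case follows by duality. The paper compresses the dual step to the observation that ``the stabilizer of a dual arrangement is the dual of the stabilizer,'' which is exactly the content of your $A \mapsto (A^{-1})^*$ argument.
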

\begin{proof}
In the standard coordinates $\{e_1, \dots, e_n\}$ on $\C^n = V_{std}^n$, the simplex arrangement $\mathcal S_n$ has $\ell_i = \pair{e_i}$ for $1 \le i \le n$ and $\ell_{n+1} = \pair{e_1+ \dots + e_n}$. Any $A \in \GL_n(\C)$ that fixes $\ell_1, \dots, \ell_n$ is therefore diagonal; fixing $\ell_{n+1}$ forces each diagonal entry to be equal, showing that $A = \lambda I$ is scalar. As the stabilizer of a dual arrangement is evidently the dual of the stabilizer, the claim follows for $\mathcal S_n^*$ as well.
\end{proof}

\begin{corollary}
\label{lemma:mustbestd}
Let $\mathcal W \subset V_{std}^n$ be the simplex arrangement $\mathcal S_n$ or its dual $\mathcal S_n^*$, viewed as a hyperplane arrangement in $V_{std}^n$. Let $\sigma \mapsto P_\sigma \in \GL(V_{std}^n)$ be the standard representation of $\Sigma_{n+1}$. Then every realization map $\rho: \Sigma_{n+1} \to \GL(V_{std}^n)$ for $\mathcal W$ is of the following form:
\[
\rho(\sigma) = \lambda_\sigma P_\sigma
\]
for some set of $\{\lambda_\sigma \mid \sigma \in \Sigma_{n+1}\} \subset \C^\times$. In other words, there is a unique projective class of realization map, given by the standard representation.
\end{corollary}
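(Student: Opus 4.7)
The plan is to reduce the statement to \Cref{lemma:stabstd} by comparing an arbitrary realization map $\rho$ to the standard representation $\sigma \mapsto P_\sigma$. Given $\sigma \in \Sigma_{n+1}$, both $\rho(\sigma)$ and $P_\sigma$ are elements of $\GL(V_{std}^n)$ whose action on the arrangement $\mathcal W$ realizes the same permutation of its elements (namely $\sigma$, under the identification of the index set of $\mathcal W$ with $[n+1]$ that is built into the definitions of $\mathcal S_n$ and $\mathcal S_n^*$). Consequently, the composition $\rho(\sigma) P_\sigma^{-1}$ permutes $\mathcal W$ by the trivial permutation, i.e. it fixes each $W_i \in \mathcal W$ setwise.

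Thus $\rho(\sigma) P_\sigma^{-1} \in \Stab(\mathcal W)$ for every $\sigma \in \Sigma_{n+1}$. By \Cref{lemma:stabstd}, $\Stab(\mathcal W) = \C^\times I$, so there exists a scalar $\lambda_\sigma \in \C^\times$ with
\[
\rho(\sigma) P_\sigma^{-1} = \lambda_\sigma I,
\]
which rearranges to $\rho(\sigma) = \lambda_\sigma P_\sigma$, as required. The ``unique projective class'' assertion is then immediate: two realization maps $\rho, \rho'$ differ only by a (set-theoretic) function $\Sigma_{n+1} \to \C^\times$, so descend to the same map $\Sigma_{n+1} \to \PGL(V_{std}^n)$, which is the projectivization of the standard representation.

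There is essentially no obstacle here once one has \Cref{lemma:stabstd} in hand; the only subtlety worth double-checking is that the indexing of the arrangement by $[n+1]$ used to define the $\Sigma_{n+1}$-action matches the indexing implicit in the standard representation $P_\sigma$, but this is precisely how both $\mathcal S_n$ and $\mathcal S_n^*$ were constructed via the system-of-subrepresentations construction (\Cref{subrepsystem}) applied to $V_{std}^n$. Note that no dual argument is needed for $\mathcal S_n^*$ separately, since the argument above is stated uniformly for either choice of $\mathcal W$.
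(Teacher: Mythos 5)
Your proof is correct and matches the paper's: both observe that $\rho(\sigma)$ and $P_\sigma$ realize the same permutation of $\mathcal W$, so their ratio lies in $\Stab(\mathcal W)$, and then invoke \Cref{lemma:stabstd} to conclude the ratio is scalar. The only cosmetic difference is that the paper forms $\rho(\sigma)^{-1}\rho_{std}(\sigma)$ rather than $\rho(\sigma)P_\sigma^{-1}$, which makes no difference since $\Stab(\mathcal W)$ is a group.
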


\begin{proof}
The standard representation $\rho_{std}:\Sigma_{n+1} \to \GL(V_{std}^n)$ is a realization map. If $\rho: \Sigma_{n+1} \to \GL(V_{std}^n)$ is any other realization map, then the function $\sigma \mapsto \rho(\sigma)^{-1} \rho_{std}(\sigma)$ is valued in the stabilizer $\Stab(\mathcal W)= \C^\times I$, from which the claim follows.
\end{proof}

\begin{remark}
There is an evident similarity between \Cref{lemma:stabstd} and Schur's lemma, in light of the fact that $V_{std}^n$ is irreducible. However it is {\em not} the case that if $V$ is an irreducible representation, then any system of subrepresentations necessarily has stabilizer $\C^\times I$ - e.g. this fails if one takes $W_1 = V$ to be the entire space. It is not immediately clear to the authors if any {\em nondegenerate} system of subrepresentations obtained from an irrep $V$ necessarily has stabilizer $\C^\times I$.
\end{remark}

\para{The $\mathbf{\tilde \Sigma_5}$ arrangement} There is one additional sporadic arrangement that will play an important role in the paper. It arises via a {\em projective representation} of $\Sigma_5$, i.e. a homomorphism $\rho: \Sigma_5 \to \PGL(V)$. For the sake of avoiding a lengthy digression into the theory, we will postpone detailed calculations to \Cref{appendix:s5} and present only an overview in the body of the paper.

\begin{example}[The $\tilde \Sigma_5$-arrangement]\label{example:s5tilde}
Let $V^5_{basic} \cong \C^4$ denote the {\em basic representation} of $\Sigma_5$, a projective representation of dimension $4$. By general theory, $V^5_{basic}$ is a linear representation of a $\Z/2\Z$ extension $\tilde \Sigma_5$ of $\Sigma_5$ (indeed, there are two non-isomorphic such groups $\tilde \Sigma_5$ and $\hat \Sigma_5$, but both induce the same projective representation of $\Sigma_5$).

We apply the system of subrepresentations construction (\Cref{subrepsystem}) to $\tilde \Sigma_5$ acting on $V^5_{basic}$; for simplicity's sake, we formulate the discussion in the setting of projective representations of $\Sigma_5$.

Embed $\Sigma_4$ into $\Sigma_5$ as the stabilizer of $1 \in [5]$. Then $V^5_{basic}$ decomposes as a projective $\Sigma_4$-representation as follows:
\[
V^5_{basic} = V_{basic}^{4} \oplus V_{basic}^{4,a},
\]
with $V_{basic}^{4},V_{basic}^{4,a}$ the pair of ``associate'' basic representations of $\Sigma_4$, each of dimension $2$. This can be seen via a character computation - see the character tables in \cite[pp. 43, 80]{HH}. The $\tilde \Sigma_5$ arrangement $\mathcal W_{\tilde \Sigma_5}$ is then defined as the system of subrepresentations for $V_{basic}^4 \le V_{basic}^5$. In \Cref{example:s5tildeexplicit} in \Cref{appendix:s5}, we give an explicit set $\mathcal W = \{W_1, \dots, W_5\}$ of $4 \times 2$ matrices spanning the associated subspaces; here we remark that this arrangement has the property that $W_i \oplus W_j = \C^4$ for any pair of distinct indices $i \ne j$.
\end{example}

\subsection{The suspension construction} One can use a totally symmetric arrangement to produce interesting examples of reducible totally symmetric sets of endomorphisms, via a procedure called the {\em suspension construction}.

\begin{construction}[Suspension]
\label{construction:suspension}
Let $\mathcal W = \{W_1, \dots, W_k\}$ be a set of $d$-planes in $\C^n$. Suppose that $\mathcal W$ satisfies a strong form of total symmetry: there exists a realization map $\sigma \mapsto P_{\sigma} \in \GL_n(\C)$ and a set of coset representatives $\{M_i\} \subset M_{n,d}(\C)$ for $\{W_i\}$ such that $P_\sigma M_i = M_{\sigma(i)}$. The {\em suspension} $\Sigma \mathcal W$ of $\mathcal W$ is the $k$-element commutative totally symmetric set in $M_{n+d}(\C)$ given by
\[
\Sigma W_i = \abcd{\lambda I}{M_{i}}{0}{\lambda I}
\]
for some choice of parameter $\lambda \in \C$. It is clear that the elements of $\Sigma \mathcal W$ commute. To see that $\Sigma \mathcal W$ is totally symmetric, we observe that under the standard embedding $\C^n \into \C^{n+d}$, the action of $P_\sigma$ extends to an automorphism $P_\sigma \oplus I_d$ of $\C^{n+d}$. Then it is easy to check that
\[
(P_\sigma \oplus I_d) \abcd{\lambda I}{M_{i}}{0}{\lambda I} (P_\sigma \oplus I_d)^{-1} = \abcd{\lambda I}{P_\sigma M_{i}}{0}{\lambda I} = \abcd{\lambda I}{M_{\sigma(i)}}{0}{\lambda I},
\]
demonstrating total symmetry.
\end{construction}

A particularly important example of suspension is given by inputting the simplex arrangement $\mathcal S_n$ of \Cref{example:simplex}.

\begin{example}[The simplex construction]
\label{example:simplexconst}
One observes that the lines of the simplex arrangement $\mathcal S_n$ satisfy the strong form of total symmetry required in \Cref{construction:suspension}. We call the suspension $\Sigma \mathcal S_n$ the {\em simplex construction}. E.g. for $n = 3$, this produces the following $4$-element totally symmetric set in $\End(\C^4)$:

\begin{eqnarray*}
&\Sigma W_1 = \left( \begin{array}{ccc|c}\lambda &&& 1\\ & \lambda && 0\\ &&\lambda&0\\ \hline &&& \lambda \end{array} \right), &\Sigma W_2 = \left( \begin{array}{ccc|c}\lambda &&& 0\\ & \lambda && 1\\ &&\lambda&0\\ \hline &&& \lambda \end{array} \right)\\
&\Sigma W_3 = \left( \begin{array}{ccc|c}\lambda &&& 0\\ & \lambda && 0\\ &&\lambda&1\\ \hline &&& \lambda \end{array} \right), &\Sigma W_4 = \left( \begin{array}{ccc|c}\lambda &&& -1\\ & \lambda && -1\\ &&\lambda&-1\\ \hline &&& \lambda \end{array} \right)
\end{eqnarray*}
In the case $n = 1$, the simplex arrangement itself is degenerate, but the pair of matrices
\[
A_1 = \abcd{\lambda}{1}{0}{\lambda}, \qquad A_2 = \abcd{\lambda}{-1}{0}{\lambda}
\]
are still totally symmetric; we consider this as arising from the simplex construction as well.
\end{example}

\subsection{Decomposition systems and the eigenspace construction} There is a second method for passing from a totally symmetric arrangement to a totally symmetric set that in general produces non-commutative such sets. This takes as input a {\em decomposition system} for a vector space $V$, which we now define.

\begin{definition}[Decomposition system]
A {\em decomposition system $\mathcal D$ of size $k$} for a vector space $V$ is a collection $W_{i,j}$ (with $i \in [k]$ and $j \in [p]$) of subspaces of $V$ such that
\begin{enumerate}
    \item For fixed $i$, there is a direct sum decomposition
    \[
    V = \bigoplus_{j \in [p]} W_{i,j}.
    \]
    \item The decompositions $\bigoplus_{j \in [p]} W_{i,j}$ are totally symmetric in the following sense: for every $\sigma \in \Sigma_{[k]}$, there is $P_\sigma \in \GL(V)$ such that 
    \[
    P_\sigma W_{i,j} = W_{\sigma(i),j}.
    \]
\end{enumerate}
Decomposition systems arise naturally from systems of subrepresentations: one simply fixes not just a $G_1$-subrepresentation $W_1 \le V$, but a decomposition $V = \bigoplus W_j$ as a $G_1$-representation, and then proceeds as in \Cref{subrepsystem} to promote this to a decomposition system. 
\end{definition}

\begin{example}[The simplex system]\label{example:simplexsystem}
The {\em simplex system} $\mathcal D_{simp}^n$ is the decomposition system for the standard representation: for each $i\in [n+1]$, the ambient space $V_{std}^n$ decomposes as 
\[
V_{std}^{n} = (V_{std}^{n-1})_i \oplus (\C)_i.
\]
It is totally symmetric of size $n+1$.
\end{example}

\begin{example}[The $\tilde \Sigma_5$ system]\label{example:s5system}
The second example of a decomposition system we will need arises from the $\tilde \Sigma_5$ arrangement. Returning to the discussion in \Cref{example:s5tilde}, for {\em each} subgroup $\Stab(i) \cong \Sigma_4$ of $\Sigma_5$, one has a canonical decomposition as a projective $\Stab(i)$-representation
\[
V^5_{basic} = (V_{basic}^{4})_i \oplus (V_{basic}^{4,a})_i,
\]
which therefore determines a decomposition system $\mathcal D_{\tilde \Sigma_5}$ on $V_{basic}^5 \cong \C^4$.
\end{example}

Given a decomposition system, one can use a method called the {\em eigenspace construction} to build an associated totally symmetric set.

\begin{construction}[The eigenspace construction]\label{espace}
Let $\mathcal D = \{W_{i,j}\}$ be a decomposition system for $V$. Choose distinct elements $\lambda_1, \dots, \lambda_p \in \C$, and then define a set
\[
\mathcal E(\mathcal D) = \{A_1, \dots, A_k\} \subset \End(V)
\]
via the condition that
\[
E_{\lambda_j}(A_i) = W_{i,j}.
\]
The total symmetry of the decomposition system then gives rise to the total symmetry of $\mathcal E(\mathcal D)$.
\end{construction}

\begin{example}[Non-commutative simplex construction]
\label{example:ncsimplex}
Applying the eigenspace construction to the simplex system of \Cref{example:simplexsystem} gives the {\em non-commutative simplex construction} $\mathcal E (\mathcal D_{simp}^k)$, a $k$-element totally symmetric set in $\GL_{k-1}(\C)$. One has the following explicit expressions for $A_i$:
\begin{equation}\label{equation:ncsimplex}
A_i(v) = \mu v + \tfrac{\lambda-\mu}{n} \alpha_i(v) e_i.
\end{equation}
\end{example}

\begin{example}\label{ex:k3n2}
For later use, it will be convenient to have a realization by explicit matrices in the case $k=3$. An easy computation with the standard representation for $\Sigma_3$ gives the decompositions for $V_{std}^3$, leading to the description of $\mathcal E(\mathcal D_{simp}^3)$ as 
\[
A_1  = \abcd{\lambda}{\frac{\mu-\lambda}{2}}{0}{\mu} \qquad 
A_2  = \abcd{\mu}{0}{\frac{\mu-\lambda}{2}}{\lambda} \qquad
A_3  = \abcd{\frac{\lambda+\mu}{2}}{\frac{\lambda-\mu}{2}}{\frac{\lambda-\mu}{2}}{\frac{\lambda+\mu}{2}}.
\]
\end{example}

\begin{example}[$\tilde \Sigma_5$ construction]\label{s5construction}
Applying the eigenspace construction to the $\tilde \Sigma_5$ system leads to a five-element totally symmetric set
\[
\mathcal A_{\tilde \Sigma_5} \subset \GL_4(\C).
\]
A more explicit description of $\mathcal A_{\tilde \Sigma_5}$ is given in \Cref{example:s5constructionexplicit} in \Cref{appendix:s5}.
\end{example}

\section{The induction hypotheses and the base cases}\label{section:indstart}
We will prove the following results simultaneously by induction on the dimension $n$:

\para{\Cref{theorem:sizebound}} {\em Any totally symmetric set $\mathcal A \subset \End(\C^n)$ has cardinality $k \le n+1$. If $\mathcal A$ is moreover commutative, then $k \le n$.}

\para{\Cref{maintheorem:max}} {\em Let $\mathcal{A}\subset \End(\C^n)$ be a totally symmetric set (commutative or otherwise) of the maximal cardinality allowed by \Cref{theorem:sizebound}. Then

\begin{enumerate}
    \item If $\mathcal A$ is noncommutative and $n \ne 5$, then it arises via the ``noncommutative simplex construction'' of \Cref{example:ncsimplex}. 
    \item If $\mathcal A$ is noncommutative and $n = 5$, then it arises either via the noncommutative simplex construction or else is the ``$\tilde \Sigma_5$ construction'' of \Cref{s5construction}.
    \item If $\mathcal A$ is commutative and $n \ne 4$, then $\mathcal{A}$ is either the ``standard construction'' of \Cref{example:standard} or the ``simplex construction'' of \Cref{example:simplex}.
    \item If $\mathcal A$ is commutative and $n = 4$, then $\mathcal{A}$ is either standard, simplex, or the sporadic construction of \Cref{lemma:4}.
\end{enumerate}
}

\begin{theorem}\label{theorem:tsabound}
A nondegenerate totally symmetric arrangement $\mathcal W = \{W_1, \dots, W_k\} \subset \C^n$ has cardinality $k \le n+1$. Moreover, this inequality is strict unless $\mathcal W$ is the simplex arrangement or its dual, or else if $k = 5$ and $\mathcal W$ is the $\tilde \Sigma_5$-arrangement of \Cref{example:s5tilde}.
\end{theorem}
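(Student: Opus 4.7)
The plan is to argue by strong induction on $n$, jointly with \Cref{theorem:sizebound} and \Cref{maintheorem:max}, so that all three statements are available in every dimension strictly less than $n$. Throughout, set $d := \dim W_i$. First I reduce to the case that $\mathcal{W}$ is reduced and spans $\C^n$: if $Q := \bigcap_i W_i \neq 0$, the reduced arrangement $\mathcal{W}^{red}$ in $\C^n/Q$ is nondegenerate by \Cref{lemma:nondegquot}, and induction gives $k \leq \dim(\C^n/Q)+1 < n+1$ strictly; a dual argument handles the case that $\mathcal{W}$ fails to span. When $d = 1$ or $d = n-1$, \Cref{proposition:simpmax} immediately supplies both the bound and the equality cases (the simplex and its dual), concluding these extremal dimensions.

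For $1 < d < n-1$, I exploit two subordinate arrangements under $\Stab(1) \cong \Sigma_{k-1}$: the \emph{restriction} $\mathcal W_1 := \{W_1 \cap W_i\}_{i\neq 1}$ inside $W_1 \cong \C^d$, and the \emph{quotient} $\overline{\mathcal W}_1 := \{(W_1+W_i)/W_1\}_{i \neq 1}$ inside $\C^n/W_1 \cong \C^{n-d}$. If $\mathcal W_1$ is nondegenerate, the inductive bound in $\C^d$ gives $k-1 \leq d+1$, so $k \leq d+2 \leq n$, strict. Otherwise the reduced hypothesis forces $W_i \cap W_j = 0$ for all $i \neq j$, and I pass to $\overline{\mathcal W}_1$: if this is nondegenerate, induction in $\C^{n-d}$ gives $k-1 \leq (n-d)+1$, again yielding $k \leq n$, strict. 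The sole remaining configuration has $\overline{\mathcal W}_1$ degenerate, which combined with $\sum_i W_i = \C^n$ forces $W_i \oplus W_j = \C^n$ for every $i \neq j$, whence $n = 2d$.

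This complementary case is where the main difficulty lies, and where the joint induction with \Cref{theorem:sizebound} is indispensable. The strategy is to promote $\mathcal W$ to a noncommutative totally symmetric set of endomorphisms via an eigenspace construction (\Cref{espace}) built from the pairwise complementarity: fixing $i$, the collection of complements to $W_i$ selected by the $W_j$, $j \neq i$, can be assembled into a totally symmetric decomposition system, from which one extracts $k$ endomorphisms in $\End(\C^n)$ whose $\lambda$-eigenspace data recovers $\mathcal W$. The bound of \Cref{theorem:sizebound}, available at dimension $n$ only through the joint induction, then yields $k \leq n+1 = 2d+1$. For the equality case, the rigidity of the realization map (\Cref{lemma:stabstd}, \Cref{lemma:mustbestd}) pins the associated projective representation of $\Sigma_k$ on $\C^{2d}$ to one of a short list of irreducible projective representations; a character-theoretic comparison eliminates $d \geq 3$ and matches $d = 2$, $k = 5$ uniquely with the basic representation of $\tilde \Sigma_5$, the explicit verification being carried out in \Cref{appendix:s5}. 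The main obstacle is precisely this last step: assembling the decomposition system from a merely complementary family is delicate, and the sharp bound plus the identification of the sporadic $\tilde \Sigma_5$-configuration genuinely require crossing over to the endomorphism-theoretic side of the induction rather than reasoning purely within arrangements.
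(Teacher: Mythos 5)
Your high-level architecture matches the paper's: a joint strong induction packaging the endomorphism bounds with the arrangement bounds, the reduction to reduced spanning arrangements, the restriction/quotient dichotomy for $1 < d < n-1$, and isolating the half-dimensional complementary case $n = 2d$ with $W_i \oplus W_j = \C^n$. However, your treatment of that last case has two genuine problems, and it is precisely the place where all the difficulty lives.

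First, the claimed construction ``assemble the complements to $W_i$ selected by the $W_j$ into a decomposition system, then apply the eigenspace construction to get $k$ endomorphisms of $\C^n$'' is not a well-defined construction: a decomposition system requires one canonical direct-sum decomposition $\C^n = \bigoplus_j W_{i,j}$ per index $i$, but each $W_i$ has $k-1$ complements among the $W_j$ with no canonical choice, and neither intersecting nor summing them gives anything useful. Second, and more fundamentally, even if such a set of endomorphisms could be built, applying \Cref{theorem:sizebound} in $\End(\C^n)$ creates a circularity in the joint induction: the endomorphism bound $\mathbf{SB}(n)$ for $\End(\C^n)$ is itself established by examining eigenspace arrangements in $\C^n$, so it sits logically \emph{after} the arrangement bound in $\C^n$ that you are trying to prove. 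The paper sidesteps both issues with \Cref{lemma:coords} and \Cref{lemma:halfdim}: choose coordinates so that $W_1, W_2, W_3$ are standard and $W_i = \binom{A_i}{I}$ for $i \ge 4$; then $\{A_i\}_{i\ge 4}$ is a totally symmetric set of size $k-3$ in $\GL_d(\C)$ with $d = n/2$, strictly smaller than $n$, so $\mathbf{SB}(d)$ is already available from lower dimensions and gives $k \le d+4$. This is the key idea you are missing. Finally, the elimination of $d \ge 3$ is not character-theoretic in the paper: for $d \ge 4$ it is the arithmetic inequality $d+4 < 2d+1$; for $d = 3$ it is an eigenvalue-symmetry argument (\Cref{lemma:evals}, \Cref{lemma:halfdimd3}); and for $d = 2$ it is the explicit matrix computations of \Cref{lemma:d2} and \Cref{lemma:d2unique} identifying $\mathcal W_{\tilde\Sigma_5}$.
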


These results are intertwined with each other - to classify totally symmetric arrangements, one must understand a classification of (smaller-dimensional) totally symmetric sets, and vice versa. This necessitates a single induction with multiple hypotheses. We enumerate these statements below. As a mnemonic, {\bf S} indicates a statement about totally symmetric {\bf S}ets, (with {\bf cS} denoting a statement about {\bf c}ommutative totally symmetric sets), {\bf A} indicates a statement about totally symmetric {\bf A}rrangements, {\bf B} indicates a statement about {\bf B}ounds, and {\bf C} indicates a {\bf C}lassification statement. Note also that the indexing on the {\bf A} statements is ahead by one relative to the {\bf S} statements.

\begin{itemize}
    \item \AB{n}: Let $\mathcal W = \{W_1, \dots, W_k\} \subset \C^{n+1}$ be a nondegenerate totally symmetric arrangement. Then $k \le n+2$.
    \item \AC{n}: Let $\mathcal W = \{W_1, \dots, W_{n+2}\} \subset \C^{n+1}$ be a nondegenerate totally symmetric arrangement of size $k = n+2$. Then $\mathcal W$ is either the simplex arrangement or its dual, or else $n+1 = 5$ and $\mathcal W$ is the $\tilde \Sigma_5$ arrangement of \Cref{example:s5tilde}.
    \item \SB{n}: Let $\mathcal A = \{A_1,\dots, A_k\}\subset \GL_n(\C)$ be a totally symmetric set. Then $k \le n+1$, and if $\mathcal A$ is commutative, then $k \le n$.
    \item \SC{n}: Let $\mathcal A = \{A_1,\dots, A_{n+1}\}\subset \GL_n(\C)$ be a totally symmetric set of size $k = n+1$. Then $\mathcal A$ arises from a decomposition system associated to one of the maximal arrangements described in \AC{n-1}.
    \item \cSC{n}: Let $\mathcal A = \{A_1,\dots, A_{n}\}\subset \GL_n(\C)$ be a commutative totally symmetric set of size $k = n$. Then $\mathcal A$ arises via the standard construction, the simplex construction, or else $n=4$ and $\mathcal A$ is the sporadic example of \Cref{lemma:4}.
    \item \IH{n}: The {\em total induction hypothesis} \IH{n} consists of all of the above statements for all $m \le n$.
\end{itemize}

The statements \AB{n} - \cSC{n} are addressed over the course of \Cref{section:sb,section:sc,section:csc,section:abac} (as \Cref{prop:sbn,prop:sc,prop:csc,prop:ab,prop:ac}, respectively), together completing the inductive step of the argument. Here we treat the base case $n = 1$.

\begin{proposition}\label{prop:base}
The statements \AB{1}, \AC{1}, \SB{1}, \SC{1}, \cSC{1} hold.
\end{proposition}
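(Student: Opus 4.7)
The plan is to observe that each of the five statements either collapses to a triviality or follows immediately from \Cref{proposition:simpmax}. I would split the verification into the three endomorphism statements (\SB{1}, \SC{1}, \cSC{1}) and the two arrangement statements (\AB{1}, \AC{1}).

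For the endomorphism statements, the starting point is that $\GL_1(\C) = \C^\times$ is abelian, so conjugation by any element is trivial. Hence if $\mathcal A = \{A_1, \dots, A_k\} \subset \GL_1(\C)$ is a totally symmetric set with realization map $\rho$, the defining condition $\rho(\sigma) A_i \rho(\sigma)^{-1} = A_{\sigma(i)}$ forces $A_{\sigma(i)} = A_i$ for every $\sigma$ and $i$. By the collision-implies-collapse principle (\Cref{remark:cimpc}), $\mathcal A$ is a singleton, so $k = 1$. This immediately establishes \SB{1} (in its stronger commutative form, which in particular implies the noncommutative bound); it makes \SC{1} vacuously true, since no $2$-element totally symmetric set can exist in $\GL_1(\C)$; and it reduces \cSC{1} to the trivial observation that a single scalar $\{\nu\} \subset \C^\times$ is the degenerate standard construction $\mathcal A_0^{std}$ of \Cref{example:standard}.

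For the arrangement statements, I would note that in $\C^2$ only $d = 1$ is interesting: the Grassmannians $\Gr_0(\C^2)$ and $\Gr_2(\C^2)$ are singletons, so any totally symmetric arrangement of $0$- or $2$-planes is automatically degenerate. Thus $\mathcal W$ is an arrangement of lines in $\C^2$, and both claims reduce to direct citation of \Cref{proposition:simpmax}: the cardinality bound $k \le 3$ and, in the extremal case $k = 3$, the identification of $\mathcal W$ with the simplex arrangement $\mathcal S_2$ or its dual $\mathcal S_2^*$. The sporadic $\tilde \Sigma_5$ case is ruled out by the dimension constraint $n + 1 = 2 \neq 5$.

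There is essentially no obstacle at $n = 1$; the only care needed is a consistent reading of the degenerate parameter values of the standard and simplex constructions, which is a bookkeeping matter rather than a mathematical one.
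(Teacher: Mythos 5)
Your proposal is correct and takes essentially the same route as the paper: use Proposition~\ref{proposition:simpmax} for the line arrangements in $\C^2$ to get \AB{1} and \AC{1}, use abelianness of $\GL_1(\C)$ to force any totally symmetric set to be a singleton (giving \SB{1} and making \SC{1} vacuous), and observe that the singleton is the degenerate standard construction for \cSC{1}. The extra remarks about $\Gr_0$, $\Gr_2$, and collision-implies-collapse are fine but not new in substance.
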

\begin{proof}
The statements \AB{1} and \AC{1} concern totally symmetric arrangements in $\C^2$. Such arrangements are necessarily arrangements of lines, and so \Cref{proposition:simpmax} in the case $n = 2$ specializes to give \AB{1} and \AC{1}. For \SB{1}, as $\GL_1(\C)$ is abelian, any totally symmetric set is a singleton. The statement \SC{1} is vacuous. The statement \cSC{1} also holds: the singleton totally symmetric set can be viewed as arising from the standard construction for $n = 1$. 
\end{proof}

To finish this section, we verify that the base case \Cref{prop:base} together with the inductive steps \Cref{prop:sbn,prop:sc,prop:csc,prop:ab,prop:ac} do indeed prove \Cref{theorem:sizebound}, \Cref{maintheorem:max}, and \Cref{theorem:tsabound}, packaged together as the assertion \IH{n}.

\begin{proof}[Proof of \IH{n}]
We proceed by induction, the case $n = 1$ holding from \Cref{prop:base}. Assuming \IH{n-1}, \Cref{prop:sbn} establishes \SB{n}. The assertion \SC{n} then follows from \IH{n-1}, \SB{n}, and \Cref{prop:sc}, and \cSC{n} follows from \IH{n-1} and \Cref{prop:csc}. Finally, \AB{n} and \AC{n} follow from \IH{n-1}, \SB{n}, \SC{n}, and \Cref{prop:ab}, resp. \Cref{prop:ac}.
\end{proof}

\section{{\bf AB} and {\bf AC}: Maximal totally symmetric arrangements}\label{section:abac}

We first address the statements \AB{n} and \AC{n} concerning bounds and classifications of totally symmetric arrangements. The basic strategy is as follows: if $\mathcal W = \{W_1, \dots, W_k\}$ is a totally symmetric arrangement and $W_i \cap W_j$ is positive-dimensional, then $\{W_1 \cap W_i\}$ is a totally symmetric arrangement in $W_1$, to which induction can be applied; in particular, this applies whenever $2 \dim(W_i) > n$. If $2 \dim(W_i) < n$, then one can pass to the dual arrangement and apply this technique there. This leaves the case where $W_i \oplus W_j = \C^n$ as an exception. We will require separate techniques in this setting; we begin the section with an analysis of this situation.

\subsection{Half-dimensional arrangements}
Here we undertake a study of arrangements $\mathcal W$ with the property that $W_i \oplus W_j = \C^n$. The major objectives are \Cref{lemma:halfdim,lemma:d2,lemma:halfdimd3}, which give bounds on the sizes of such arrangements - \Cref{lemma:halfdim} establishes a general bound, and \Cref{lemma:d2} and \Cref{lemma:halfdimd3} improve this bound in the low dimensions $n = 4, 6$, respectively. In the case $n = 4$ we see that the only such arrangement is the $\tilde \Sigma_5$ arrangement.
\begin{lemma}
\label{lemma:coords}
Let $W_1, \dots, W_k \subset \C^{2d}$ be $d$-planes such that $W_i \oplus W_j = \C^{2d}$ for any pair of distinct indices $i\ne j$. Then there exist coordinates on $\C^{2d}$ in which $\{W_i\}$ admit $\GL_d(\C)$-coset representatives of the following form:
\[
W_1 = \binom{I}{0}, \qquad W_2 = \binom{0}{I}, \qquad W_3 = \binom{I}{I}, \qquad W_i = \binom{A_i}{I}\ (i \ge 4)
\]
with $A_i$ invertible for $i \ge 4$ (each indicated block is of size $d \times d$). Moreover, for any $M \in \GL_d(\C)$, the action of $M \oplus M$ on $\{W_i\} \subset \C^{2d}$ admits representatives with $W_1,W_2,W_3$ as above and with $A_i$ replaced with $MA_iM^{-1}$ for $i \ge 4$.
\end{lemma}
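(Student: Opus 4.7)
\medskip

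The plan is a direct normal-form argument in block coordinates coming from the splitting $\C^{2d} = W_1 \oplus W_2$, followed by a calculation of the residual gauge freedom.

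First I would choose a basis of $W_1$ and a basis of $W_2$ and concatenate them; since $W_1 \oplus W_2 = \C^{2d}$, this is a basis of the ambient space, and in these coordinates $W_1$ and $W_2$ admit the $\GL_d(\C)$-coset representatives $\binom{I}{0}$ and $\binom{0}{I}$ respectively. Next, using the hypothesis $W_3 \cap W_1 = W_3 \cap W_2 = 0$, I would argue that any coset representative $\binom{B}{C}$ of $W_3$ has both $B$ and $C$ invertible (vectors in the kernel of $B$, resp.\ $C$, would exhibit nontrivial elements of $W_3 \cap W_2$, resp.\ $W_3 \cap W_1$). Right-multiplying by $C^{-1}$, we can thus write $W_3 = \binom{E}{I}\GL_d(\C)$ for an invertible $E$.

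The next step is to exploit the freedom of block-diagonal changes of basis $M_1 \oplus M_2$, which visibly preserve $W_1$ and $W_2$. Under such a transformation, $\binom{E}{I}$ represents the subspace $\binom{M_1 E M_2^{-1}}{I}\GL_d(\C)$, so taking $M_1 = I$ and $M_2 = E$ (or any solution of $M_1 E = M_2$) brings $W_3$ into the form $\binom{I}{I}$. The same transversality argument as for $W_3$ shows that each remaining $W_i$ ($i \ge 4$) admits a representative of the form $\binom{A_i}{I}$ with $A_i \in \GL_d(\C)$; this establishes the first assertion.

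For the moreover part, I would compute the pointwise stabilizer in $\GL_{2d}(\C)$ of the ordered triple $(W_1, W_2, W_3)$ in the normalized coordinates. Stabilizing $W_1$ and $W_2$ forces block-diagonal form $M_1 \oplus M_2$, and stabilizing $W_3 = \binom{I}{I}\GL_d(\C)$ further forces $M_1 = M_2 = M$ (since $\binom{M_1}{M_2} = \binom{I}{I} N$ requires $M_1 = N = M_2$). Hence the residual gauge group is exactly $\{M \oplus M : M \in \GL_d(\C)\}$. Finally, applying $M \oplus M$ to $\binom{A_i}{I}$ gives $\binom{M A_i}{M}\GL_d(\C) = \binom{M A_i M^{-1}}{I}\GL_d(\C)$, so $A_i$ is replaced by $M A_i M^{-1}$ as claimed.

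The argument is essentially linear-algebraic bookkeeping; the only place where one must be careful is in juggling the left action of $\GL_{2d}(\C)$ on the column-vector representatives with the right action of $\GL_d(\C)$ giving equality of cosets, which I expect to be the main source of sign- and side-errors rather than genuine difficulty.
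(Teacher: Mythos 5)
Your proof is correct and follows essentially the same line as the paper's: set up block coordinates from the splitting $W_1 \oplus W_2$, observe that transversality forces the block entries of the remaining representatives to be invertible, normalize $W_3$ to $\binom{I}{I}$ by a block-diagonal change of basis, and check how $M \oplus M$ acts on the normalized representatives. The one minor difference is that you also compute that $\{M\oplus M : M \in \GL_d(\C)\}$ is exactly the residual gauge group stabilizing $(W_1, W_2, W_3)$, whereas the paper only needs and states the forward direction (that such $M\oplus M$ exist and act by conjugation on the $A_i$); this extra precision is harmless and mildly clarifying.
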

\begin{proof}
Choose a basis $\beta_1$ for $W_1$ and $\beta_2$ for $W_2$. As $W_1 \oplus W_2 = \C^{2d}$, the concatenation of $\beta_1$ and $\beta_2$ forms a basis for $\C^{2d}$. In any such basis, $W_1$ admits the coset representative $\binom{I}{0}$, and $W_2$ admits the representative $W_2 = \binom{0}{I}$. Choose a coset representative $\binom{A}{B}$ for $W_3$. Following the above discussion, since $W_1\cap W_3 = W_2 \cap W_3 = \{0\}$, each of $A, B$ is invertible. Then in the basis $A\beta_1 \cup B \beta_2$, each $W_i$ for $i \le 3$ admits the desired coset representatives.

The remaining elements $W_i$ for $i \ge 4$ admit coset representatives of the form $W_i = \binom{B_i}{C_i}$; again by the above discussion, each $B_i, C_i$ is invertible. Adjusting the coset representative by $C_i^{-1}$ and defining $A_i = B_i C_i^{-1}$ represents each $W_i$ in the required form.

To establish the final assertion, we observe that acting on $\C^{2d}$ via $M\oplus M$ and on $\C^{d}$ via $M^{-1}$ fixes the representatives for $W_1, W_2, W_3$ while sending $\binom{A_i}{I}$ to $\binom{M A_i M^{-1}}{I}$ for $i \ge 4$. 
\end{proof}

\begin{lemma}\label{lemma:halfdim}
Let $\mathcal W = \{W_1, \dots, W_k\}$ be a maximal totally symmetric arrangement of $d$-planes in $\C^{2d}$ with the property that $\C^{2d} = W_i \oplus W_j$ for any pair of distinct indices $i \ne j$. Then in the coordinates of \Cref{lemma:coords}, the matrices $A_i$ for $i \ge 4$ form a totally symmetric set in $\GL_d(\C)$ of size $k-3$. In particular, letting $M(d)$ denote the maximal cardinality of a (non-commutative) totally symmetric set in $\End(\C^d)$, then $k \le M(d) + 3$.
\end{lemma}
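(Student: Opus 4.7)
The plan is to extract a totally symmetric set structure on $\{A_4,\dots, A_k\}$ from the realization maps of $\mathcal W$ applied to permutations fixing $\{1,2,3\}$ pointwise. The coordinates of \Cref{lemma:coords} are engineered precisely so that the stabilizer in $\GL_{2d}(\C)$ of $W_1, W_2, W_3$ is identified with the diagonally embedded copy of $\GL_d(\C)$, so that the realization maps for such permutations will act on the $A_i$ by simultaneous conjugation.

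In detail, fix the coordinates of \Cref{lemma:coords}, giving coset representatives $W_1 = \binom{I}{0}$, $W_2 = \binom{0}{I}$, $W_3 = \binom{I}{I}$, and $W_i = \binom{A_i}{I}$ for $i \ge 4$, with each $A_i$ invertible. Given any permutation $\tau \in \Sym(\{4,\dots,k\})$, extend $\tau$ by the identity on $\{1,2,3\}$ and let $P_\tau \in \GL_{2d}(\C)$ be a realization in $\mathcal W$. Preservation of $W_1 = \binom{I}{0}$ and $W_2 = \binom{0}{I}$ forces $P_\tau$ to be block diagonal, say $P_\tau = \mathrm{diag}(M_1, M_2)$; preservation of $W_3 = \binom{I}{I}$ then forces $M_1 = M_2$. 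Calling this common value $M = M(\tau) \in \GL_d(\C)$, the second assertion of \Cref{lemma:coords} says that $P_\tau$ replaces $\binom{A_i}{I}$ by $\binom{M A_i M^{-1}}{I}$, which as a coset representative must coincide with $\binom{A_{\tau(i)}}{I}$ (since two coset representatives of the form $\binom{B}{I}$ span the same $d$-plane only if $B$ agrees). Hence $M A_i M^{-1} = A_{\tau(i)}$, and $\tau \mapsto M(\tau)$ is a set-theoretic realization map exhibiting $\{A_4, \dots, A_k\} \subset \GL_d(\C)$ as a totally symmetric set under the conjugation action.

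It remains to check that this totally symmetric set has the full cardinality $k-3$, i.e.\ that the $A_i$ are distinct for distinct $i \ge 4$. This is immediate from the coset representative observation above: if $A_i = A_j$ then $\binom{A_i}{I} = \binom{A_j}{I}$ span the same $d$-plane, so $W_i = W_j$, violating nondegeneracy of $\mathcal W$ (which follows from its maximality, appealing to ``collision implies collapse'' from \Cref{remark:cimpc}). The final bound $k \le M(d) + 3$ is then the definition of $M(d)$.

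I don't anticipate any real obstacle; the whole argument is engineered by \Cref{lemma:coords}, whose point is precisely that with $W_1, W_2, W_3$ normalized this way, the realization maps fixing them reduce to the diagonal $\GL_d(\C)$-action on the remaining $A_i$. The only subtlety worth flagging is ensuring that one works with \emph{realization maps} that fix $\{1,2,3\}$ setwise and pointwise -- this is why we only extract total symmetry of the $A_i$ under permutations of $\{4,\dots,k\}$ rather than under a larger symmetric group action.
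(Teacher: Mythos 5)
Your proof is correct and follows the same approach as the paper: you use permutations fixing $\{1,2,3\}$ pointwise, deduce from the normalized $W_1, W_2, W_3$ that the realizing matrix is a diagonally-repeated $\GL_d(\C)$ block, and read off simultaneous conjugation on the $A_i$. Your appeal to maximality and ``collision implies collapse'' for distinctness of the $A_i$ is slightly roundabout -- the hypothesis $\C^{2d} = W_i \oplus W_j$ already forces $W_i \neq W_j$ directly -- but this does not affect correctness.
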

\begin{proof}
By \Cref{lemma:coords}, we can assume that $\{W_i\}$ admit coset representatives of the form
\[
W_1 = \binom{I}{0}, \qquad W_2 = \binom{0}{I}, \qquad W_3 = \binom{I}{I}, \qquad W_i = \binom{A_i}{I}\ (i \ge 4)
\]
with $A_i$ invertible. We will show that $\{A_i\}$ forms a totally symmetric set in $\GL_d(\C)$ of size $k - 3$.

To see this, let $\sigma \in \Sigma_k$ fix the elements $1,2,3$ pointwise. Total symmetry of $\{W_i\}$ implies the existence of $P_\sigma \in \GL(V)$ such that $W_{\sigma(i)} = P_\sigma W_i$ as $\GL_d(\C)$-cosets. For $\sigma$ fixing $1,2,3$, this causes $P_\sigma$ to be of the form
\[
P_\sigma = \begin{pmatrix} X_\sigma & \\ & X_\sigma \end{pmatrix}
\]
for some $X_\sigma \in \GL_d(\C)$. For $i \ge 4$, the equation $W_{\sigma(i)} = P_\sigma W_i$ implies that
\[
\binom{A_{\sigma(i)}}{I} = \binom{X_\sigma A_i}{X_\sigma} Y
\]
for some $Y \in \GL_d(\C)$. Consequently $Y = X_\sigma^{-1}$, showing that $\{A_i\mid i \ge 4\}$ is a totally symmetric set in $\GL_d(\C)$ of size $k-3$ as claimed. 
\end{proof}

\para{Exceptional cases; $\mathbf{d=2}$} \Cref{lemma:halfdim} establishes a bound on  the size of certain totally symmetric arrangements in $\C^{2d}$ in terms of the size of a totally symmetric set in $\End(\C^d)$. We will shortly see that for $d \ge 4$ this bound is sufficient to establish \AB{2d-1} and \AC{2d-1}, but some additional work is required to address the cases $d \le 3$. 

In \Cref{subsection:halfdim}, we will specialize the above analysis to the case of $d=2$, i.e. the setting of the $\tilde \Sigma_5$ arrangement. We will obtain the following results:

\para{\Cref{lemma:d2}} {\em Let $\mathcal W = \{W_1, \dots, W_k\}$ be a nondegenerate totally symmetric arrangement of $2$-planes in $\C^4$ with the property that $\C^4 = W_i \oplus W_j$ for any pair of distinct indices $i \ne j$. Assume the classification \SC{2} of totally symmetric sets of $3$ elements in $\GL_2(\C)$. Then $k \le 5$.}\\

\para{\Cref{lemma:d2unique}} {\em In the setting of \Cref{lemma:d2}, if $\mathcal W = \{W_1, \dots, W_5\}$ is of maximal size, then $\mathcal W$ is the $\tilde \Sigma_5$ arrangement.}\\

\para{\Cref{corollary:reals5}} {\em There is a unique projective class $\overline \rho: \Sigma_5 \to \GL_4(\C) \to \PGL_4(\C)$ of realization map for $\mathcal W_{\tilde \Sigma_5}$, realized by the basic projective representation of $\Sigma_5$.}\\

 \para{Half-dimensional arrangements - the case $\mathbf{d = 3}$} 
 Finally, we consider the other exceptional case $d=3$. We begin with an analysis of the eigenvalues of the totally symmetric set $\{A_i\mid i \ge 4\}$; this will also be used in \Cref{subsection:halfdim}.
 
 \begin{lemma}\label{lemma:evals}
Let $\mathcal W = \{W_1, \dots, W_k\}$ be a maximal totally symmetric arrangement of $d$-planes in $\C^{2d}$ with the property that $\C^{2d} = W_i \oplus W_j$ for any pair of distinct indices $i \ne j$, and let $\{A_4, \dots, A_k\} \subset \GL_d(\C)$ denote the associated totally symmetric set of \Cref{lemma:halfdim}. Let $\Lambda = \{\lambda_1, \dots, \lambda_d\}$ denote the set of eigenvalues of any $A_i$, counted with multiplicity. Then $\Lambda$ is invariant under the involutions $\alpha(\lambda) = \lambda^{-1}$ and $\beta(\lambda) = 1 - \lambda$.
\end{lemma}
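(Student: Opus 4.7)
The plan is to exploit total symmetry with respect to the two transpositions $\sigma = (1,2)$ and $\sigma = (2,3)$ in $\Sigma_k$, deriving in each case an explicit invertible matrix that conjugates $A_i$ (for $i \ge 4$) to $A_i^{-1}$ and to $I - A_i$, respectively. Since conjugate matrices share the same characteristic polynomial, these two similarities yield invariance of the multiset $\Lambda$ under $\alpha(\lambda)=\lambda^{-1}$ and $\beta(\lambda)=1-\lambda$. Throughout I work in the normalized coordinates of \Cref{lemma:coords}, so that $W_1 = \binom{I}{0}$, $W_2 = \binom{0}{I}$, $W_3 = \binom{I}{I}$, and $W_i = \binom{A_i}{I}$ for $i \ge 4$.

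First I treat $\sigma = (1,2)$, which fixes every index $\ge 3$. Writing the realization matrix in block form $P_\sigma = \begin{pmatrix} X & Y \\ Z & W \end{pmatrix}$, the three conditions $P_\sigma W_1 = W_2$, $P_\sigma W_2 = W_1$, and $P_\sigma W_3 = W_3$, interpreted as equalities of $\GL_d(\C)$-cosets, force $X = W = 0$ and $Y = Z$ invertible. Applying the resulting $P_\sigma = \begin{pmatrix} 0 & Y \\ Y & 0 \end{pmatrix}$ to $W_i$ yields $\binom{Y}{Y A_i}$; matching this against a right translate $\binom{A_i N}{N}$ of the coset for $W_i$ forces $N = Y A_i$ and hence $A_i Y A_i = Y$, i.e.\ $Y^{-1} A_i Y = A_i^{-1}$. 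This witnesses $A_i \sim A_i^{-1}$ and gives invariance of $\Lambda$ under $\alpha$.

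Next I treat $\sigma = (2,3)$, which fixes $1$ and every $i \ge 4$. The conditions $P_\sigma W_1 = W_1$, $P_\sigma W_2 = W_3$, $P_\sigma W_3 = W_2$ force $P_\sigma$ to take the form $\begin{pmatrix} -Z & Z \\ 0 & Z \end{pmatrix}$ for some invertible $Z$. Applying this to $W_i$ produces $\binom{Z(I - A_i)}{Z}$, and requiring this to lie in the coset of $W_i = \binom{A_i}{I}$ gives $Z^{-1} A_i Z = I - A_i$. Hence $A_i \sim I - A_i$, and $\Lambda$ is invariant under $\beta$.

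I do not anticipate a genuine obstacle here: once \Cref{lemma:coords} is in hand, the block form of each $P_\sigma$ is rigidly determined by its action on the three ``pinned'' subspaces $W_1, W_2, W_3$, and the condition $P_\sigma W_i = W_i$ then reads off the desired conjugation identity. The only point requiring real care is the bookkeeping with $\GL_d(\C)$-cosets, namely identifying the correct right multiplier $N$ in $P_\sigma M_i = M_i N$ before extracting the conjugation relation; after that the statement on eigenvalue multisets follows immediately from the invariance of characteristic polynomials under similarity.
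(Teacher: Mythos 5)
Your proof is correct and follows essentially the same route as the paper: both derive the block forms of the realization matrices $P_{(12)}$ and $P_{(23)}$ from their required action on the pinned subspaces $W_1, W_2, W_3$, and then read off the conjugation relations $A_i \sim A_i^{-1}$ and $A_i \sim I - A_i$ from the condition $P_\sigma W_i = W_i$ for $i \ge 4$. The only cosmetic difference is the choice of free variable in the block form of $P_{(23)}$ (your $-Z$ is the paper's $X_{23}$), which does not affect the argument.
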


\begin{proof}
We make the following two observations, from which the result follows immediately.

\para{Observation 1} {\em Each matrix $A_i$ is conjugate to its inverse.}

To see this, we consider the element $(12) \in \Sigma_k$ and a realization $P_{12} \in \GL_{2d}(\C)$. Considering the effect on $W_1, W_2, W_3$ shows that 
\[
P_{12} = \begin{pmatrix}  & X_{12}\\X_{12}&\end{pmatrix}
\]
for some $X_{12} \in \GL_{d}(\C)$. As this must fix $W_i$ for $i \ge 4$, it follows that there is $Y_{12,i} \in \GL_d(\C)$ such that
\[
\begin{pmatrix}  & X_{12}\\X_{12}&\end{pmatrix} \begin{pmatrix}A_i \\ I \end{pmatrix} = \begin{pmatrix}A_i Y_{12,i} \\ Y_{12,i} \end{pmatrix},
\]
which shows $X_{12} = A_i Y_{12,i}$ and $X_{12}A_i = Y_{12,i}$. Together these give the required conjugacy.

\para{Observation 2} {\em Each matrix $A_i$ is conjugate to $I - A_i$.}

For this, we consider the action of $(23) \in \Sigma_k$ via a realization $P_{23} \in \GL_{2d}(\C)$. Considering the effect on $W_1, W_2, W_3$ shows that 
\[
P_{23} = \begin{pmatrix} X_{23} & -X_{23}\\&-X_{23}\end{pmatrix}
\]
for some $X_{23} \in \GL_d(\C)$. As this must fix $W_i$ for $i \ge 4$, it follows that there is $Y_{23,i} \in \GL_d(\C)$ such that
\[
\begin{pmatrix} X_{23} & -X_{23}\\&-X_{23}\end{pmatrix} \begin{pmatrix}A_i \\ I \end{pmatrix} = \begin{pmatrix}A_i Y_{23,i} \\ Y_{23,i} \end{pmatrix}.
\]
This shows that $Y_{23,i} = -X_{23}$ and that $X_{23}(A_i-I) = A_i Y_{23,i}$, which combine to give the claim.
\end{proof}
 
 \begin{lemma}\label{lemma:halfdimd3}
 Let $\mathcal W = \{W_1, \dots, W_k\}$ be a nondegenerate totally symmetric arrangement of $3$-planes in $\C^6$ with the property that $\C^6 = W_i \oplus W_j$ for any pair of distinct indices $i \ne j$. Assume the classification \SC{3} of totally symmetric sets of $4$ elements in $\GL_3(\C)$. Then $k \le 6$.
 \end{lemma}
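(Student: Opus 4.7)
The plan is to argue by contradiction: suppose $k \ge 7$ and derive an impossibility by combining \SC{3} with the eigenvalue constraints of \Cref{lemma:evals}. By \Cref{lemma:halfdim} applied with $d = 3$, one can choose coordinates in which the matrices $A_4, \dots, A_k$ form a nondegenerate totally symmetric set in $\GL_3(\C)$ of cardinality $k-3$. The inductive hypothesis \SB{3} caps such cardinalities at $4$, which both forces $k \le 7$ and, if $k = 7$, produces a $4$-element totally symmetric set $\{A_4, A_5, A_6, A_7\} \subset \GL_3(\C)$. The task thus reduces to ruling out $k = 7$.

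Next I would invoke \SC{3}, which identifies this $4$-element totally symmetric set as arising (via the eigenspace construction) from a decomposition system attached to one of the maximal totally symmetric arrangements of size $4$ in $\C^3$. By \AC{2}, the only such arrangements are the simplex $\mathcal{S}_3$ and its dual $\mathcal{S}_3^*$ (the $\tilde\Sigma_5$ exception requires $n+1 = 5$). Both arrangements yield the same decomposition system $\mathcal{D}^3_{simp}$, namely $V_{std}^3 = \langle e_i\rangle \oplus \ker \alpha_i$ for each $i$, so the resulting totally symmetric set is, up to relabeling of eigenvalues, the noncommutative simplex construction of \Cref{example:ncsimplex}. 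In particular each $A_i$ has eigenvalue multiset $\Lambda = \{\lambda, \mu, \mu\}$ for some distinct $\lambda, \mu \in \C^{\times}$ (one eigenvalue on the line and one, with multiplicity two, on the hyperplane).

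I would finish by applying \Cref{lemma:evals}: $\Lambda$ must be invariant under $\alpha(z) = z^{-1}$ and $\beta(z) = 1-z$. Matching multiplicities, the unique eigenvalue of multiplicity $2$ in $\{\lambda^{-1}, \mu^{-1}, \mu^{-1}\}$ is $\mu^{-1}$, so $\alpha$-invariance forces $\mu^{-1} = \mu$, and then the remaining element gives $\lambda^{-1} = \lambda$; combined with $\lambda \ne \mu$, this yields $\{\lambda,\mu\} = \{1,-1\}$. But then $1 \in \Lambda$, so $\beta$-invariance would place $0 = \beta(1)$ in $\Lambda$, contradicting $A_i \in \GL_3(\C)$. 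This rules out $k = 7$, proving $k \le 6$.

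The main obstacle is the multiplicity bookkeeping in the last step: one must verify that no alternative matching of the multisets $\{\lambda,\mu,\mu\}$ and $\{\lambda^{-1},\mu^{-1},\mu^{-1}\}$ evades the conclusion $\mu^2 = 1$, but the doubled occurrence of $\mu$ leaves no room for a different pairing. The remainder is a clean concatenation of the inductive hypotheses \SB{3}, \SC{3}, and \AC{2} with the rigidity extracted from the transpositions $(12)$ and $(23)$ in \Cref{lemma:evals}.
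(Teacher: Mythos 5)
Your proof is correct and follows the same overall route as the paper: reduce via \Cref{lemma:coords} and \Cref{lemma:halfdim} to a $4$-element totally symmetric set in $\GL_3(\C)$, identify it via \SC{3} (and \AC{2}) as the noncommutative simplex construction, and derive a contradiction from the $\alpha,\beta$-invariance of \Cref{lemma:evals}. The only difference is cosmetic in the endgame: the paper notes that $\Lambda$ has odd cardinality and is $\beta$-invariant, so $\tfrac12 \in \Lambda$, and then the $\alpha,\beta$-orbit of $\tfrac12$ forces $\Lambda = \{\tfrac12, 2, -1\}$ (three distinct values, contradicting the two-eigenvalue structure), whereas you use the explicit $\{\lambda,\mu,\mu\}$ multiplicity pattern and $\alpha$-invariance to pin $\lambda,\mu$ to $\pm 1$, then invoke $\beta$-invariance to produce the forbidden eigenvalue $0$; both variants are valid.
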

 \begin{proof}
 Following \Cref{lemma:coords,lemma:halfdim}, we can express $\mathcal W$ as follows:
 \[
 W_1 = \binom{I}{0}, \qquad W_2 = \binom{0}{I}, \qquad W_3 = \binom{I}{I}, \qquad W_i = \binom{A_i}{I}\ (i \ge 4)
 \]
 with $\mathcal A = \{A_4, \dots, A_k\}$ forming a $(k-3)$-element totally symmetric set in $\GL_3(\C)$. If $k = 7$, then the hypothesis \SC{3} implies that $\mathcal A$ arises via the noncommutative simplex construction; in particular, $\mathcal A$ has exactly two distinct eigenvalues. On the other hand, by \Cref{lemma:evals}, the $3$-element set $\Lambda$ of eigenvalues with multiplicity is invariant under the involutions $\alpha(\lambda) = \lambda^{-1}$ and $\beta(\lambda) = 1 - \lambda$. For $\Lambda$ to have odd cardinality, it must contain the unique fixed point of $\beta$, and so $\tfrac 1 2 \in \Lambda$. The orbit of $\tfrac 1 2$ under $\alpha, \beta$ is the three-element set $\{\tfrac 1 2, 2, -1\}$, and so $\Lambda$ must be this set, contrary to the condition that $\Lambda$ contain only two distinct elements. 
 \end{proof}

 \subsection{Proofs of AB and AC}
 Recall the statements of \AB{n} and \AC{n}:
 \begin{itemize}
    \item \AB{n}: Let $\mathcal W = \{W_1, \dots, W_k\} \subset \C^{n+1}$ be a nondegenerate totally symmetric arrangement. Then $k \le n+2$.
    \item \AC{n}: Let $\mathcal W = \{W_1, \dots, W_{n+2}\} \subset \C^{n+1}$ be a nondegenerate totally symmetric arrangement of size $k = n+2$. Then $\mathcal W$ is either the simplex arrangement or its dual, or else $n+1 = 5$ and $\mathcal W$ is the $\tilde \Sigma_5$ arrangement of \Cref{example:s5tilde}. 
\end{itemize}

\begin{proposition} \label{prop:ab}
Assuming \IH{n-1} and \SB{n}, then also \AB{n} holds.
\end{proposition}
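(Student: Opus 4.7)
The strategy is to analyze $\mathcal W = \{W_1, \dots, W_k\} \subset \C^{n+1}$ by the dimension $d$ of the subspaces and the pairwise intersection dimension $e = \dim(W_i \cap W_j)$, which by \Cref{lemma:conjformula} is independent of the pair $i \ne j$. The plan is to reduce most configurations to a smaller totally symmetric arrangement accessible via \IH{n-1}, with \Cref{proposition:simpmax} handling the extremal dimensions and the lemmas of the previous subsection absorbing the half-dimensional case.

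First I would carry out two preliminary reductions. If $\mathcal W$ is reducible, i.e., $Q = \bigcap_i W_i \ne 0$, then by \Cref{lemma:nondegquot} the reduced arrangement $\mathcal W^{red}$ is a nondegenerate totally symmetric arrangement of size $k$ in $\C^{n+1-\dim Q}$, and \AB{n - \dim Q} from \IH{n-1} gives $k \le n - \dim Q + 2 \le n+1$. A dual reduction (replacing $\C^{n+1}$ with $\sum_i W_i$ if this is proper) allows me to assume that both $\mathcal W$ and $\mathcal W^*$ are reduced. The trivial cases $d \in \{1, n\}$ of lines or hyperplanes are immediate from \Cref{proposition:simpmax}, so I concentrate on $2 \le d \le n-1$.

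Suppose first that $e > 0$. Then $\{W_1 \cap W_i\}_{i \ne 1}$ is an arrangement of $e$-planes in $W_1 \cong \C^d$ with $k-1$ elements; it is totally symmetric under the stabilizer of $1$ in $\Sigma_k$, the realizations being the restrictions to $W_1$ of those $\rho(\sigma)$ that fix $W_1$. It is nondegenerate: otherwise \Cref{remark:cimpc} forces $W_1 \cap W_i = Q'$ for a single nonzero subspace $Q'$, and total symmetry of $\mathcal W$ then yields $W_i \cap W_j = Q' \subseteq \bigcap_i W_i = 0$, contradicting reducedness. So \AB{d-1} (applicable since $d - 1 \le n - 1$) gives $k - 1 \le d + 1$, hence $k \le d + 2 \le n+1$. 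If $e = 0$ and $2d < n+1$, the same argument applied to the (reduced) dual $\mathcal W^*$, whose pairwise intersections $(W_i + W_j)^*$ have positive dimension $n+1-2d$, yields $k \le (n+1-d) + 2 \le n+1$ using $d \ge 2$.

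The remaining case $e = 0$ and $2d = n+1$ is the half-dimensional configuration $W_i \oplus W_j = \C^{n+1}$, which is the main obstacle and is where the auxiliary work of the previous subsection enters. For $d = 2$ (so $n = 3$), \Cref{lemma:d2} gives $k \le 5 = n+2$ directly; its hypothesis \SC{2} is part of \IH{n-1}. For $d \ge 3$, \Cref{lemma:halfdim} produces a totally symmetric set in $\GL_d(\C)$ of cardinality $k-3$, and \SB{d} (in \IH{n-1}, since $d \le n-1$ when $d = (n+1)/2$ and $n \ge 3$) bounds its size by $d+1$; hence $k \le d+4 \le 2d+1 = n+2$ for $d \ge 3$. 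The delicate point is that \Cref{lemma:halfdim} combined with \SB{d} is sharp enough only from $d \ge 3$ onward, which is precisely why the classification \SC{2} had to be folded into \IH{n-1} and why \Cref{lemma:d2} was needed as a separate input.
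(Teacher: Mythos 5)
Your proof is correct and follows essentially the same strategy as the paper's: case-split on whether $W_i \cap W_j$ (or dually $W_i^* \cap W_j^*$) is positive-dimensional, apply $\AB{\dim(W_1)-1}$ to the restricted arrangement $\{W_1\cap W_i\}$, and absorb the half-dimensional case $2d = n+1$ via \Cref{lemma:halfdim}, $\SB{d}$, and \Cref{lemma:d2}. The one organizational difference is that you perform a preliminary reduction (quotienting by $\bigcap_i W_i$ and restricting to $\sum_i W_i$) so that the degeneracy of $\{W_1\cap W_i\}$ is ruled out outright, whereas the paper reaches the same point inline by passing to $\mathcal W^{red}$ when $\mathcal W|_{W_1}$ turns out degenerate; these are equivalent, and your version is slightly cleaner. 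Two small notational quibbles: the constancy of $e = \dim(W_i\cap W_j)$ is not literally \Cref{lemma:conjformula} (which concerns eigenspaces of endomorphisms) but follows directly from the definition of a totally symmetric arrangement and the $2$-transitivity of $\Sigma_k$; and in the nondegeneracy argument, the asserted equality $W_i\cap W_j = Q'$ for all pairs is slightly more than you need and slightly more than total symmetry immediately gives — it suffices to observe $Q' = W_1\cap W_i \subseteq W_i$ for every $i$, hence $Q' \subseteq \bigcap_i W_i = 0$.
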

\begin{proof}
Suppose that $\mathcal W = \{W_1, \dots , W_k\} \subset \C^{n+1}$ is a nondegenerate totally symmetric arrangement of $d$-planes; we will show that $k \le n+2$.

We first consider the case where $W_1 \cap W_2$ is positive-dimensional. Then restriction of the realization map shows that 
\[
\mathcal{W}|_{W_1} = \{W_1 \cap W_2, W_1 \cap W_3,\ldots, W_1 \cap W_k\}
\]
is totally symmetric in $W_1$. If $\mathcal{W}|_{W_1}$ is non-degenerate, \AB{\dim(W_1)-1} gives 
\[
k-1 \leq \dim(W_1)+1 \leq n+1,
\]
establishing \AB{n}.

In the case where $\mathcal{W}|_{W_1}$ is degenerate, let $Q$ be the common intersection $Q = W_1 \cap W_i$. By total symmetry, $Q = W_i \cap W_j$ for any pair of indices $i,j$. We construct the reduced arrangement $\mathcal W^{red} \subset \C^{n+1}/Q$ as in \Cref{definition:quotientarrangement}; by \Cref{lemma:nondegquot}, it is nondegenerate. Applying \AB{\dim(\C^{n+1}/Q)-1} to $\mathcal W^{red}$ gives $k \leq \dim(\C^{n+1}/Q) + 1 = n +2 - \dim(Q) < n + 2$, establishing \AB{n}. 


The same argument can be applied if $\dim(W_i) < n/2$, by passing to the dual arrangement $\mathcal W^*$, which guarantees that $\dim(W_i^* \cap W_j^*) > 0$. This leaves only the case where both $W_i \cap W_j$ and $W_i^* \cap W_j^*$ are zero-dimensional, which in particular implies that $n+1 = 2d$ and $W_i \oplus W_j = \C^{n+1}$ for any pair of indices $i,j$, i.e. the setting of \Cref{lemma:halfdim}. This implies that $k \le M(d)+3$, where $M(d)$ is the maximal size of a totally symmetric set in $\GL_d(\C)$. As $d = (n+1)/2 \le n$, we apply \SB{d} to conclude that $k \le d+4$. For $d \ge 3$, the inequality $d+4 \le 2d+1 = n+2$ holds, establishing \AB{n}. 

The case $n = 1$ corresponding to $d = 1$ was addressed in \Cref{prop:base} as a base case. It remains to analyze the case $n = 3$ corresponding to $d = 2$ - this is addressed by \Cref{lemma:d2}.
\end{proof}

\begin{proposition}\label{prop:ac}
Assuming \IH{n-1}, \SB{n}, and \SC{n}, then also \AC{n} holds.
\end{proposition}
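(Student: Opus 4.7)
The plan is to case-split on the common dimension $d = \dim W_i$ of the subspaces in the arrangement, distinguishing three regimes: $d > (n+1)/2$ (which will yield the dual simplex arrangement), $d < (n+1)/2$ (which will reduce to the first regime via duality, yielding the simplex arrangement), and $d = (n+1)/2$ (the exceptional half-dimensional case). This structure parallels the proof of \AB{n} in \Cref{prop:ab}, but now aims at a uniqueness statement rather than just a bound.

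For the first regime, every intersection $W_i \cap W_j$ has positive dimension, so $\mathcal W|_{W_1}$ is a totally symmetric arrangement of size $n+1$ inside $W_1 \cong \C^d$. I would first rule out the degenerate subcase: if $\mathcal W|_{W_1}$ collapses to a single subspace, then the common intersection $Q = \bigcap_i W_i$ has positive dimension, and the reduced arrangement $\mathcal W^{red}$---which is nondegenerate by \Cref{lemma:nondegquot}---has size $n+2$ in ambient dimension $n+1-\dim Q$, violating \AB{n-\dim Q}. In the nondegenerate subcase, \AB{d-1} (from \IH{n-1}) gives $n+1 \le d+1$, forcing $d = n$. Having identified $\mathcal W$ as a hyperplane arrangement, I would pass to the dual $\mathcal W^* \subset (\C^{n+1})^*$, which is a totally symmetric arrangement of $n+2$ lines; by \Cref{proposition:simpmax}, $\mathcal W^*$ is the simplex arrangement, so $\mathcal W$ is its dual. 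The second regime, $d < (n+1)/2$, follows by applying the first regime to $\mathcal W^*$ (whose subspaces have dimension $n+1-d > (n+1)/2$): we obtain $\mathcal W^*$ as the dual simplex, so $\mathcal W$ itself is the simplex arrangement.

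The third regime, $d = (n+1)/2$ (which requires $n$ odd), is the half-dimensional case $W_i \oplus W_j = \C^{n+1}$. Here \Cref{lemma:halfdim} extracts a totally symmetric set $\{A_4, \ldots, A_{n+2}\} \subset \GL_d(\C)$ of size $n-1$, and \SB{d} yields $n-1 \le d+1 = (n+3)/2$, hence $n \le 5$, so $n \in \{1,3,5\}$. The case $n=1$ is the base case \Cref{prop:base}; for $n=3$, \Cref{lemma:d2unique} (whose hypothesis \SC{2} is available from \IH{n-1}) identifies $\mathcal W$ as the $\tilde \Sigma_5$ arrangement; for $n=5$, \Cref{lemma:halfdimd3} bounds $k \le 6$, contradicting $k=7$. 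The main technical obstacle is the degenerate subcase of the first regime, where one must appeal to \Cref{lemma:nondegquot} to license the inductive bound \AB{n-\dim Q}; the remainder of the proof is a clean assembly of \Cref{lemma:halfdim,lemma:d2unique,lemma:halfdimd3} together with \Cref{proposition:simpmax}.
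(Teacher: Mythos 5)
Your proposal follows essentially the same strategy as the paper's proof: a three-way case split, with the first two cases resolved by \Cref{proposition:simpmax} and the half-dimensional case by \Cref{lemma:halfdim}, \Cref{lemma:d2unique}, and \Cref{lemma:halfdimd3}. The one genuine difference is how the case split is organized, and this introduces a small but real gap. You partition by the dimension $d$ relative to $(n+1)/2$, whereas the paper partitions by whether $W_i \cap W_j$ (or $W_i^* \cap W_j^*$) is nonzero. These are not equivalent: when $d = (n+1)/2$ nothing forces $W_i \oplus W_j = \C^{n+1}$; the intersections could still be positive-dimensional. Your third regime simply declares $W_i \oplus W_j = \C^{n+1}$ and proceeds to invoke \Cref{lemma:halfdim}, but that lemma's hypothesis has not been verified, so the subcase $d = (n+1)/2$ with $W_i \cap W_j \ne 0$ is unaddressed.

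The gap is easily closed, and closing it shows why the paper's intersection-based case split is cleaner: if $d = (n+1)/2$ and $W_i \cap W_j$ is positive-dimensional, the restriction argument of your first regime applies verbatim (it needs only nonzero intersections, not $d > (n+1)/2$), yielding $n+1 \le d+1 = (n+3)/2$ and hence $n \le 1$, which is the base case; so for $n \ge 3$ this subcase is vacuous and you may indeed assume $W_i \oplus W_j = \C^{n+1}$. With that patch, the rest of your argument — the arithmetic $n-1 \le (n+3)/2$ forcing $n \in \{1,3,5\}$, and the dispatch via \Cref{lemma:d2unique} (using $\SC{2} \subset \IH{n-1}$) and \Cref{lemma:halfdimd3} (using $\SC{3}$) — matches the paper. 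Your explicit handling of the degenerate subcase via the reduced arrangement and \Cref{lemma:nondegquot} is precisely what the paper compresses into the phrase "the argument of \Cref{prop:ab} shows that $\mathcal W|_{W_1}$ is nondegenerate." The only cosmetic difference in regime one is that you dualize to an arrangement of $n+2$ lines before applying \Cref{proposition:simpmax}, whereas the paper applies the hyperplane clause of that proposition directly; both are valid.
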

\begin{proof}
We consider a nondegenerate arrangement $W_1, \dots, W_{n+2} \subset \C^{n+1}$ of the maximal size permitted by \AB{n}. If $W_i \cap W_j$ is positive-dimensional for $i \ne j$, then the arrangement $\mathcal W|_{W_1} = \{W_1 \cap W_i \mid 2 \le i \le n+2\}$ is of cardinality $n+1$ in the space $W_1$. The argument of \Cref{prop:ab} shows that $\mathcal W|_{W_1}$ is nondegenerate. By \AB{\dim(W_1)-1}, it follows that $n+1 \le \dim(W_1) + 1$, so that $\dim(W_1) = n$, and the arrangement $\mathcal W$ is thus an arrangement of hyperplanes. By \Cref{proposition:simpmax}, $\mathcal W$ is the dual simplex arrangement.

Passing to the dual arrangement, if $W_i^* \cap W_j^*$ is positive-dimensional for $i \ne j$, then the same argument shows that $\mathcal W^*$ is the dual simplex arrangement, i.e. $\mathcal W$ is the simplex arrangement. Finally, we consider the case where both $W_i \cap W_j$ and $W_i^* \cap W_j^*$ are zero-dimensional, i.e. the setting of \Cref{lemma:halfdim}. Recall that \Cref{lemma:halfdim} asserts that the size $k$ of a totally symmetric arrangement of $d$-planes in $\C^{2d}$ satisfying $W_i \cap W_j = \{0\}$ is at most $M(d) + 3$, where $M(d)$ is the maximal size of a totally symmetric set in $\GL_d(\C)$. Appealing to \SB{d}, it follows that $k \le d+4$. For $d \ge 4$ the inequality $d+4 < 2d+1$ is sharp, leaving only the cases $d = 1,2,3$ to consider separately. 

If $d = \dim(W_i) = 1$, then $\mathcal W$ is an arrangement of $3$ lines in $\C^2$ which \Cref{proposition:simpmax} shows must be the simplex arrangement. If $d = 2$ (so that $n = 3$) since we are assuming \SC{2}, then \Cref{lemma:d2unique} asserts that $\mathcal W$ is the $\tilde \Sigma_5$ arrangement. Finally if $d = 3$ (in which case $n = 5$) since we assume \SC{3}, we appeal to \Cref{lemma:halfdimd3} to conclude that $k \le 6$ as required.
\end{proof}

\section{{\bf SB}: Bounds for totally symmetric sets}\label{section:sb}
We come next to the statement \SB{n} concerning bounds on the size of totally symmetric sets:
\begin{itemize}
    \item \SB{n}: Let $\mathcal A = \{A_1,\dots, A_k\}\subset \GL_n(\C)$ be a totally symmetric set. Then $k \le n+1$, and if $\mathcal A$ is commutative, then $k \le n$.
\end{itemize}

\begin{proposition}\label{prop:sbn}
Assuming \IH{n-1}, then also \SB{n} holds.
\end{proposition}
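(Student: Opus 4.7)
The plan is to extract from $\mathcal A$ a totally symmetric arrangement to which \AB{n-1} applies, or else to reduce via a quotient to \SB{n-1}; a sharpening via \AC{n-1} and \Cref{lemma:stabstd} handles the tighter commutative bound. I would first fix any eigenvalue $\lambda$ of $\mathcal A$ (shared by all $A_i$, since they are pairwise conjugate) and consider the family $\{E_\lambda^i\}_{i=1}^k$ of $\lambda$-eigenspaces; by \Cref{lemma:conjformula} this is a totally symmetric arrangement in $\C^n$.

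If $\{E_\lambda^i\}$ is nondegenerate, then \AB{n-1} immediately yields $k \le n+1$. For the commutative sharpening, the case $k = n+1$ would by \AC{n-1} force the arrangement to be the simplex, dual simplex, or $\tilde\Sigma_5$ arrangement; commutativity would then require each $A_j$ to stabilize every $E_\lambda^i$, but the common stabilizer of each such maximal arrangement is just the scalars (by \Cref{lemma:stabstd} in the simplex and dual simplex cases, and by the analogous $\tilde\Sigma_5$ computation), forcing each $A_j$ to be scalar and contradicting that $E_\lambda^i$ is a proper $\lambda$-eigenspace.

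If $\{E_\lambda^i\}$ is degenerate with common value $E_\lambda$, then $E_\lambda$ is $(\mathcal A, \rho)$-invariant; after excluding the trivial case $E_\lambda = V$ (which forces $k = 1$), I would pass to the quotient $\mathcal A/E_\lambda \subset \GL(V/E_\lambda)$. If this quotient is nondegenerate, then \SB{n-1} gives $k \le (n-\dim E_\lambda)+1 \le n$, settling both bounds. The principal difficulty is when $\mathcal A/E_\lambda$ is also degenerate: each $A_i - A_j$ sends $V$ into $E_\lambda$ and annihilates $E_\lambda$, so $(A_i - A_j)^2 = 0$. I would then introduce the $\rho$-invariant average $\bar A = k^{-1}\sum_i A_i$ and examine the subspaces $\{\mathrm{im}(A_i - \bar A)\}_{i=1}^k$, which form a totally symmetric arrangement inside $E_\lambda$; if nondegenerate, \AB{\dim E_\lambda - 1} gives $k \le \dim E_\lambda + 1 \le n$. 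If instead this arrangement has a common image $I \subsetneq E_\lambda$, then $I$ is itself $(\mathcal A, \rho)$-invariant -- it is $\rho$-invariant by construction, and $\mathcal A$-invariant because each $A_j$ acts as the scalar $\lambda$ on $E_\lambda \supseteq I$ -- and I would iterate using alternating image-arrangement and kernel-arrangement constructions to strictly shrink the invariant subspace at each step. The recursion must terminate either with a nondegenerate arrangement (yielding the bound via \AB) or with $A_i = \bar A$ for all $i$, contradicting $k \ge 2$.

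The hardest part will be this fully degenerate subcase: one must carry out the iterative construction with care, verifying at each stage that a genuine totally symmetric arrangement emerges and that invariance properties propagate, so that a nondegenerate arrangement surfaces before the ambient dimension is exhausted. A secondary subtlety is the commutative sharpening, which depends on the maximal-arrangement classification \AC{n-1} together with the stabilizer computations for the three extremal arrangements.
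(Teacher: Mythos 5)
Your opening moves parallel the paper's: examine the eigenspace arrangement $\{E_\lambda^i\}$ and apply \AB{n-1}; use \AC{n-1} together with \Cref{lemma:stabstd} (and \Cref{lemma:stabs5}) to sharpen to $k\le n$ in the commutative case; if the arrangement is degenerate, pass to the quotient. The averaging device $\bar A = k^{-1}\sum_i A_i$ is a clean $\rho$-invariant replacement for the paper's choice $A_1$ of base point, and the observation that $A_i - \bar A$ then factors through $V/E_\lambda \to E_\lambda$ is correct.

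The gap is in the final, fully degenerate subcase, and it is a real one. You write ``if instead this arrangement has a common image $I\subsetneq E_\lambda$,'' but there is no reason for $I$ to be a \emph{proper} subspace of $E_\lambda$; when $\dim E_\lambda \le n/2$ the common image can perfectly well be all of $E_\lambda$, and your iteration then produces no smaller invariant subspace. Worse, even when $I\subsetneq E_\lambda$ the objects you could iterate on are all degenerate: the restriction $\mathcal A|_I$ is scalar, and the quotient $\mathcal A/I$ is degenerate because $A_i - A_j$ already maps into $I$. So the ``alternating image-arrangement and kernel-arrangement constructions'' do not strictly shrink anything, and the claimed dichotomy at the end of the recursion does not arise. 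The missing idea, which the paper supplies in Phase~3 of its proof, is to pass from the degenerate kernel \emph{and} image arrangements (you need both degenerate to get a common kernel $K$ and common image $I$ for every $T_i$) to the isomorphisms $\overline{T_i}\colon V/K\to I$ and then form the \emph{derived totally symmetric set} of ratio operators $\{\overline{T_1}^{-1}\overline{T_i}\}\subset \GL(V/K)$. Because $I\subseteq E_\lambda\subseteq K$ and $\dim(V/K)=\dim I$, rank--nullity forces $\dim(V/K)\le \min(\dim E_\lambda,\,n-\dim E_\lambda)\le n/2$, so one may apply \SB{\dim(V/K)}. Your $\bar A$ normalization even gives a set of cardinality $k-1$ rather than the paper's $k-2$, which is a small improvement, but without actually constructing this derived set the argument does not close. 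Secondary remark: the paper also iterates over the whole generalized-eigenspace filtration $E_{\lambda,c}^i$ and invokes the Jordan inequalities \eqref{equation:jordan2} to bound $\dim(\C^n/\ker T)$ by $n/d$, which is how it navigates the low-dimensional sporadic cases ($n\le 5$ with $d=2,3$); if you replace that with the rank--nullity bound above you should verify separately that the resulting estimates are sharp enough for $n=2,3,4,5$.
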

\begin{proof}
We consider a totally symmetric set $\mathcal A = \{A_1, \dots, A_k\}$ in $\GL_n(\C)$; our objective is to show that $k \le n+1$. In broad outline, the proof proceeds by considering a suite of arrangements canonically attached to $\mathcal A$. If any of these arrangements are nondegenerate, one of the hypotheses in \IH{n-1} can be applied. If all of the arrangements considered are degenerate, we use this to construct a ``derived'' totally symmetric set on a proper quotient of $\C^n$, to which induction can be applied. For readability, we organize the proof into three phases, further splitting the latter two into the {\em construction} of the objects under study, and their {\em analysis}. 

As a preliminary remark, we observe that \Cref{corollary:permrigid} implies that if $\mathcal W$ is any totally symmetric arrangement associated to a totally symmetric set $\mathcal A \subset \End(\C^n)$, then either $\mathcal W$ is degenerate, or else $\abs{\mathcal A} = \abs{\mathcal W}$. In particular, the bounds on $\abs{\mathcal W}$ imposed by the hypotheses \AB{m} do impose bounds on $\abs{\mathcal A}$. 

\para{Phase 1: arrangements of generalized eigenspaces} Let $\lambda$ be an eigenvalue of $\mathcal A$, and consider the arrangements $E_{\lambda,c}^i$ of degree-$c$ generalized eigenspaces for $c \ge 1$. If any one of these arrangements is nondegenerate, the hypothesis \AB{n-1} shows that $k \le n+1$. 

If $\mathcal A$ is commutative, then each $A_i \in \mathcal A$ preserves each eigenspace $E_{\lambda,c}^j$, and so there is a containment 
\[
\mathcal A \subset \Stab(E_{\lambda,c}^i)
\]
of $\mathcal A$ in the stabilizer subgroup of any eigenspace arrangement $\{E_{\lambda,c}^i\}$. From \AC{n-1}, such $\{E_{\lambda,c}^i\}$ is either the simplex arrangement or its dual, or else the $\tilde \Sigma_5$-arrangement. By \Cref{lemma:stabstd} or \Cref{lemma:stabs5}, the stabilizer consists only of scalar matrices, an absurdity. This establishes \SB{n} in this case.

\para{Phase 2a: Kernel and image arrangements (construction)} We therefore assume that all arrangements $E_{\lambda,c}^i$ are degenerate (going forward, we will therefore omit the superscripts). If there are multiple eigenvalues $\lambda_1, \dots, \lambda_p$ for $p > 1$, then let $d_i$ be the minimal integer for which $E_{\lambda_i, d_i} = E_{\lambda_i,d_i+1}$. Each $E_{\lambda_i,d_i}$ is a proper subspace of $\C^n$ that is moreover $(\mathcal A, \rho)$-invariant. It follows that if the restriction of $\mathcal A$ to each $E_{\lambda_i,d_i}$ is degenerate, then $\mathcal A$ is itself degenerate. Thus some such restriction must be nondegenerate, showing that the strong form of the bound $k \le n$ in \SB{n} must hold.

We are therefore reduced to the case where there is one eigenvalue $\lambda$, the arrangements $E_{\lambda,c}$ are degenerate and form strict subspaces of $\C^n$ for $c < d$, and $E_{\lambda,d} = \C^n$. We pause to note that necessarily $d > 1$, since otherwise each $A_i$ would act identically as multiplication by $\lambda$, contradicting nondegeneracy of $\mathcal A$.

Each $E_{\lambda,c}$ is an $(\mathcal A, \rho)$-invariant subspace. If the restriction of $\mathcal A$ to any such subspace for $c < d$ is nondegenerate, or if the induced totally symmetric set on any quotient by any such subspace is nondegenerate, then $\mathcal A$ induces a totally symmetric set on a proper $m$-dimensional subspace, and by \SB{m} we obtain the strong form of the bound $k \le n$ required for \SB{n}.

We therefore can further assume that the restriction of $\mathcal A$ to $E_{\lambda, d-1}$ is degenerate, and likewise that the quotient ${\mathcal A}/{E_{\lambda,1}}$ is degenerate. The former assumption implies that $E_{\lambda,d-1}$ is contained in the kernel of $A_i - A_1$ for any $i > 1$, and the latter implies that the image of $A_i - A_1$ is contained in $E_{\lambda,1}$. Thus we can write 
\[
A_i = A_1 + T_i
\]
for some $T_i: \C^n \to \C^n$ admitting a factorization
\[
T_i: \C^n \to \C^n/E_{\lambda, d-1} \to E_{\lambda,1} \subset \C^n.
\]
Restricting the realization map to the stabilizer of $1 \in [k]$, we see that
\[
\mathcal A' = \{A_i - A_1 \mid i > 1\} = \{T_i \mid i > 1\}
\]
forms a (nondegenerate) totally symmetric set of cardinality $k-1$. 

\para{Phase 2b: Kernel and image arrangements (analysis)} We first consider the case where the arrangement $\ker T_i \subset \C^n$ is nondegenerate. Under this assumption, \Cref{lemma:nondegquot} asserts that the reduced arrangement $\ker T_i^{red} \subset \C^n/ E_{\lambda,d-1}$ is also nondegenerate. Applying \AB{\dim(\C^n/E_{\lambda,d-1})-1} to $\ker T_i^{red}$ shows that
\[
k-1 \le \dim(\C^n / E_{\lambda,d-1}) +1.
\]
To establish the strong bound $k \le n$ of \SB{n}, we must show that $\dim(\C^n/E_{\lambda, d-1}) \le n-2$. If $\dim(\C^n/E_{\lambda, d-1}) = 1$, then $\ker T_i^{red}$ is an arrangement in a one-dimensional space, hence a singleton: $k = 2 \le n$. We therefore assume $\dim(\C^n/E_{\lambda, d-1})\ge 2$. By the the Jordan inequality \eqref{equation:jordan2}, 
\[
\dim(\C^n/E_{\lambda, d-1}) \le \dim(\C^n)/d = n/d
\]
(recall that $E_{\lambda,d} = \C^n$). Because $d \ge 2$ and $\dim(\C^n/E_{\lambda, d-1}) \ge 2$, it follows that $n \ge 4$, and therefore, the inequality
\[
n/d \le n-2
\]
holds. Thus the strong form $k \le n$ of \SB{n} holds.

We consider next the case where $\im T_i \subset E_{\lambda,1}$ is nondegenerate. If $\dim(E_{\lambda,1}) \le n - 2$, then \AB{\dim(E_{\lambda,1})-1} implies that $k - 1 \le n-1$ as required for the strong bound. We then suppose that $\dim(E_{\lambda,1}) = n-1$. In this case, necessarily $d = 2$ and then $A_i$ for $i \ge 1$ can be represented in the form 
\[
A_i = \lambda I + X_i,
\]
where $X_i$ admits a factorization $X_i: \C^n \to \C^n/E_{\lambda,1} \to E_{\lambda,1} \subset \C^n$. Then the set 
\[
\mathcal X = \{A_i - \lambda I \mid i \ge 1\} = \{X_i \mid i \ge 1\}
\]
is also totally symmetric with respect to the same realization map as $\mathcal A$. As the arrangement $\im T_i = \im(X_i-X_1) \subset E_{\lambda,1}$ is assumed to be nondegenerate, the same is true of the arrangement $\im X_i$, and we conclude from \AB{\dim(E_{\lambda,1})-1} that $k \le \dim(E_{\lambda,1})+1 \le (n-1) + 1$ as required for \SB{n} in its strong form.

\para{Phase 3a: The derived totally symmetric set (construction)} We therefore assume that the arrangements $\ker T_i$ and $\im T_i$ are degenerate, so that each $T_i$ has the same kernel $\ker T \supseteq E_{\lambda, d-1}$ and the same image $\im T \subseteq E_{\lambda, 1}$. Note in particular that $\ker T$ and $\im T$ are invariant under the restriction of $\rho$ to the stabilizer of $A_1$. Each $T_i$ then admits a factorization as follows:
\[
\xymatrix{
T_i: \C^n \ar[r] & \C^n/\ker T \ar[r]^{\overline{T_i}} &\im T \ar[r]^{\subset} &\C^n,
}
\]
where $\overline{T_i}$ is an isomorphism. Since $\ker T$ and $\im T$ are invariant under the stabilizer of $A_1$, total symmetry of the set $\mathcal A'$ of $T_i$'s induces total symmetry of the set 
\[
\overline{\mathcal{A}'} = \{\overline{T_i} \mid i > 1\},
\]
in the sense that for any $\sigma \in \Sigma_{k-1}$, there are automorphisms $P_\sigma \in \GL(\C^n/\ker T)$ and $Q_\sigma \in \GL(\im T)$ (induced from the action of $\rho$ on $\C^n$) for which 
\begin{equation}\label{gentotally symmetric set}
Q_\sigma \overline{T_i} P_\sigma^{-1} = \overline{T_{\sigma(i)}}.
\end{equation}
We are led to consider the {\em derived totally symmetric set}
\[
\mathcal A^{der} = \{\overline{T_2}^{-1} \overline {T_i} \mid i > 2 \} \subset \End(\C^n/\ker T).
\]
By \eqref{gentotally symmetric set}, $\mathcal A^{der}$ is a totally symmetric set of cardinality $k-2$ under the realization map where $\sigma$ acts by conjugation by $P_\sigma$ for $\sigma \in \Sigma_k$ fixing $1$ and $2$. We remark that $\mathcal{A}^{der}$ is similar to (but technically distinct from) the notion of ``derived totally symmetric set'' $\mathcal{A}'$ as used in \cite{Kordek-Margalit}.

\para{Phase 3b: The derived totally symmetric set (analysis)} Applying \SB{\dim(\C^n/\ker T)} to $\mathcal A^{der}$ gives $k-2 \le \dim(\C^n/\ker T) + 1$. To obtain the strong bound in \SB{n}, we will show that $\dim(\C^n/\ker T) \le n - 3$. As $\ker T \supseteq E_{\lambda,d-1}$, it follows from the Jordan inequality \eqref{equation:jordan2} that
\[
\dim(\C^n/ \ker T) \le \dim(\C^n/E_{\lambda, d-1}) \le n/d.
\]
The inequality 
\[
n/d \le n-3
\]
holds for $n \ge \frac{3d}{d-1}$, i.e. for $n \ge 6$ if $d = 2$, for $n \ge 5$ if $d = 3$, and for $n \ge 4$ if $d \ge 4$. Note that $d \le n$, so that the result is proved in the case $d \ge 4$. This leaves a handful of sporadic cases to analyze: $2 \le n \le 5$ for $d = 2$ and $3 \le n \le 4$ for $d = 3$.

We begin with $d = 3$. If $n \le 4$ then the Jordan inequality \eqref{equation:jordan2} implies that necessarily $\dim(\C^n/\ker T) = 1$. Thus $\mathcal A^{der}$ is a totally symmetric set in a one-dimensional space, which must be a singleton. Thus $k-2= 1$, proving the results in this case.

We next consider $d = 2$. For $n \le 3$ we must also have $\dim(\C^n/\ker T) = 1$ so the argument of the previous paragraph applies. If $n = 4,5$ then $\dim(\C^n/\ker T) \le 2$. Assuming $\dim(\C^n/\ker T) = 2$, we conclude $k - 2 \le 3$, i.e. $k \le 5$, so there is nothing to show in the case $n = 5$. This leaves the case of $n = 4$ and $\dim(\C^n/\ker T) = 2$. Necessarily then $\ker T = E_{ \lambda,1}$ must be $2$-dimensional. We apply a variant of the  construction of the derived totally symmetric set: the elements $A_i$ must be of the form $A_i = \lambda I + X_i$, with $X_i: \C^n \to \C^n/E_{\lambda,1} \to E_{\lambda,1}$. Since the arrangement $E_{\lambda,1} = \ker X_i$ is assumed to be degenerate, it follows that the elements $\overline{X_i}: V/E_{\lambda,1} \to E_{\lambda,1}$ are injective, hence isomorphisms. Arguing as above, the set 
\[
\overline{X_1}^{-1}\overline{X_i} : \C^n/{E_{\lambda,1}} \to \C^n/{E_{\lambda,1}}
\]
is totally symmetric in the $2$-dimensional space $\C^n/E_{\lambda}$, hence has size $k - 1 \le 3$ as required for \SB{n} in its strong form to hold.
\end{proof}

\begin{remark}
\label{remark:kn1}
We record for later use the following observation which was seen in the course of the proof: {\em if $\mathcal A \subset \GL_n(\C)$ is a totally symmetric set of size $n+1$, then necessarily some arrangement $\{E_{\lambda,c}\}$ is nondegenerate.}
\end{remark}

\begin{remark}\label{remark:noncom}
Even when the original totally symmetric set $\mathcal A$ is commutative, the derived set $\mathcal A^{der}$ (when it can be constructed) is not necessarily commutative. Thus the analysis of commuting totally symmetric sets inevitably leads one to consider this more general setting.
\end{remark}

\section{{\bf SC}: Classification of maximal totally symmetric sets}\label{section:sc}
In this section we consider the statement \SC{n} concerning the classification of totally symmetric sets in $\GL_n(\C)$ of maximal size $k = n+1$:
\begin{itemize}
    \item \SC{n}: Let $\mathcal A = \{A_1,\dots, A_{n+1}\}\subset \GL_n(\C)$ be a totally symmetric set of size $k = n+1$. Then $\mathcal A$ arises from a decomposition system associated to one of the maximal arrangements described in \AC{n-1}.
\end{itemize}
Recall that \AC{n-1} asserts that a maximal arrangement is either the simplex arrangement or its dual, or if $n = 5$, the $\tilde \Sigma_5$ arrangement.

In \Cref{appendix:s5}, we establish the following technical lemma which will be used in the course of \Cref{prop:sc}.

\para{\Cref{lemma:halfdim1espc}} {\em Let $\mathcal W_{\tilde \Sigma_5} = \{W_1, \dots, W_5\}$ denote the $\tilde \Sigma_5$ arrangement. There is no totally symmetric set $\mathcal A = \{A_1,\dots,A_5\} \subset \GL_{4}(\C)$ for which $E_{\lambda,1}(A_i) = W_i$ and $E_{\lambda,2}(A_i) = \C^{4}$.}

\begin{proposition}\label{prop:sc}
Assuming \IH{n-1} and \SB{n}, then also \SC{n} holds.
\end{proposition}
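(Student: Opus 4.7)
The plan is to apply \AC{n-1} to a nondegenerate generalized eigenspace arrangement attached to $\mathcal A$, and then reconstruct $\mathcal A$ from the resulting rigidity. By \Cref{remark:kn1}, since $\abs{\mathcal A} = n+1$, some arrangement $\{E_{\lambda,c}^i\}$ is nondegenerate; \Cref{corollary:permrigid} gives that it has size $n+1$, and \AC{n-1} identifies it as the simplex $\mathcal S_n$, the dual simplex $\mathcal S_n^*$, or (only if $n = 5$) the $\tilde \Sigma_5$ arrangement.

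The first main step is to reduce to $c = 1$, so that the honest eigenspaces $\{E_\lambda^i\}$ themselves realize one of the three maximal arrangements. In the simplex case this is automatic, since the nonzero $E_\lambda^i$ must coincide with the line $E_{\lambda,c}^i$. In the dual simplex case, a proper inclusion $E_\lambda^i \subsetneq E_{\lambda,c}^i$ would make $\{E_\lambda^i\}$ a degenerate arrangement, whose common value is an $(\mathcal A, \rho)$-invariant proper subspace; the restriction and quotient techniques used in the proof of \Cref{prop:sbn} then produce a totally symmetric set of size $n+1$ in lower dimension, contradicting \SB{m} for the appropriate $m$. In the $\tilde \Sigma_5$ case, a Jordan-inequality dimension count handles most configurations, and the remaining borderline scenario in which $E_\lambda^i = W_i$ with $E_{\lambda,2}^i = \C^4$ is exactly what \Cref{lemma:halfdim1espc} excludes.

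Once $c = 1$, the rigidity results \Cref{lemma:mustbestd} and \Cref{corollary:reals5} identify the realization map $\rho$ up to scalars with the standard representation of $\Sigma_{n+1}$ (for the simplex and dual simplex cases) or the basic projective representation of $\Sigma_5$ (for $\tilde \Sigma_5$). Fixing $i$, the element $A_i$ commutes with $\rho(\Stab(i))$, and in each case $\C^n$ splits as a direct sum of two inequivalent irreducible $\rho(\Stab(i))$-summands, namely $E_\lambda^i$ and its canonical complement coming from the simplex system of \Cref{example:simplexsystem} or the $\tilde \Sigma_5$ system of \Cref{example:s5system}. Schur's lemma then forces $A_i$ to act as $\lambda$ on $E_\lambda^i$ and as a single scalar $\mu$ on the complement, which is precisely the eigenspace construction (\Cref{espace}) applied to the associated decomposition system.

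The main obstacle is the reduction to $c = 1$ in the $\tilde \Sigma_5$ case: the half-dimensional subspaces $W_i$ sit in generic position ($W_i \oplus W_j = \C^4$), so the usual restriction and quotient tools do not immediately eliminate nontrivial Jordan block behavior of the form $A_i = \lambda I + N_i$ with $N_i^2 = 0$ and $\ker N_i = W_i$. \Cref{lemma:halfdim1espc} is precisely the input needed to dispose of this final configuration and close the argument.
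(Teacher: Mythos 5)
Your core idea — pin down the realization map via \Cref{lemma:mustbestd} and \Cref{corollary:reals5}, then apply Schur's lemma to $A_i$ against the two inequivalent irreducible $\Stab(i)$-summands — is correct, and in fact it is strong enough to carry the entire proof on its own. Once $\rho$ is identified up to scalars with the standard representation of $\Sigma_{n+1}$ (or the basic projective representation of $\Sigma_5$), each $A_i$ commutes with the honest group $P(\Stab(i))$, so Schur's lemma forces $A_i$ to act by a single scalar on each of the two summands of the canonical decomposition. This immediately makes $A_i$ diagonalizable with exactly two eigenvalues (they must be distinct, else $\mathcal A$ is degenerate), which is precisely the eigenspace construction. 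In particular, your ``reduce to $c=1$'' step is redundant, and \Cref{lemma:halfdim1espc} is not actually needed here: the nilpotent part $N_i = A_i - \lambda I$ would also commute with $P(\Stab(i))$, hence be scalar on each irreducible summand, hence zero.

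That said, your step 1 as written has a real gap in the dual simplex case. The claim that a proper inclusion $E_\lambda^i \subsetneq E_{\lambda,c}^i$ ``would make $\{E_\lambda^i\}$ a degenerate arrangement'' is not correct: \AC{n-1} also permits $\{E_\lambda^i\}$ to be a smaller nondegenerate arrangement, e.g.\ the simplex arrangement of lines sitting inside the dual simplex hyperplanes. To close this you either need the paper's observation that the simplex lines are not contained in the dual simplex hyperplanes, or the Schur-type observation that $E_\lambda^i$ is a $\Stab(i)$-subrepresentation of the irreducible summand $E_{\lambda,c}^i \cong V_{std}^{n-1}$ and hence is either $0$ or the whole thing. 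Since step 2 makes step 1 unnecessary, this gap does not sink the argument, but the proposal as written does not stand up in the order presented.

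Compared with the paper: the paper's proof uses the same rigidity inputs (\Cref{remark:kn1}, \AC{n-1}, \Cref{lemma:mustbestd}, \Cref{corollary:reals5}), but then runs a hands-on case analysis on the number of eigenvalues and the Jordan block structure, invoking \Cref{lemma:halfdim1espc} (and an unstated variant for $\mathcal S_2$) to exclude the single-eigenvalue configurations. Your Schur's lemma route is more conceptual and shorter: it rules out nontrivial Jordan blocks for free and never needs \Cref{lemma:halfdim1espc} in this proposition. Restructured to lead with Schur's lemma and drop the $c=1$ detour, this would be a genuine simplification of the paper's argument.
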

\begin{proof}
Let $\mathcal A = \{A_1, \dots, A_{n+1}\} \subset \GL_n(\C)$ be a totally symmetric set of maximal size $k = n+1$ allowed by \SB{n}. Following \Cref{remark:kn1}, the strong bound $k \le n$ holds unless $\mathcal A$ admits a nondegenerate arrangement of the form $E_{\lambda,c}^i$. If $n \ne 5$ then by \AC{n-1} this must be simplex or its dual; when $n=5$ there is the additional possibility that $E_{\lambda,c}^i$ is the $\tilde \Sigma_5$ arrangement.

Our analysis will proceed by a study of the possible arrangements for the other eigenspaces of $\mathcal A$. The uniqueness of the projective class of realization map for both (dual) simplex and $\mathcal W_{\tilde \Sigma_5}$, as obtained in \Cref{lemma:mustbestd,corollary:reals5} imply that {\em any other eigenspace arrangement must also be a system of subrepresentations for the same representation}. In particular, since both $V_{std}^n, V_{basic}^5$ are irreducible, no arrangement can be degenerate, as such arrangement would have to arise from a proper subrepresentation of $V_{std}^n$ or  $V_{basic}^5$.  

We first consider the case where the arrangement is simplex or its dual. As the decomposition $V_{std}^n = V_{std}^{n-1} \oplus \C$ is {\em canonical} (being the decomposition into isotypic subspaces), it follows that there is at most one other proper eigenspace arrangement, and hence $\mathcal A$ has at most two distinct eigenvalues. 

We next claim that $\mathcal A$ must have {\em exactly} two distinct eigenvalues. Suppose to the contrary that $\mathcal A$ has a single eigenvalue $\lambda$. The arrangement $E_{\lambda,1}^i$ is either the simplex arrangement or its dual. Suppose first that it is the dual simplex arrangement; necessarily then $E_{\lambda,2}^i = \C^n$ for all $n$. Then $A_i \in \mathcal A$ has an expression of the form
\[
A_i(v) = \lambda v + \alpha_i(v) w_i
\]
for some some set of nonzero vectors $w_i \in E_{\lambda,1}^i = \ker(\alpha_i)$. 

The spans of the $w_i$ (i.e. the arrangement $\im A_i - \lambda I$) is a totally symmetric line arrangement, which must therefore be the simplex arrangement: up to scale, $w_i = e_i$. By construction, this arrangement must be contained in $E_{\lambda,1}^i$, i.e. the dual simplex arrangement. But the dual simplex arrangement does not contain the simplex arrangement (this can be seen from the explicit representation given in \Cref{example:simplex}).

It remains to consider the case where $E_{\lambda,1}^i$ is the simplex arrangement. Then $E_{\lambda,2}^i$ is either the dual simplex arrangement or $\C^n$. In the latter case, the Jordan inequalities \eqref{equation:jordan2} force $n = 2$, in which case an easy variant of \Cref{lemma:halfdim1espc} (adapted to the simplex arrangement $\mathcal S_2$) asserts that no such totally symmetric set can exist. In the former, we would have the simplex arrangement $E_{\lambda,1}^i$ contained in the dual simplex arrangement $E_{\lambda,2}^i$, again impossible.

Thus we find that $\mathcal A$ must have exactly two distinct eigenvalues, one with $E_{\lambda,1}^i$ given by the simplex arrangement, and the remainder with $E_{\mu,1}^i$ given by the dual simplex arrangement---this is exactly the characterization of the noncommutative simplex construction in terms of the eigenspace construction given in \Cref{example:ncsimplex}.\\

It remains to consider the case where $E_{\lambda,c}^i$ is the $\tilde \Sigma_5$ arrangement. Again, the decomposition $V_{basic}^5 = (V_{basic}^4)_i \oplus (V_{basic}^{4,a})_i$ is canonical, so that there is at most one other proper eigenspace arrangement, necessarily given by $(V_{basic}^{4,a})_i$. 

If this arises, then this shows that $\mathcal A$ arises via the eigenspace construction as applied to the decomposition system $(V_{basic}^4)_i \oplus (V_{basic}^{4,a})_i$ as desired. Otherwise, $\mathcal A$ has exactly one eigenvalue, with $E_{\lambda,1}^i$ given by the $\tilde \Sigma_5$ arrangement and $E_{\lambda,2}^i = \C^4$, but this is precluded by \Cref{lemma:halfdim1espc}.
\end{proof}

\section{{\bf cSC}: Classification of maximal commutative totally symmetric sets}\label{section:csc}

Finally we come to the statement \cSC{n} concerning the classification of $n$-element commutative totally symmetric sets in $\GL_n(\C)$:
\begin{itemize}
    \item \cSC{n}: Let $\mathcal A = \{A_1,\dots, A_{n}\}\subset \GL_n(\C)$ be a commutative totally symmetric set of size $k = n$. Then $\mathcal A$ arises via the standard construction, the simplex construction, or else $n=4$ and $\mathcal A$ is the sporadic example of \Cref{lemma:4}.
\end{itemize}

\begin{proposition}\label{prop:csc}
Assuming \IH{n-1}, then also \cSC{n} holds.
\end{proposition}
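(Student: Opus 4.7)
The plan is to bifurcate on whether $\mathcal A$ is irreducible or reducible. I will dispose of the irreducible case using the classification of \Cref{corollary:partition}, and reduce the reducible case to the study of nilpotent extensions of scalars, which I will then classify via their associated image and kernel arrangements combined with the inductive hypotheses on arrangements.

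If $\mathcal A$ is irreducible, \Cref{corollary:partition} writes $\mathcal A \cong \mathcal A(\vec\lambda)$ for a weight $\vec\lambda\colon[n]\to\C$ with part sizes $(k_1,\dots,k_m)$ satisfying $\binom{n}{k_1,\dots,k_m} = n$. Any partition with all parts at most $n-2$ has multinomial at least $\binom{n}{n-2,1,1} = n(n-1)$ (or $\binom{n}{n-2,2} = n(n-1)/2 > n$ for $n \ge 4$ when only two parts are allowed), and the partition $(n)$ gives multinomial $1$; so the partition must be $(n-1,1)$, which is exactly the standard construction of \Cref{example:standard}.

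If $\mathcal A$ is reducible, I would fix a proper $(\mathcal A,\rho)$-invariant subspace $W \le V$. The restriction $\mathcal A|_W$ and quotient $\mathcal A/W$ are commutative totally symmetric sets in strictly smaller-dimensional spaces, so by the commutative form of \SB{\dim W} and collision-implies-collapse (\Cref{remark:cimpc}) both must be degenerate, giving scalars $\lambda,\mu$ with $A_i|_W = \lambda I_W$ and $A_i/W = \mu I_{V/W}$. If $\lambda \ne \mu$ each $A_i$ has two distinct eigenvalues and is diagonalizable, and since the $A_i$ pairwise commute they are simultaneously diagonalizable; assigning each common eigenvector a type in $\{\lambda,\mu\}^n$ and using $\Sigma_n$-invariance of the type multiset together with the constraints imposed by $W$ and $V/W$, every type is forced to be constant, so all $A_i$ coincide, contradicting nondegeneracy. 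Hence $\lambda = \mu$, and setting $N_i := A_i - \lambda I$ produces a pairwise-commuting family of nilpotents with $N_iN_j = 0$, $\im N_i \subseteq W$, and $\ker N_i \supseteq W$.

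The nilpotent extension is then classified via the totally symmetric image arrangement $\{\im N_i\} \subseteq W$ and kernel arrangement $\{\ker N_i\} \supseteq W$. If $\{\im N_i\}$ is nondegenerate, then since it has $n$ elements in $W$, \AB{\dim W - 1} forces $\dim W = n-1$ and the arrangement to be maximal; by \AC{n-2} it must be the simplex arrangement, its dual, or (in the exceptional low dimension) the $\tilde\Sigma_5$ arrangement. The constraint $\ker N_i \supseteq W$ of codimension one forces $N_i$ to be of rank one, ruling out the dual simplex and $\tilde\Sigma_5$ possibilities outside of low dimensions where they coincide with the simplex case, and identifying $\mathcal A$ as the simplex construction $\Sigma\mathcal S_{n-1}$ of \Cref{example:simplexconst}. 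A parallel argument using the kernel arrangement handles the case where $\{\ker N_i\}$ is nondegenerate but $\{\im N_i\}$ is not. In the remaining doubly-degenerate case, all $N_i$ factor as $V \twoheadrightarrow V/K \xrightarrow{\bar N_i} U \hookrightarrow V$ through common spaces $K \supseteq W$ and $U \subseteq W$; the family $\{\bar N_i\}\subset\Hom(V/K,U)$ carries a generalized total-symmetry structure, and a dimension count using the noncommutative \SB{\dim(V/K)} applied to the derived totally symmetric set $\{\bar N_1^{-1}\bar N_i : i \ge 2\} \subset \End(V/K)$ (once the $\bar N_i$ are shown to be isomorphisms) eliminates every configuration except the sporadic $n=4$ example of \Cref{lemma:4}. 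The hardest step will be this final doubly-degenerate analysis, where the failure of semisimplicity for totally symmetric sets manifests most sharply and is responsible for the exceptional dimension-four construction.
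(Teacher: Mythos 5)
Your bifurcation into irreducible/reducible cases matches the paper's strategy, and your treatment of the irreducible case via the multinomial count $\binom{n}{k_1,\dots,k_m}=n$ (forcing partition $(n-1,1)$, i.e.\ the standard construction) is correct and essentially what the paper invokes through \Cref{corollary:partition}. However, your reduction of the reducible case has a genuine gap.

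You assert that, once a proper $(\mathcal A,\rho)$-invariant subspace $W$ is found, the degenerate restriction and quotient ``give scalars $\lambda,\mu$ with $A_i|_W=\lambda I_W$ and $A_i/W=\mu I_{V/W}$.'' This does not follow. Degeneracy of $\mathcal A|_W$ means only that all $A_i|_W$ agree with some single matrix $A\in\End(W)$, which may have nontrivial Jordan structure; likewise for the quotient. (For instance, even when $\dim W=1$ forces the restriction to be scalar, the quotient is a single $(n-1)\times(n-1)$ matrix with no reason to be scalar.) Everything downstream in your sketch---the $\lambda\ne\mu$ elimination, the conclusion that $N_i:=A_i-\lambda I$ satisfies $N_iN_j=0$ with $\im N_i\subseteq W\subseteq\ker N_i$, and the resulting index-$2$-nilpotent analysis---rests on this unjustified premise. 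In effect you have silently restricted to the case where the generalized eigenspace filtration has length $d=2$. The paper has to rule out $d>2$ by a separate argument: starting from the observation that $E_\lambda$ (the common $\lambda$-eigenspace of all $A_i$) is nonzero, putting the quotient in Jordan form, using commutativity to zero out the off-diagonal blocks attached to eigenvalues $\ne\lambda$, arriving at a single-eigenvalue filtered form, and then showing in the $d>2$ case that the arrangement $\{\im(A_i-\lambda I)\}$ would violate reducedness of the maximal arrangements in \AC{}. None of this is automatic, and without it your argument does not establish \cSC{n}.

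A secondary, smaller issue: in the doubly-degenerate subcase, your dimension count (following the paper's Phase 3 strategy) bounds $n\le 4$ with $n$ even; you should note that $n=2$ also survives and yields the simplex construction, rather than stating that only the sporadic $n=4$ example remains. The ``parallel argument using the kernel arrangement'' is also left at a level of vagueness that hides the dualization step needed to make \AB{} bite on a hyperplane arrangement.
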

\begin{proof}
If $\mathcal A$ is irreducible and satisfies $k = n$, then \Cref{corollary:partition} asserts that $\mathcal A$ arises via the standard construction.

We therefore assume that $\mathcal A$ is reducible. If there is any proper $(\mathcal A, \rho)$-invariant subspace $W \le \C^n$ for which the restriction or quotient of $\mathcal A$ is non-degenerate, then \SB{\dim(W)} asserts that $\abs{\mathcal A}$ is further bounded by the dimension of this sub- or quotient space, so that $k = n$ cannot hold. 

Since $\mathcal A$ is assumed to be reducible, there is {\em some} proper $(\mathcal A, \rho)$-invariant subspace $W \le \C^n$. By hypothesis, the restriction $\mathcal A|_W$ is degenerate, say given by the common matrix $A \in \End(W)$. Such $A$ admits some eigenvalue $\lambda$, so that the $n$-fold eigenspace $E_{\lambda} := E(A)_{\lambda,1}^{[n]}$ (in the notation of \Cref{definition:pfold}) is nonempty (note however that $E_\lambda$ need not be the entire $\lambda$-eigenspace of any $A_i$). 

By hypothesis, the quotient ${\mathcal A}/{E_\lambda}$ is also degenerate, say given by the common matrix $B \in \End(\C^n/E_{\lambda})$.  Choosing coordinates so as to render $B$ in Jordan form, we find that $\mathcal A$ has the following block structure, where $J_{\lambda_j}(B)$ denotes the Jordan block for the eigenvalue $\lambda_j$ of $B$:
\[
A_i = \left (\begin{array}{c|ccc}
        \lambda I   & X_{i,1}           & \dots     & X_{i,p} \\ \hline
                    &J_{\lambda_1}(B)   &           &   \\
                    &                   & \ddots    &    \\
                    &       &               & J_{\lambda_p}(B)            
\end{array}\right)
\]
Since $\mathcal A$ is commutative, the coordinates corresponding to the Jordan block for $\lambda_j$ can further be adjusted so as to set $X_{i,j}$ to zero for every eigenvalue $\lambda_j \ne \lambda$. Thus if {\em any} $\lambda_j \ne \lambda$, then $\mathcal A$ admits $E_{\lambda_j}$ as an $(\mathcal A, \rho)$-invariant subspace. Our assumptions force the quotient ${\mathcal A}/{E_{\lambda_j}}$ to be degenerate; in the coordinates above, this forces every $X_{i,j}$ to be constant, showing degeneracy of $\mathcal A$ itself.

We therefore assume that $A_i$ is of the form
\[
A_i = \left (\begin{array}{c|c}
        \lambda I   & X_{i} \\ \hline
                    &J_{\lambda}     
\end{array}\right)
\]
for $J_\lambda$ in Jordan canonical form. That is, there is a filtration $E_\lambda = W_1 \le W_2 \le \dots \le W_d = V$ for which, in a basis for $\C^n$ consisting of an increasing union of bases for $W_i$,
\begin{equation}\label{equation:ai}
A_i = \left (\begin{array}{c|cccccc}
        \lambda I   & X_{i,2}   &           &\dots      &           &X_{i,d}  \\ \hline
                    &\lambda I  & Y_3       &                           \\
                    &           & \lambda I & Y_4       &               \\
                    &           &           &\lambda I  &               \\
                    &           &           &           & \ddots &        \\
                    &           &           &           & &Y_d           \\
                    &           &           &           & &\lambda I
\end{array}\right),
\end{equation}
with each $Y_i$ having linearly-independent columns. 

Consider the arrangement $E_{\lambda,1}^i \le W_2 \le \C^n$. If this is nondegenerate, then $\ker X_{i,2}$ forms a nondegenerate arrangement of cardinality $k=n$ in $W_2/W_1$. As $\dim(W_2) \le n$ and $\dim(W_1) \ge 1$, it follows that in this case, $\dim(W_2) = n$ (i.e. $W_2 = \C^n$) and $\dim(W_1) = 1$. Then $A_i$ is visibly given as the suspension of some totally symmetric arrangement of $n$ dual vectors $X_{i,2}$ in the $n-1$-dimensional space $\C^n/W_1$, which must be the dual simplex arrangement by \Cref{proposition:simpmax}.

If instead $E_{\lambda, 1}^i$ is degenerate, we consider the restriction and quotient ${\mathcal A}|_{E_{\lambda,1}}$ and ${\mathcal A}/{E_{\lambda,1}}$. By hypothesis these are both degenerate; repeating the analysis above, we find matrices for $A_i$ of the form \eqref{equation:ai} but now with each $X_{i,2}$ having linearly-independent columns in addition to all of the $Y_j$ having such. It follows that the filtration $W_1 = E_{\lambda,1} \le \dots \le W_d = \C^n$ is in fact the filtration $\{E_{\lambda,c}\}$ of generalized eigenspaces, all of which form degenerate arrangements. By hypothesis the restriction and quotient of $\mathcal A$ on each $E_{\lambda,c}$ for $c < d$ is degenerate, and so $X_{i,c}$ is independent of $i$ for $c < d$.

Adjusting coordinates as in the Jordan decomposition, one can arrange to set $X_{i,c} = 0$ for $2 < c < d$. Defining $Y_2:= X_{i,2}$ in the case $d > 2$, we arrive at the following matrices for $A_i$: for $d > 3$,
\[
A_i = \left (\begin{array}{c|cccccc}
        \lambda I   & Y_2       &   0       &\dots      &   0       &X_{i,d}  \\ \hline
                    &\lambda I  & Y_3       &                           \\
                    &           & \lambda I & Y_4       &               \\
                    &           &           &\lambda I  &               \\
                    &           &           &           & \ddots &        \\
                    &           &           &           & &Y_d           \\
                    &           &           &           & &\lambda I
\end{array}\right),
\]
and for $d = 2$:
\[
A_i = \begin{pmatrix}
\lambda I   & X_{i,2}\\
            & \lambda I
\end{pmatrix}.
\]
The columns of $X_{i,2}$ and $Y_2$ are moreover linearly independent.

Consider first the case $d > 2$. We see that $\{A_i - \lambda I\}$ forms a totally symmetric set of $n$ elements. We examine the associated arrangement $\{\im(A_i - \lambda I) \} \le E_{\lambda,d-1}$ of their images. As $\dim(E_{\lambda, d-1}) \le n-1$, it follows from \AB{\dim(E_{\lambda, d-1})-1} that necessarily this must be an equality. Note that each space $\im(A_i - \lambda I)$ contains the image of $Y_2$, so that this arrangement is not {\em reduced} in the sense of \Cref{definition:quotientarrangement}. This contradicts the classification \AC{\dim(E_{\lambda, d-1})-1} of maximal arrangements in $E_{\lambda,d-1}$: all such arrangements are reduced.

Consider finally the case $d = 2$. The matrices $A_i - \lambda I$ form a totally symmetric set of cardinality $n$, showing that $\im X_{i,2}$ forms a totally symmetric arrangement in $E_{\lambda,1}$ of cardinality $n$. If this is nondegenerate, then \AB{\dim(E_{\lambda,1})-1} implies that $\dim(E_{\lambda,1}) = n-1$. Thus $\im X_{i,2}$ is an arrangement of {\em lines} and hence forms the simplex arrangement by \AC{\dim(E_{\lambda,1})-1}, showing that $\mathcal A$ is the suspension of the simplex arrangement as claimed.

If $\im X_{i,2}$ is degenerate, then arguing as in Phase 3a of the proof of \Cref{prop:sbn}, we quotient by this invariant subspace; in practice, we may assume that the matrices $\{X_{i,2}\}$ are square and invertible. As in Phase 3b, the set $\{X_{1,2}^{-1} X_{i,2}\}$ is totally symmetric of cardinality $n-1$, acting on a space of dimension at most $n/2$. By \SB{n/2}, it follows that $n-1 \le n/2+1$, which holds only for $n \le 4$ (note also that $n$ must be even). 

If $n = 2$, then we must study totally symmetric sets of $2$ elements of the form
\[
A_i = \begin{pmatrix} \lambda &x_i\\ 0 & \lambda \end{pmatrix}.
\]
with $x_i \ne 0$. An easy calculation shows that up to conjugation, there is exactly one, with $x_1 = 1$ and $x_2 = -1$; this arises from the simplex construction as claimed.

Finally we must analyze the case $n = 4$. Here we are interested in $4$-element totally symmetric sets in $\End(\C^4)$ of the form
\[
A_i = \begin{pmatrix} \lambda I & X_i\\ & \lambda I\end{pmatrix},
\]
where each $X_i \in \End(\C^2)$ is invertible. \Cref{lemma:4} shows that surprisingly, there is a unique sporadic example of this form.
\end{proof}

In preparation for \Cref{lemma:4}, we have the following straightforward result which can be checked by direct computation.
\begin{lemma}\label{lemma:conj}
Let $A_X \in \End(\C^{2n})$ be a block matrix of the form
\[
A_X = \abcd{\lambda I}{X}{0}{\lambda I}
\]
with each block of size $n \times n$ and $X \in \GL_n(\C)$. Suppose $M \in \GL_{2n}(\C)$ conjugates $A_X$ to some other matrix of the form $A_Y$. Then $M$ admits the structure of a block matrix
\[
M = \abcd{P}{R}{0}{Q}
\]
(again with blocks of size $n \times n$), and the action of $M$ on $A_X$ is given by
\[
M A_X M^{-1} = A_{PXQ^{-1}} = \abcd{\lambda I}{P X Q^{-1}}{0}{\lambda I}.
\]
\end{lemma}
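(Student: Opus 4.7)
The plan is to observe that the subspace $K := \C^n \oplus \{0\} \subset \C^{2n}$ (i.e.\ the vectors whose bottom $n$ coordinates vanish) is canonically determined by $A_X$---specifically as the kernel of $A_X - \lambda I$---and must therefore be preserved by any conjugation carrying $A_X$ to another matrix $A_Y$ of the same form with $Y \in \GL_n(\C)$. Once $M$ is shown to preserve $K$, the block structure is automatic, and the conjugation formula follows from a mechanical computation.

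First I would compute
\[
A_X - \lambda I = \abcd{0}{X}{0}{0}
\]
and note that invertibility of $X$ forces $\ker(A_X - \lambda I) = K$ (and in fact $\im(A_X - \lambda I) = K$ as well). The same identity holds for $A_Y$ in place of $A_X$. Next, since $M A_X M^{-1} = A_Y$ gives $M(A_X - \lambda I)M^{-1} = A_Y - \lambda I$, comparing kernels yields $M(K) = K$. In block notation, preservation of $K$ is precisely the statement that the bottom-left $n \times n$ block of $M$ vanishes, so that $M$ has the required form $M = \abcd{P}{R}{0}{Q}$ with $P, Q \in \GL_n(\C)$ and $R \in \End(\C^n)$.

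The remaining step is a direct matrix computation: inverting $M$ to obtain $M^{-1} = \abcd{P^{-1}}{-P^{-1}RQ^{-1}}{0}{Q^{-1}}$ and then multiplying out $M A_X M^{-1}$. The scalar-matrix diagonal of $A_X$ causes the contributions involving $R$ to cancel, leaving the diagonal blocks equal to $\lambda I$ and the upper-right block equal to $PXQ^{-1}$, as claimed. I expect no real obstacle here---the content of the lemma lies entirely in the rigidity step identifying $K = \ker(A_X - \lambda I)$ as an invariant that $M$ must preserve, with the final formula amounting to routine bookkeeping.
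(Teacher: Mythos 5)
Your proof is correct. The paper leaves this lemma to the reader as a ``direct computation,'' and your argument is a clean way to organize that computation: identifying $K = \ker(A_X - \lambda I) = \im(A_X - \lambda I)$ as an invariant subspace that $M$ must preserve (since $M$ carries $\ker(A_X - \lambda I)$ to $\ker(A_Y - \lambda I) = K$) explains \emph{why} the bottom-left block of $M$ must vanish, rather than discovering it by setting entries of a product to zero. One tiny point worth making explicit: $M(K) \subseteq K$ together with invertibility of $M$ and finite-dimensionality of $K$ gives $M(K) = K$, and invertibility of the diagonal blocks $P, Q$ then follows from $\det M = \det P \cdot \det Q \neq 0$. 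The final block multiplication is exactly as you describe, with the $R$-terms cancelling because the diagonal blocks of $A_X$ are scalar.
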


\begin{lemma}\label{lemma:4}
Assume the classification \SC{2} of $3$-element totally symmetric sets in $\GL_2(\C)$. Then up to conjugation, there is a unique $4$-element totally symmetric set $\mathcal A \subset \End(\C^4)$ of the form
\[
A_i = \begin{pmatrix} \lambda I & X_i\\ & \lambda I\end{pmatrix},
\]
where $X_i \in \End(\C^2)$ is invertible. It is given as follows: Define $X_1 = I$ and let $X_2, X_3, X_4$ be given by the noncommutative simplex construction (\Cref{ex:k3n2}) with $\mu$ satisfying $3\mu^2 + 2 \mu + 3=0$ and $\lambda = \mu^{-1}$. Then, taking $A_i$ as
\[
A_i = \begin{pmatrix} \nu I & X_i\\ & \nu I\end{pmatrix},
\]
the set $\mathcal A = \{A_1, A_2, A_3, A_4\}$ is totally symmetric for any choice of $\nu \in \C$.
\end{lemma}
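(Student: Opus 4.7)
The plan is to first reduce the problem to the classification of $3$-element totally symmetric sets in $\GL_2(\C)$ via \SC{2}, then extract necessary constraints from realizing the transposition $(1\ 2)$, and finally verify sufficiency.

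First I would apply \Cref{lemma:conj} to conjugate $\mathcal A$ by $M = \mathrm{diag}(I, X_1) \in \GL_4(\C)$; this preserves the block form of each $A_i$ and replaces $X_i$ with $X_i X_1^{-1}$, so we may assume $X_1 = I$. Given any realization map $\rho$ for $\mathcal A$, \Cref{lemma:conj} writes $\rho(\sigma) = \mathrm{diag}(P_\sigma, Q_\sigma)$ acting on each $X_i$ by $X_i \mapsto P_\sigma X_i Q_\sigma^{-1}$. For $\sigma$ in the stabilizer of $1 \in [4]$---a copy of $\Sigma_3 \le \Sigma_4$---the equation $P_\sigma X_1 Q_\sigma^{-1} = X_1 = I$ forces $P_\sigma = Q_\sigma$, so the stabilizer acts on $\{X_2, X_3, X_4\}$ by honest conjugation. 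This makes $\{X_2, X_3, X_4\}$ a $3$-element totally symmetric set in $\GL_2(\C)$, which by \SC{2} must arise from the noncommutative simplex construction (a commutative $3$-element set in $\GL_2(\C)$ would be degenerate). After a further simultaneous change of basis by $\mathrm{diag}(M, M)$---which preserves $X_1 = I$---I would take $X_2, X_3, X_4$ in the explicit form of \Cref{ex:k3n2}, for some pair of distinct eigenvalues $\lambda \ne \mu$.

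Next, writing $\rho((1\ 2)) = \mathrm{diag}(P,Q)$, the relations $P X_1 Q^{-1} = X_2$ and $P X_2 Q^{-1} = I$ combine to give $Q^{-1} X_2 Q = X_2^{-1}$, while $P X_j Q^{-1} = X_j$ for $j \in \{3, 4\}$ yields $Q^{-1} X_j Q = X_2^{-1} X_j$. Equating determinants in the latter forces $\det(X_2) = 1$, i.e.\ $\lambda\mu = 1$, so one sets $\lambda = \mu^{-1}$. A short computation with the explicit forms of \Cref{ex:k3n2} will give
\[
\mathrm{tr}(X_3) = \mu + \mu^{-1}, \qquad \mathrm{tr}(X_2^{-1} X_3) = \frac{3\mu^2 + 3\mu^{-2} + 2}{4},
\]
so setting $t = \mu + \mu^{-1}$, the trace equation $Q^{-1} X_3 Q = X_2^{-1} X_3$ reduces to $3t^2 - 4t - 4 = (3t+2)(t-2) = 0$. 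The root $t = 2$ forces $\mu = 1 = \lambda$, contradicting distinctness, and so $t = -2/3$---equivalently $3\mu^2 + 2\mu + 3 = 0$---is the claimed necessary constraint.

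For sufficiency, assuming $\lambda = \mu^{-1}$ with $3\mu^2 + 2\mu + 3 = 0$, I would exhibit by direct matrix computation a single $Q \in \GL_2(\C)$ satisfying all three of $Q^{-1} X_2 Q = X_2^{-1}$, $Q^{-1} X_3 Q = X_2^{-1} X_3$, and $Q^{-1} X_4 Q = X_2^{-1} X_4$, and then set $P := Q X_2^{-1}$ to obtain a realization of $(1\ 2)$ in the form of \Cref{lemma:conj}. Since the stabilizer of $1$ (a copy of $\Sigma_3$) together with the transposition $(1\ 2)$ generates $\Sigma_4$, composing with the realizations of the stabilizer yields a full realization map, establishing total symmetry. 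Finally, because $A_i = \nu I + N_i$ with the nilpotent part $N_i$ independent of $\nu$, and because conjugation commutes with adding a scalar matrix, the same $\rho$ witnesses total symmetry for every $\nu \in \C$. The hard part will be the simultaneous solvability: matching traces and determinants ensures each of the three conjugacies $X_j \sim X_2^{-1} X_j$ individually in $\GL_2(\C)$, but producing a common conjugator $Q$ is a genuinely nontrivial check, and this is precisely where the numerical coincidence $3\mu^2 + 2\mu + 3 = 0$ pays off.
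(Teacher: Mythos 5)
Your proposal follows essentially the same strategy as the paper: normalize $X_1 = I$, apply \SC{2} to identify $\{X_2,X_3,X_4\}$ with the noncommutative simplex construction, analyze $\rho((1\,2))$ to force $\det X_2 = 1$ (hence $\lambda = \mu^{-1}$), derive the polynomial constraint on $\mu$, and establish sufficiency by exhibiting an explicit conjugator. The one place you depart from the paper is in how the constraint $3\mu^2+2\mu+3=0$ is obtained: you use the trace identity $\operatorname{tr}(X_3)=\operatorname{tr}(X_2^{-1}X_3)$, which is a shorter and more conceptual route than the paper's, which first solves for the entries of $P$ and $Q$ (its Observation~3) and then extracts the same polynomial from a matrix equation; the tradeoff is that the paper thereby already has the explicit $P,Q$ in hand for the sufficiency check, whereas in your approach that common conjugator must still be produced---a nontrivial computation you correctly flag as the remaining work.
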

\begin{proof}
Suppose that such a set exists. Via \Cref{lemma:conj}, we may conjugate $\mathcal A$ so as to set $X_1 = I$. We consider the restriction of the realization map to the stabilizer of $A_1$. By \Cref{lemma:conj}, in order to stabilize $A_1$, these must be realized by elements of the form $\abcd{P}{*}{0}{P}$, and so we see that $X_2, X_3, X_4$ forms a $3$-element totally symmetric set in $\GL_2(\C)$. By \SC{2}, we must have
\[
X_2 = \abcd{\lambda}{\frac{\mu-\lambda}{2}}{0}{\mu}, \qquad 
X_3 = \abcd{\mu}{0}{\frac{\mu-\lambda}{2}}{\lambda}, \qquad
X_4 = \abcd{\frac{\lambda+\mu}{2}}{\frac{\lambda-\mu}{2}}{\frac{\lambda-\mu}{2}}{\frac{\lambda+\mu}{2}}
\]
for some parameters $\lambda \ne \mu \in \C^\times$. 

We will see that there is a unique choice of $\mu,\lambda$ for which this is possible. This will be accomplished by analyzing the element $\rho((12))$. By \Cref{lemma:conj}, there are elements $P, Q \in \GL_2(\C)$ for which
\[
\rho((12)) = \abcd{P}{*}{0}{Q}.
\]

\para{Observation 1} {\em $X_2 = PQ^{-1}$ and $P^2 = Q^2$.}

These arise from the fact that $\rho((12))$ must exchange $X_1 = I$ and $X_2$.

\para{Observation 2} {\em $\lambda = \mu^{-1}$ and moreover $\mu^2 \ne 1$.} 

We must have $PX_3Q^{-1} = X_3$, so that $P$ and $Q$ are conjugate. Thus $X_2 = PQ^{-1}$ must have determinant $1$, so that $\lambda = \mu^{-1}$. As $\lambda \ne \mu$, we must also have $\mu^2 \ne 1$.\\

Going forward, we set 
\[
\alpha = \frac{\mu - \lambda}{2} = \frac{\mu - \mu^{-1}}{2}.
\]
Note $\alpha \ne 0$. We observe the following algebraic identities:
\begin{equation}\label{equation:alpha}
    2 \alpha \mu = \mu^2 - 1; \qquad 2 \alpha \mu^{-1} = 1 - \mu^{-2}.
\end{equation}

\para{Observation 3} {\em Without loss of generality, $P$ and $Q$ are given as follows, for some $b \in \C$:}
\[
P = \abcd{1}{b}{2}{-1}, \qquad Q = \abcd{\mu^{-1}}{\alpha + \mu b}{2 \mu^{-1}}{-\mu^{-1}}.
\]
To see this, we write $P = \abcd{a}{b}{c}{d}$. From Observation 1, we see that $PX_2 = Q$ and $X_2 Q = P$. From the first of these, we see
\[
Q = \abcd{a}{b}{c}{d} \abcd{\mu^{-1}}{\alpha}{0}{\mu} = \abcd{\mu^{-1} a}{\alpha a + \mu b}{\mu^{-1} c}{\alpha c + \mu d}.
\]
Inserting this into the second,
\begin{align*}
P = \abcd{a}{b}{c}{d} &= \abcd{\mu^{-1}}{\alpha}{0}{\mu} \abcd{\mu^{-1} a}{\alpha a + \mu b}{\mu^{-1} c}{\alpha c + \mu d}\\
&= \abcd{\mu^{-2}a + \alpha \mu^{-1} c}{\alpha \mu^{-1}a + b + \alpha^2 c + \alpha \mu d}{c}{\alpha \mu c + \mu^2 d}.
\end{align*}
From the upper left, we find 
\[
c = \frac{1- \mu^{-2}}{\alpha \mu^{-1}}a = 2a
\]
(the latter holding by the identity \eqref{equation:alpha}). Likewise the bottom right gives 
\[
c = \frac{1-\mu^2}{\alpha \mu}d = -2d.
\]
Necessarily $c \ne 0$ (otherwise $P$ would not be invertible). As $(\gamma P)X(\gamma Q)^{-1} = P X Q^{-1}$ for any $\gamma \ne 0$, we can scale $P$ so that $c = 2$. Then every entry of $Q$ is visibly of the required form except the bottom right: we must show that $2\alpha - \mu = - \mu^{-1}$, but this is immediate from the definition of $\alpha$. 

To proceed, we analyze the condition $PX_3Q^{-1} = X_3$ (or its equivalent form $PX_3 = X_3Q$) more closely. Multiplying,
\begin{align*}
PX_3 &= \abcd{1}{b}{2}{-1}\abcd{\mu}{0}{\alpha}{\mu^{-1}} &=& \abcd{\mu + \alpha b}{\mu^{-1} b}{2 \mu - \alpha}{-\mu^{-1}},\\
X_3Q &= \abcd{\mu}{0}{\alpha}{\mu^{-1}}\abcd{\mu^{-1}}{\alpha + \mu b}{2 \mu^{-1}}{-\mu^{-1}} &=& \abcd{1}{\alpha \mu + \mu^2 b}{\mu^{-1}(\alpha + 2 \mu^{-1})}{\alpha^2 + \alpha \mu b - \mu^{-2}}.
\end{align*}
From the top left, we find $b = \alpha^{-1}(1-\mu)$. Examining the bottom left, we see that $\mu$ satisfies the constraint
\[
2 \mu - \alpha = \mu^{-1}(\alpha + 2\mu^{-1}).
\]
Thus $\mu$ satisfies the expression
\begin{align*}
& (2\mu^2)(2 \mu - \alpha - \alpha \mu^{-1} - 2 \mu^{-2})\\
        &= 4 \mu^3 - 2 \alpha \mu^2  - 2 \alpha \mu -4\\
        &= 4 \mu^3 - \mu(\mu^2 - 1) - (\mu^2 - 1) - 4\\
        &= 3 \mu^3 - \mu^2 + \mu - 3\\
        &= (\mu-1)(3\mu^2 + 2 \mu + 3).
\end{align*}
As $\mu \ne 1$, we conclude that $\mu$ satisfies $3\mu^2 + 2 \mu + 3 =0$.\\

We have found that if $\mathcal A$ exists, it must be of the claimed form. It remains to check that for this choice of $\mu$, the set is indeed totally symmetric. The set $X_2,X_3,X_4$ is totally symmetric in $\End(\C^2)$ under the standard representation; let the realization map be denoted by $\rho(\sigma) = p_\sigma$. Following \Cref{lemma:conjformula}, we see that the assignment
\[
\sigma \mapsto P_\sigma := \begin{pmatrix} p_\sigma & 0\\ 0 &p_\sigma \end{pmatrix}
\]
stabilizes $A_1$ and realizes $\{A_2,A_3,A_4\}$ as a totally symmetric set in $\End(\C^4)$.

It therefore suffices to verify that the element 
\[
\rho((12)) = \abcd{P}{0}{0}{Q}
\]
with $P,Q$ as above, actually does induce the required permutation. In the supplemental Mathematica notebook \cite{mathematica}, we make the following verifications:
\begin{align*}
PX_1Q^{-1} &= X_2\\
PX_2Q^{-1} &= X_1\\
PX_3Q^{-1} &= X_3\\
PX_4Q^{-1} &= X_4,
\end{align*}
which shows that this is the case.
\end{proof}

For the sake of concreteness, here are the matrices comprising $\mathcal A$:
\begin{align*}
    &A_1 = \left(\begin{array}{cc|cc}
    \nu &       & 1     & 0\\
        &\nu    & 0     & 1\\ \hline
        &       & \nu   & \\
        &       &       & \nu
    \end{array} \right)
        & &A_2 = \left(\begin{array}{cc|cc}
    \nu &       & -\mu - \frac{2}{3} & \mu + \frac{1}{3}\\
        &\nu    & 0     & \mu\\ \hline
        &       & \nu   & \\
        &       &       & \nu
    \end{array}\right) \\
        &A_3 = \left(\begin{array}{cc|cc}
    \nu &       & \mu     & 0\\
        &\nu    & \mu+\frac{1}{3}     & -\mu-\frac{2}{3}\\ \hline
        &       & \nu   & \\
        &       &       & \nu
    \end{array}\right)
        & &A_4 = \left( \begin{array}{cc|cc}
    \nu &       & \frac{-1}{3} & -\mu - \frac{1}{3}\\
        &\nu    & -\mu - \frac{1}{3}     & \frac{-1}{3}\\ \hline
        &       & \nu   & \\
        &       &       & \nu
    \end{array}\right).
\end{align*}

\section{Low-dimensional representations of the symmetric group}\label{section:reptheory}
As an application of these methods, we show how they give a quick proof of the fact that the symmetric group $\Sigma_n$ admits no non-abelian representations of dimension $d < n-1$ for $n \ne 4$. The typical way one sees this is to invoke the full machinery of the representation theory of $\Sigma_n$ and deduce it as an indirect corollary of a formula for the dimensions of the irreps. Here, we show that there is a good structural reason for this dimension gap, in light of the fact that $\Sigma_n$ contains large totally symmetric sets.

\begin{proposition}
\label{proposition:sn}
For $n \ne 4$, the symmetric group $\Sigma_n$ admits no non-abelian representations over $\C$ of dimension $d < n-1$.
\end{proposition}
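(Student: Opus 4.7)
The plan is to apply the persistence lemma (Lemma~\ref{lemma:rigid}) to the totally symmetric set $\mathcal{T} = \{(1,i) \mid 2 \le i \le n\} \subset \Sigma_n$ of cardinality $n-1$. Given $\rho : \Sigma_n \to \GL_d(\C)$ with $d < n-1$, persistence yields that $\rho(\mathcal{T})$ is either degenerate or a nondegenerate totally symmetric set in $\End(\C^d)$ of cardinality $n-1$.

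If $\rho(\mathcal{T})$ is degenerate, then every transposition $(1,i)$ maps to a common element $g \in \GL_d(\C)$, which satisfies $g^2 = I$ and $\rho((1,j,i)) = \rho((1,j))\rho((1,i)) = g^2 = I$ for all $i \ne j$. Since the three-cycles $(1,j,i)$ generate $A_n$, $\rho$ factors through $\Sigma_n/A_n \cong \Z/2$, and so is abelian. Otherwise Theorem~\ref{theorem:sizebound} forces $n - 1 \le d + 1$, so $d = n - 2$, and the stronger commutative bound forces $\rho(\mathcal{T})$ to be noncommutative (else $n-1 \le n-2$). By Theorem~\ref{maintheorem:max}, $\rho(\mathcal{T})$ is then isomorphic to the noncommutative simplex construction of \Cref{example:ncsimplex}, or, only when $d = n-2 = 4$ (i.e.\ $n = 6$), possibly to the $\tilde\Sigma_5$ construction.

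To rule out the noncommutative simplex case, set $A_i := \rho((1,i+1))$. These must have order two, and the explicit formula $A_i(v) = \mu v + \tfrac{\lambda - \mu}{n-2}\alpha_i(v) e_i$, applied to $A_i^2 = I$, forces $\{\mu,\lambda\} = \{1,-1\}$. Then $(1,2,3) = (1,3)(1,2)$ has order three in $\Sigma_n$, so $(A_2 A_1)^3 = I$. Using $\alpha_i(e_j) = (n-2)\delta_{ij} - 1$, a direct expansion gives $\operatorname{tr}(A_2 A_1) = n - 6 + \tfrac{4}{(n-2)^2}$. On the other hand, $A_2 A_1$ fixes $\ker\alpha_1 \cap \ker\alpha_2$ of codimension two, while nondegeneracy of $\rho(\mathcal{T})$ prevents $A_2 A_1 = I$ on the complementary $2$-plane; since $\det A_i = -1$ forces $\det(A_2 A_1)=1$ on this $2$-plane, its eigenvalues must be the nontrivial cube roots of unity, contributing trace $-1$, so $\operatorname{tr}(A_2 A_1) = n - 5$. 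Equating the two expressions gives $(n-2)^2 = 4$, i.e.\ $n = 4$, contradicting $n \ne 4$. I expect this trace bookkeeping to be the main technical hurdle; everything else is structural.

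For the remaining case $n = 6$ with $\rho(\mathcal{T}) \cong \mathcal A_{\tilde\Sigma_5}$, the restriction of $\rho$ to the stabilizer of $1 \in [6]$, namely $\Sigma_5 \le \Sigma_6$, is a genuine linear representation of $\Sigma_5$ realizing the $\tilde\Sigma_5$ arrangement on $\C^4$. By Corollary~\ref{corollary:reals5}, any realization map for $\mathcal{W}_{\tilde\Sigma_5}$ lies in the projective class of the basic projective representation of $\Sigma_5$. This class is genuinely projective---it represents a nontrivial element of the Schur multiplier of $\Sigma_5$ and lifts only to the double cover $\tilde\Sigma_5$, not to $\Sigma_5$ itself---so no genuine linear representation of $\Sigma_5$ can realize $\mathcal{W}_{\tilde\Sigma_5}$. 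This contradiction completes the proof.
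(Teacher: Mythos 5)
Your proof is correct and follows the same structural skeleton as the paper's (same totally symmetric set $\{(1,i)\}$, same reduction to $d = n-2$ via \Cref{theorem:sizebound}, same appeal to \Cref{maintheorem:max} to identify $\rho(\mathcal T)$ as either the noncommutative simplex construction or, when $n=6$, the $\tilde\Sigma_5$ construction), but the two case-checking steps are handled differently and in an interesting way. For the noncommutative simplex case, the paper verifies that the braid relation $A_1A_2A_1 = A_2A_1A_2$ fails by comparing $e_1$-coefficients of both sides applied to $e_1$; you instead compute $\operatorname{tr}(A_2A_1)$ twice --- once from the explicit rank-one-perturbation formula, giving $n-6+\tfrac{4}{(n-2)^2}$, and once from the eigenvalue structure forced by $(A_2A_1)^3 = I$, $\det(A_2A_1)=1$, and the $(n-4)$-dimensional common fixed space $\ker\alpha_1\cap\ker\alpha_2$, giving $n-5$. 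Both yield $(n-2)^2=4$. Your trace argument is a bit slicker conceptually while the paper's is more elementary. For the $\tilde\Sigma_5$ case, the paper falls back on an explicit matrix computation verifying $(A_1A_2)^3 \neq I$; you instead observe that $\rho$ restricted to $\Stab(1)\cong\Sigma_5$ would be a \emph{linear} realization map for $\mathcal W_{\tilde\Sigma_5}$, which \Cref{corollary:reals5} forces to lie in the projective class of the basic projective representation --- a class that is genuinely projective and does not lift to $\GL_4(\C)$ on $\Sigma_5$ itself. This is more conceptual and illuminating, though it leans on a standard fact (nontriviality of the Schur class, equivalently that $z\in[\tilde\Sigma_5,\tilde\Sigma_5]$) that the paper establishes only implicitly in \Cref{appendix:s5} via the observation that $z$ acts by $-I$; it would be worth a one-line justification that this prevents a linear lift. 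One small slip: you write $\alpha_i(e_j)=(n-2)\delta_{ij}-1$, but since $\alpha_i(e_i)=n-2$ (as is needed for $A_i(e_i)=-e_i$), the correct formula is $\alpha_i(e_j)=(n-1)\delta_{ij}-1$; your final trace formula $n-6+\tfrac{4}{(n-2)^2}$ is nevertheless correct, so this is evidently a typo rather than a computational error.
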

\begin{proof}
Let $\mathcal A \subset \Sigma_n$ be the $n-1$ element noncommutative totally symmetric set given by
\[
\mathcal A = \{(1,i) \mid 2 \le i \le n\}.
\]
By \Cref{remark:hom}, the image of $\mathcal A$ under any representation $\psi: \Sigma_n \to \GL_d(\C)$ is a totally symmetric set in $\GL_d(\C)$, possibly degenerate. As $\mathcal A$ generates $\Sigma_n$, if $\psi(\mathcal A)$ is degenerate, then necessarily $\psi$ factors through $\Sigma_n^{ab} \cong \Z/2\Z$. Thus, to show that a representation $\psi$ is abelian, it suffices to show that $\psi(\mathcal A)$ is degenerate.

Following \Cref{maintheorem:max}, for $n-1 \ne 5$ there is exactly one non-degenerate totally symmetric set of size $n-1$ in $\GL_{n-2}(\C)$, the noncommutative simplex construction $\Sigma^{nc} \mathcal S_{n-2}$. For $n-1 = 5$ there is the additional possibility $\mathcal A(\tilde \Sigma_5)$ of \Cref{s5construction}. We will show that for $n \ne 4$, it is not possible for a representation $\psi$ to realize $\mathcal A$ as $\Sigma^{nc} \mathcal S_{n-2}$, and that $\mathcal A(\tilde \Sigma_5)$ is likewise infeasible.

The noncommutative simplex construction has two distinct eigenvalues $\lambda, \mu$. As the elements of $\mathcal A$ have order $2$, it follows that $\{\lambda, \mu\} = \{1,-1\}$. Tensoring with the sign representation if necessary, we can assume that $\mu = 1$ and $\lambda = -1$. Then \eqref{equation:ncsimplex} specializes to the formula
\begin{equation}
    \label{equation:aiv}
A_i(v) = v - \tfrac{2}{n-2}\alpha_i(v) e_i
\end{equation}
for the elements $A_i = \psi((1,i))$. For convenience, we record
\[
A_i(e_j) = \begin{cases}
-e_i & i = j\\
e_j + \tfrac{2}{n-2}e_i & i \ne j.
\end{cases}
\]

Observe that in $\Sigma_n$,
\[
(12)(13)(12) = (13)(12)(13) = (23).
\]
We will see that the corresponding relation $A_1 A_2 A_1 = A_2 A_1 A_2$ does not hold when $A_i$ is given by \eqref{equation:aiv}.

We have
\begin{align*}
A_1 A_2 (e_1)   &= A_1 (e_1 + \tfrac{2}{n-2}e_2)\\
                &= -e_1 + \tfrac{2}{n-2}(e_2 + \tfrac{2}{n-2}e_1)\\
                &= \left(\tfrac{4}{(n-2)^2} -1 \right)e_1 + \tfrac{2}{n-2} e_2.
\end{align*}
Thus
\[
A_1 A_2 A_1(e_1) = A_1 A_2 (-e_1) = \left(1-\tfrac{4}{(n-2)^2} \right)e_1 - \tfrac{2}{n-2} e_2,
\]
while
\begin{align*}
A_2 A_1 A_2 (e_1)   &= A_2 \left(\left(\tfrac{4}{(n-2)^2} -1 \right)e_1 + \tfrac{2}{n-2} e_2 \right)\\
&= \left(\tfrac{4}{(n-2)^2} -1 \right) \left(e_1 + \tfrac{2}{n-2}e_2 \right) - \tfrac{2}{n-2}e_2.
\end{align*}
The $e_1$-coefficient of $A_1 A_2 A_1 (e_1)$ is $\left(\tfrac{4}{(n-2)^2} -1 \right)$, while the $e_1$-coefficient of $A_2 A_1 A_2(e_1)$ is $\left(1-\tfrac{4}{(n-2)^2} \right)$. It follows that these can be equal only for $n = 4$ as claimed.

It remains to consider the case where $n = 6$ and $\Sigma_6$ is acting on $\C^4$ such that the totally symmetric set $\{(1\,i)\}$ is realized by $\mathcal A(\tilde \Sigma_5)$. As in the case of the simplex construction, the eigenvalues of $\mathcal A(\tilde \Sigma_5)$ must be $\pm 1$. Using the formulas for $\mathcal A(\tilde \Sigma_5)$ of \Cref{example:s5constructionexplicit}, one finds
\[
A_1 = \left(
\begin{array}{cccc}
 1 & 0 & -\frac{4 i}{\sqrt{3}+3 i} & -\frac{i}{\sqrt{6}} \\
 0 & 1 & -\frac{i}{\sqrt{6}} & \frac{4 i}{\sqrt{3}-3 i} \\
 0 & 0 & -1 & 0 \\
 0 & 0 & 0 & -1 \\
\end{array}
\right) \qquad A_2 = \left(
\begin{array}{cccc}
 -1 & 0 & 0 & 0 \\
 0 & -1 & 0 & 0 \\
 \frac{4 i}{\sqrt{3}-3 i} & \frac{i}{\sqrt{6}} & 1 & 0 \\
 \frac{i}{\sqrt{6}} & -\frac{4 i}{\sqrt{3}+3 i} & 0 & 1 \\
\end{array}
\right),
\]
and from this one computes (see the supplemental Mathematica notebook \cite{mathematica})
\[
(A_1A_2)^3 \ne I,
\]
showing that the relation $((12)(13))^3 = id$ holding in $S_6$ does not hold for the elements $A_1,A_2$.
\end{proof}

\appendix

\section{The $\tilde \Sigma_5$ arrangement}\label{appendix:s5}

Here we present the computations underlying our analysis of the $\tilde \Sigma_5$ arrangement. We will refer throughout to the supplemental Mathematica notebook \cite{mathematica}, in which full details of the calculations are presented.   

\subsection{The basic representation} We begin with a discussion of the {\em basic representation} $V_{basic}^5$, a four-dimensional projective representation of $\Sigma_5$. We follow \cite{HH} as a general reference, but we will use different coordinates that are better adapted to our setting, c.f. \Cref{remark:why}.

\begin{definition}[Representation group $\tilde \Sigma_5$]
The {\em representation group} $\tilde \Sigma_5$ is the extension 
\[
1 \to \Z/2\Z \to \tilde \Sigma_5 \to \Sigma_5 \to 1
\]
which admits the presentation on generators $z, t_{1}, t_{2}, t_{3}, t_{4}$ and relations
\begin{align*}
    &z^2 = 1,\ z \mbox{ central}\\
    &t_{i}^2 = z\qquad &(1 \le i \le 4)\\
    &(t_{i}t_{i+1})^3 = z \qquad &(1 \le i \le 3)\\
    &t_i t_j = z t_j t_i \qquad &(\abs{i-j}\ge 2).
\end{align*}
The map $\tilde \Sigma_5 \to \Sigma_5$ has kernel $\pair {z}$ and sends $t_i$ to the transposition $(i\ i+1)$.
\end{definition}

The {\em basic representation} $V_{basic}^5 \cong \C^4$ is the representation of $\tilde \Sigma_5$ given by $t_i \mapsto T_i$:
\begin{align*}
    T_1 &= \begin{pmatrix}
       0& P_{12}\\P_{12} & 0
    \end{pmatrix}, \qquad 
     &T_2 &= \begin{pmatrix}
       P_{23}& -P_{23}\\0 & -P_{23}
    \end{pmatrix}, \\
    T_3 &= \begin{pmatrix}
       P_{34}& 0\\0 & Q_{34}
    \end{pmatrix}, 
    &T_4 &= \begin{pmatrix}
       P_{45}& 0 \\0 & P_{45}
    \end{pmatrix},
\end{align*}
with, for $\zeta = e^{i \pi/3}$,
\begin{align*}
    P_{12} = P_{23} = \begin{pmatrix}
       0 & -1\\1&0
    \end{pmatrix}, \qquad & 
    P_{34} = \begin{pmatrix}
       0 & -\zeta^2 \\ -\zeta & 0
    \end{pmatrix},&\\
    Q_{34} = 
    \begin{pmatrix}
       0 & -\zeta \\ -\zeta^2 & 0
    \end{pmatrix}, \qquad &
    P_{45} = \frac{i}{\sqrt{3}}\begin{pmatrix}
       \sqrt 2 & 1\\ 1 & -\sqrt 2
    \end{pmatrix}.&
\end{align*}
We recall that we are working here in idiosyncratic coordinates; see \cite[Chapter 6]{HH} for the more customary description.

\begin{lemma}
The assignment $t_i \mapsto T_i$ defines a homomorphism $\rho: \tilde \Sigma_5 \to \GL_4(\C)$, i.e. gives rise to a representation of $\tilde \Sigma_5$.
\end{lemma}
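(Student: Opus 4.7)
The plan is to verify by direct matrix computation that the assignment $t_i \mapsto T_i$ extends to a homomorphism from $\tilde \Sigma_5$. Since the relation $t_i^2 = z$ forces $\rho(z) = T_i^2$ for each $i$, the first step is to compute $T_i^2$ for $i = 1,2,3,4$ and observe that they all equal $-I_4$; one then sets $\rho(z) = -I_4$, and centrality of $\rho(z)$ is automatic since $-I_4$ is scalar. It remains to check the braid relations $(t_i t_{i+1})^3 = z$ for $1\le i \le 3$ and the far-commutation relations $t_i t_j = z\, t_j t_i$ for $|i-j|\ge 2$.

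For the square computations I would exploit the block structure. The matrix $T_1$ has antidiagonal blocks $P_{12}$, so $T_1^2 = \operatorname{diag}(P_{12}^2, P_{12}^2)$, and $P_{12}^2 = -I_2$ by a $2\times 2$ check. Similarly $T_4^2 = \operatorname{diag}(P_{45}^2, P_{45}^2)$ with $P_{45}^2 = \tfrac{-1}{3}(\sqrt{2}^2 + 1)I_2 = -I_2$. The diagonal $T_3$ gives $T_3^2 = \operatorname{diag}(P_{34}^2, Q_{34}^2)$, with $P_{34}^2 = Q_{34}^2 = \zeta^3 I_2 = -I_2$ using $\zeta^3 = e^{i\pi} = -1$. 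For $T_2$ the upper-triangular structure produces an upper-right entry $P_{23}(-P_{23}) + (-P_{23})(-P_{23}) = 0$ in $T_2^2$, and both diagonal blocks reduce to $P_{23}^2 = -I_2$.

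For the braid relations I would compute the three products $T_1 T_2$, $T_2 T_3$, $T_3 T_4$ blockwise, cube each, and check the result is $-I_4$; and for the far-commutation relations I would verify the three cases $(i,j) \in \{(1,3),(1,4),(2,4)\}$ by a similar blockwise multiplication. The main obstacle is not conceptual but rather one of bookkeeping: the off-diagonal $-P_{23}$ block of $T_2$ and the mismatched diagonal blocks $P_{34}, Q_{34}$ of $T_3$ generate cross terms whose cancellation must be tracked carefully, and the coordinates used here differ from those of the standard reference \cite{HH}, so the relations cannot simply be cited. In practice the cleanest route is to execute these calculations symbolically, as in the supplemental Mathematica notebook \cite{mathematica}; once the squares, the three braid relations, and the three far-commutation relations are all verified, the universal property of the given presentation of $\tilde \Sigma_5$ produces the required homomorphism $\rho$.
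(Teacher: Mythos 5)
Your proposal is correct and takes essentially the same route as the paper, which simply reads ``Direct verification: see the supplemental Mathematica notebook.'' You add useful explicit detail for the $T_i^2 = -I_4$ computations (all of which check out), and you rightly note that the braid and far-commutation relations are bookkeeping best handled symbolically---exactly what the paper does.
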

\begin{proof}
Direct verification: see the supplemental Mathematica notebook \cite{mathematica}.
\end{proof}

According to \cite[Theorem 6.2]{HH}, $V_{basic}^5$ is an irreducible representation of $\tilde \Sigma_5$, and so the central element $z$ acts by a scalar; one checks that $z$ acts by $-I$. Thus $V_{basic}^5$ is indeed a projective representation of $\Sigma_5$.

\subsection{The $\tilde \Sigma_5$ arrangement}
Here we present the detailed computations for the $\tilde \Sigma_5$ arrangement, as constructed in \Cref{example:s5tilde}.

\begin{example}\label{example:s5tildeexplicit}
Using the explicit description of the action of $\Sigma_5$ on $V_{basic}^5$ as given in \cite[Chapter 6]{HH} and the knowledge that $V_{basic}^5$ decomposes as a pair of non-isomorphic $\Sigma_4$-representations, one can compute the projection operators $\pi_i: V_{basic}^5 \to W_i$ and arrive at an explicit set of representatives for $\mathcal W_{\tilde \Sigma_5}$ as the images of the following $4 \times 2$ matrices (represented as $2\times 2$ blocks):
\[
W_1= \binom{I}{0}, \quad W_2= \binom{0}{I}, \quad W_3= \binom{I}{I}, \quad W_4 = \binom{A_4}{I}, \quad W_5= \binom{A_5}{I},
\]
where
\[
A_4 = \begin{pmatrix}
   \zeta & 0 \\ 0 & \zeta^{-1}
\end{pmatrix},\qquad
A_5 = \left(
\begin{array}{cc}
 \frac{1}{6} \left(3+i \sqrt{3}\right) & \frac{\sqrt 2 i}{\sqrt{3}} \\
 \frac{\sqrt 2 i}{\sqrt{3}} & \frac{1}{6} \left(3-i \sqrt{3}\right) \\
\end{array}
\right)
\]
for $\zeta = e^{i \pi /3} =  (1 + i\sqrt 3)/2$ a primitive sixth root of unity.
\end{example}

\begin{remark}\label{remark:why}
In the ``standard'' coordinates for $V_{basic}^5$ (as in, e.g. \cite{HH}), the coordinates for $W_i$ would be less suitable for the kinds of analysis we perform on them in \Cref{section:abac} and in \Cref{subsection:halfdim} below. It is for this reason that we change coordinates so as to represent $\mathcal W$ as above.
\end{remark}

\subsection{The $\tilde \Sigma_5$ construction}
Here we describe a procedure for computing explicit matrices for the $\tilde \Sigma_5$ construction of \Cref{s5construction}.
\begin{example}\label{example:s5constructionexplicit}
In \Cref{s5construction}, the element $A_i$ in the $\tilde \Sigma_5$ construction was specified by taking eigenspaces $E_{\lambda}^i = (V_{basic}^4)_i$ and $E_\mu^i = (V_{basic}^{4,a})_i$, where the arrangement $\{(V_{basic}^4)_i\}$ is the $\tilde \Sigma_5$ arrangement $\mathcal W$, and the arrangement $\{V_{basic}^{4,a})_i$ is the arrangement formed by the other irreducible subrepresentation of $V_{basic}^5$ as a projective $\Sigma_4$ representation. Thus we must describe this complementary arrangement $\mathcal W^a$. 

An explicit description of $\mathcal W^a$ in our coordinates is rather complicated. We present it in full in the supplemental Mathematica notebook \cite{mathematica}, along with a full description of the matrices in the $\tilde \Sigma_5$ construction. Here, we note that the first space $W_1^a$ complementary to $W_1$ can be given by
\[
W_1^a = \left(
\begin{array}{cc}
 \frac{i}{2 \sqrt{6}} & \frac{2 i}{\sqrt{3}+3 i} \\
 \frac{2}{3+i \sqrt{3}} & \frac{i}{2 \sqrt{6}} \\
 0 & 1 \\
 1 & 0 \\
\end{array}
\right)
\]
Defining $M \in \GL_4(\C)$ by $M = (W_1 \mid W_1^a)$, one can then construct $\mathcal A_{\tilde \Sigma_5}$ via
\[
A_i = T_{\sigma_i} M \begin{pmatrix}
 \lambda I & \\ & \mu I
\end{pmatrix} M^{-1} T_{\sigma_i}^{-1},
\]
where $T_{\sigma_i} \in \tilde \Sigma_5$ is any element such that the associated $\sigma_i \in \Sigma_5$ satisfies $\sigma_i(1) = i$.
\end{example}

\subsection{Half-dimensional arrangements}\label{subsection:halfdim}
Here we continue with the analysis of half-dimensional arrangements begun in \Cref{section:abac}. We recall that the following results were established there:

\para{\Cref{lemma:coords}} {\em 
Let $W_1, \dots, W_k \subset \C^{2d}$ be $d$-planes such that $W_i \oplus W_j = \C^{2d}$ for any pair of distinct indices $i\ne j$. Then there exist coordinates on $\C^{2d}$ in which $\{W_i\}$ admit $\GL_d(\C)$-coset representatives of the following form:
\[
W_1 = \binom{I}{0}, \qquad W_2 = \binom{0}{I}, \qquad W_3 = \binom{I}{I}, \qquad W_i = \binom{A_i}{I}\ (i \ge 4)
\]
with $A_i$ invertible for $i \ge 4$ (each indicated block is of size $d \times d$). Moreover, for any $M \in \GL_d(\C)$, the matrices $A_i$ for $i \ge 4$ can be sent to the conjugates $M A_i M^{-1}$ while keeping the representatives $W_1, W_2, W_3$ fixed.}\\

\para{\Cref{lemma:halfdim}} {\em 
Let $\mathcal W = \{W_1, \dots, W_k\}$ be a maximal totally symmetric arrangement of $d$-planes in $\C^{2d}$ with the property that $\C^{2d} = W_i \oplus W_j$ for any pair of distinct indices $i \ne j$. Then in the coordinates of \Cref{lemma:coords}, the matrices $A_i$ for $i \ge 4$ form a totally symmetric set in $\GL_d(\C)$ of size $k-3$. In particular, letting $M(d)$ denote the maximal cardinality of a (non-commutative) totally symmetric set in $\End(\C^d)$, then $k \le M(d) + 3$.}\\

\para{\Cref{lemma:evals}} {\em Let $\mathcal W = \{W_1, \dots, W_k\}$ be a maximal totally symmetric arrangement of $d$-planes in $\C^{2d}$ with the property that $\C^{2d} = W_i \oplus W_j$ for any pair of distinct indices $i \ne j$, and let $\{A_4, \dots, A_k\} \subset \GL_d(\C)$ denote the associated totally symmetric set of \Cref{lemma:halfdim}. Let $\Lambda = \{\lambda_1, \dots, \lambda_d\}$ denote the set of eigenvalues of any $A_i$, counted with multiplicity. Then $\Lambda$ is invariant under the involutions $\alpha(\lambda) = \lambda^{-1}$ and $\beta(\lambda) = 1 - \lambda$.}\\

We now continue in the specialized setting of $d = 2$.

\begin{lemma}\label{lemma:d2}
Let $\mathcal W = \{W_1, \dots, W_k\}$ be a nondegenerate totally symmetric arrangement of $2$-planes in $\C^4$ with the property that $\C^4 = W_i \oplus W_j$ for any pair of distinct indices $i \ne j$. Assume the classification \SC{2} of totally symmetric sets of $3$ elements in $\GL_2(\C)$. Then $k \le 5$.
\end{lemma}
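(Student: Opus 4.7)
Suppose for contradiction that $k \ge 6$. I will show this is impossible. By passing to the first six elements, I may assume $k = 6$ (any $6$-element sub-collection inherits total symmetry via the stabilizer of the complement in $\Sigma_k$, and pairwise complementarity is inherited). By \Cref{lemma:coords} and \Cref{lemma:halfdim}, the arrangement can be put in the normal form $W_1 = \binom{I}{0}$, $W_2 = \binom{0}{I}$, $W_3 = \binom{I}{I}$, $W_i = \binom{A_i}{I}$ for $i \in \{4, 5, 6\}$, with $\{A_4, A_5, A_6\}$ a $3$-element totally symmetric set in $\GL_2(\C)$. By the hypothesis \SC{2}, this set must be the noncommutative simplex construction of \Cref{ex:k3n2}, determined by two distinct parameters $\lambda, \mu \in \C$.

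The next step is to pin down $\lambda$ and $\mu$ using \Cref{lemma:evals}, which forces the multiset $\{\lambda, \mu\}$ to be invariant under both involutions $x \mapsto x^{-1}$ and $x \mapsto 1-x$. A short case analysis shows that the alternative $\{\lambda,\mu\} = \{1,-1\}$ is inconsistent (its image under $x \mapsto 1-x$ is $\{0, 2\}$), so instead $\lambda\mu = 1$ and $\lambda + \mu = 1$. Thus $\lambda, \mu$ are the two primitive sixth roots of unity and each $A_i$ satisfies $A_i^2 - A_i + I = 0$; equivalently $A_i^{-1} = I - A_i$ for $i \in \{4, 5, 6\}$.

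I next attempt to realize the transposition $(1\,4) \in \Sigma_6$, which must swap $W_1 \leftrightarrow W_4$ and fix $W_2, W_3, W_5, W_6$, by a block matrix $P = \abcd{P_{11}}{P_{12}}{P_{21}}{P_{22}}$ with $2 \times 2$ blocks. The conditions at $W_2, W_3$ and the swap $W_1 \leftrightarrow W_4$ force $P_{12} = 0$, $P_{11} = A_4 X$, $P_{22} = -X A_4$, where $X := P_{21}$ must satisfy $A_4 X A_4 = X$ (using the identity $I - A_4 = A_4^{-1}$ to convert the intertwining condition coming from $W_3$). A direct calculation shows these solutions form a $2$-parameter family $X = \abcd{a}{b}{2a}{-a}$, invertible precisely when $a(a + 2b) \ne 0$.

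The final step is to impose that $P$ additionally fix $W_5$, which translates to the matrix equation $A_4 X A_5 + A_5 X A_4 = A_5 X A_5$, and to show that no invertible $X$ in the above family satisfies it. Reading off the $(1,1)$ and $(1,2)$ entries using the explicit form of $A_4, A_5$ from \Cref{ex:k3n2} yields the two linear conditions $a + 6b = 0$ and $3a + 4b = 0$, whose only common solution is $a = b = 0$. This forces $X = 0$, contradicting invertibility, so the transposition $(1\,4)$ cannot be realized and we conclude $k \le 5$. The main obstacle is executing the block-matrix algebra cleanly; the simplification $A_i^{-1} = I - A_i$ is the key, collapsing several would-be inversion steps into additions and keeping the final $2 \times 2$ computation manageable.
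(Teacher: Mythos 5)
Your proof takes essentially the same approach as the paper's: reduce to the normal form of \Cref{lemma:coords}, apply \Cref{lemma:halfdim} and \SC{2} to pin $\{A_4, A_5, A_6\}$ to the noncommutative simplex construction, invoke \Cref{lemma:evals} to force the eigenvalues to be the primitive sixth roots of unity, then write down a single transposition and derive an over-determined linear system whose only solution is degenerate. The paper uses the transposition $(3\,6)$, deduces the block form and that the off-diagonal block $Y$ must swap the two eigenlines of $A_6$, and then reads the contradiction off the constraint $P W_4 = W_4$; you use $(1\,4)$, deduce the form $X = \bigl(\begin{smallmatrix} a & b \\ 2a & -a \end{smallmatrix}\bigr)$ from $A_4 X A_4 = X$, and read the contradiction off the constraint $P W_5 = W_5$. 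These are interchangeable choices within the same strategy, and your derivation of the block constraints and the equation $A_4 X A_5 + A_5 X A_4 = A_5 X A_5$ is correct.

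One small arithmetic slip worth flagging: the $(1,1)$-entry of $A_4 X A_5 + A_5 X A_4 - A_5 X A_5$ is $\tfrac{7}{4}a + \tfrac{3}{2}b$, giving the condition $7a + 6b = 0$, not $a + 6b = 0$ as you wrote. Your $(1,2)$ condition $3a + 4b = 0$ is correct, and either version of the $(1,1)$ condition combined with $3a+4b=0$ still forces $a = b = 0$, so the conclusion (and hence the proof) is unaffected.
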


\begin{proof}
Suppose for the sake of contradiction that $k \ge 6$. We choose coordinates as in \Cref{lemma:coords}
\[
W_1 = \binom{I}{0}, \qquad W_2 = \binom{0}{I}, \qquad W_3 = \binom{I}{I}, \qquad W_i = \binom{A_i}{I}\ (i \ge 4)
\]
with $\{A_4, A_5, A_6\}$ forming a totally symmetric set in $\GL_2(\C)$ via \Cref{lemma:halfdim}. The hypothesis \SC{2} asserts that $\{A_4, A_5, A_6\}$ must arise via the noncommutative simplex construction. Following \Cref{ex:k3n2}, there is a change of coordinates $M: \C^2 \to \C^2$ and eigenvalues $\lambda, \mu \in \C^\times$ under which the conjugates of $A_4,A_5,A_6$ by $M$ are given by
\begin{equation}\label{equation:a456}
MA_4M^{-1}  = \abcd{\lambda}{\frac{\mu-\lambda}{2}}{0}{\mu} \qquad 
MA_5M^{-1}  = \abcd{\mu}{0}{\frac{\mu-\lambda}{2}}{\lambda} \qquad
MA_6M^{-1}  = \abcd{\frac{\lambda+\mu}{2}}{\frac{\lambda-\mu}{2}}{\frac{\lambda-\mu}{2}}{\frac{\lambda+\mu}{2}}.
\end{equation}
Via \Cref{lemma:coords} we can replace $A_4,A_5, A_6$ by these conjugates; for simplicity we will continue to refer to these matrices as $A_4, A_5, A_6$. 

Let $\Lambda = \{\lambda_1,\lambda_2\}$ denote the roots of the characteristic polynomial of $A_i$, counted with multiplicity. \Cref{lemma:evals} is quickly seen to force $\Lambda = \{\zeta, \zeta^{-1}\}$ to be the primitive sixth roots of unity. 

To show that $k \ge 6$ is impossible, we consider an element $P_{36} \in \GL_4(\C)$ realizing the permutation $(36) \in \Sigma_k$. As this fixes $W_1, W_2$, it is of the form
\[
P_{36} = \begin{pmatrix}
    X & \\ & Y
\end{pmatrix}
\]
for some $X, Y \in \GL_2(\C)$. As $P_{36}W_3 = W_6$ as $\GL_2(\C)$-cosets, there is $Z \in \GL_2(\C)$ such that
\[
\binom{X}{Y} = \binom{A_6Z}{Z};
\]
it follows that $Z = Y$ and hence $X = A_6 Y$. Then as $P_{36}W_6 = W_3$,
\[
A_6 Y A_6 = Y,
\]
and hence $Y$ induces a conjugacy between $A_6$ and $A_6^{-1}$. Such $Y$ must exchange the $\zeta$ and $\zeta^{-1}$ eigenspaces of $A_6$, which are spanned by $e_1 + e_2$ and $e_1-e_2$, respectively. It follows that $Y$ is of the form
\begin{equation}\label{Y}
Y = \begin{pmatrix}
c+d&c-d\\d-c&-c-d
\end{pmatrix}
\end{equation}
for elements $c,d\in \C^\times$.

The condition $P_{36} W_4 = W_4$ gives rise to the equation
\[
 \begin{pmatrix}
    A_6 Y & \\ & Y
\end{pmatrix} \begin{pmatrix}
   A_4 \\ I
\end{pmatrix} = \begin{pmatrix}
   A_6 Y A_4 \\ Y
\end{pmatrix} =  \begin{pmatrix}
    A_4Z \\Z
\end{pmatrix}
\]
for some $Z \in \GL_2(\C)$, from which it follows that $Z = Y$ and ultimately that
\[
A_6 Y A_4 = A_4 Y.
\]
Combining the explicit expressions for $A_4, A_6$ of \eqref{equation:a456} (taking $\lambda = \zeta$ and $\mu = \zeta^{-1}$) with the expression for $Y$ of \eqref{Y} shows, after a calculation, that 
\[
0 = A_6 Y A_4 - A_4 Y = 
\begin{pmatrix}
 \tfrac 1 2 \left((1-2 \sqrt 3 i)c +(-2+ \sqrt 3 i) d \right)& \tfrac{-1}{2}\zeta \left(7 c + \zeta d \right)\\
* & *
\end{pmatrix}.
\]
The only solutions to this are $c = d = 0$, contrary to the requirement that $c, d \in \C^\times$. See the supplemental Mathematica notebook \cite{mathematica}.
\end{proof}

\begin{lemma}\label{lemma:d2unique}
In the setting of \Cref{lemma:d2}, if $\mathcal W = \{W_1, \dots, W_5\}$ is of maximal size, then $\mathcal W$ is the $\tilde \Sigma_5$ arrangement.
\end{lemma}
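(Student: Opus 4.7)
The setup carries over from Lemma~\ref{lemma:d2}: by Lemma~\ref{lemma:coords} we may choose coordinates in which
\[
W_1 = \binom{I}{0}, \quad W_2 = \binom{0}{I}, \quad W_3 = \binom{I}{I}, \quad W_4 = \binom{A_4}{I}, \quad W_5 = \binom{A_5}{I},
\]
with residual freedom being simultaneous conjugation of $(A_4,A_5)$ by some $M \in \GL_2(\C)$. My plan is to first repeat the eigenvalue analysis from Lemma~\ref{lemma:d2}, now applied to a \emph{two}-element totally symmetric set in $\GL_2(\C)$: Lemma~\ref{lemma:evals} plus a short case check on the involutions $\lambda \mapsto \lambda^{-1}$ and $\lambda \mapsto 1 - \lambda$ rules out both the repeated-eigenvalue case and the ``both fixed'' cases, leaving only the possibility that the eigenvalue multiset is $\{\zeta, \zeta^{-1}\}$ with $\zeta = e^{i\pi/3}$. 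In particular each $A_i$ has distinct eigenvalues, so I would use the residual gauge to normalize $A_4 = \mathrm{diag}(\zeta, \zeta^{-1})$.

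Next I would analyze the transposition $(4\,5)$. Any realization $P_{45}$ preserves $W_1, W_2, W_3$, forcing $P_{45} = \mathrm{diag}(X, X)$, and the swap $W_4 \leftrightarrow W_5$ then yields $A_5 = X A_4 X^{-1}$ with $X^2$ centralizing $A_4$. Since $A_4$ has simple spectrum, $X^2$ is diagonal; if $X$ itself were diagonal then $A_5 = A_4$, violating nondegeneracy, so $X$ must be traceless: $X = \abcd{a}{b}{c}{-a}$ with $a^2 + bc \ne 0$. This presents $A_5$ as a family parametrized by $(a : b : c) \in \P^2$; the residual diagonal gauge, which acts by $(b, c) \mapsto (\mu \nu^{-1} b,\ \mu^{-1} \nu c)$, makes $bc/a^2$ essentially the only conjugation-invariant parameter.

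To pin down this parameter, I would invoke a second transposition, say $(1\,4)$, paralleling the use of $(3\,6)$ in the proof of Lemma~\ref{lemma:d2}. A realization $P_{14}$ preserves $W_2$ and therefore has block lower-triangular form $\abcd{P}{0}{R}{S}$. The four constraints $P_{14} W_1 = W_4$, $P_{14} W_4 = W_1$, $P_{14} W_3 = W_3$, $P_{14} W_5 = W_5$ express $P$ and $S$ in terms of a single auxiliary block $R$, force $R$ to be off-diagonal via the eigenvalue calculus for $A_4$ (the equation $A_4 R + R A_4 = R$ combined with $2\zeta - 1, 2\zeta^{-1} - 1 \ne 0$), and collapse to a single matrix identity $A_4 R A_5 + A_5 R A_4 = A_5 R A_5$. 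Substituting the traceless form of $A_5$ yields a polynomial system in $(a, b, c)$; after dismissing the $a = 0$ branch (which forces $R = 0$ and hence $P_{14}$ non-invertible), I expect the unique solution to be $bc/a^2 = 1/2$, which by direct comparison with Example~\ref{example:s5tildeexplicit} is precisely the $\tilde\Sigma_5$ arrangement.

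The main obstacle will be the symbolic algebra: the polynomial system arising from $(1\,4)$ is coupled and the cleanest route to finish is a Mathematica calculation, in analogy with (and as a direct extension of) the computation supporting Lemma~\ref{lemma:d2}.
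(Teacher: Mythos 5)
Your proposal is essentially correct and takes a genuinely different route to the same answer. Both proofs start identically: by Lemma~\ref{lemma:coords}, Lemma~\ref{lemma:halfdim}, and Lemma~\ref{lemma:evals}, the matrices $A_4, A_5$ form a $2$-element totally symmetric set with eigenvalue multiset $\{\zeta, \zeta^{-1}\}$, and one normalizes $A_4 = \mathrm{diag}(\zeta, \zeta^{-1})$. From there the paths diverge. The paper simply writes $A_5 = M A_4 M^{-1}$ with $M = \abcd{a}{b}{c}{d}$ normalized so that $\det M = -3$, then extracts a single polynomial constraint from the transposition $(3\,4)$: a realization $P_{34}$ must have the form $\mathrm{diag}(A_4 Y, Y)$ with $Y$ anti-diagonal, and $P_{34}W_5 = W_5$ forces $A_4 Y A_5 = A_5 Y$, which after computation gives $ad + 2bc = 0$ and hence $ad = -2$, $bc = 1$. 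You instead first squeeze information out of $(4\,5)$, deducing that the conjugating matrix $X$ with $A_5 = X A_4 X^{-1}$ can be taken traceless (correct: $P_{45} = \mathrm{diag}(X, X)$ forces $X^2$ to centralize $A_4$ and so be diagonal, and a non-diagonal matrix with diagonal square is traceless), and then pin down the conjugation invariant $bc/a^2$ via $(1\,4)$. Your derivation of the block lower-triangular form of $P_{14}$, the identity $A_4 R + R A_4 = R$, the fact that this forces $R$ off-diagonal (using $\zeta + \zeta^{-1} = 1$ and $2\zeta \ne 1$), and the final constraint $A_4 R A_5 + A_5 R A_4 = A_5 R A_5$ are all right. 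And your expected answer $bc/a^2 = 1/2$ is indeed correct: the $(1,1)$-entry of $X A_4 X^{-1}$ equals $\frac{\zeta + t\zeta^{-1}}{1+t} = \tfrac{1}{2} + \tfrac{i\sqrt 3}{2}\cdot\tfrac{1-t}{1+t}$ for $t = bc/a^2$, and $t = 1/2$ gives $\tfrac{1}{6}(3 + i\sqrt 3)$, matching Example~\ref{example:s5tildeexplicit} after the same residual diagonal gauge adjustment the paper uses.

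What your route buys you is a cleaner invariant-theoretic parametrization: after the $(4\,5)$ normalization there is a single scalar $bc/a^2$ to determine, and no determinant bookkeeping. What it costs is that you must work through an extra transposition, and you must handle the lower-triangular block structure of $P_{14}$ rather than the diagonal-times-antidiagonal structure of $P_{34}$. The one unfinished step is the claimed uniqueness of the solution $bc/a^2 = 1/2$ to the system coming from $(1\,4)$; you acknowledge this would be done symbolically. That is acceptable — the paper defers its analogous check (the matrix equation $A_4 Y A_5 = A_5 Y$) to Mathematica as well — but the proposal should be understood as a plan with a computational gap yet to be closed rather than a complete proof.
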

\begin{proof}
Arguing as in \Cref{lemma:d2}, a combination of \Cref{lemma:coords,lemma:halfdim,lemma:evals} show that $A_4, A_5$ form a $2$-element totally symmetric set with eigenvalues $\zeta, \zeta^{-1}$. We can choose coordinates in which 
\[
A_4 = \begin{pmatrix}\zeta &\\&\zeta^{-1}\end{pmatrix}, \qquad A_5 = M A_4 M^{-1}
\]
for some
\[
M = \begin{pmatrix}a&b\\c&d\end{pmatrix} \in \GL_2(\C).
\]
Adjusting $M$ by a scalar matrix, we assume for later convenience that $\det(M) = ad-bc = -3$. We examine a matrix $P_{34} \in \GL_4(\C)$ realizing the permutation $(34) \in \Sigma_5$. Arguing as in \Cref{lemma:d2}, the effects of $P_{34}$ on the elements $W_1,W_2, W_3$ together force $P_{34}$ to be of the form
\[
P_{34} = \begin{pmatrix}
   A_4 Y &\\ & Y
\end{pmatrix}
\]
for some $Y \in \GL_2(\C)$. From the condition $P_{34} W_4 = W_3$, we see that $A_4 Y A_4 = Y$, and so, as in \Cref{lemma:d2}, $Y$ conjugates $A_4$ to $A_4^{-1}$ and so must be of the form
\[
Y = \begin{pmatrix}
   0 & q\\p&0
\end{pmatrix}
\]
for $p, q \in \C^\times$. 

Arguing as in \Cref{lemma:d2}, the condition $P_{34}W_5 = W_5$ leads to the equality
\[
A_4 Y A_5 = A_5Y.
\]
Computing explicitly (see the supplemental Mathematica notebook \cite{mathematica}),
\[
0 = A_4 Y A_5 - A_5 Y=\left(
\begin{array}{cc}
 * & \frac{i \left(\sqrt{3}+i\right) q (a d+2 b c)}{6} \\
* & *
\end{array}
\right)
\]
and hence $ad + 2bc = 0$. As we fixed $ad-bc = -3$, it follows that $bc = 1$ and so $ad = -2$. Inserting these into $M$ shows that $A_5 = M A_5 M^{-1}$ has the form
\[
A_5 = \left(
\begin{array}{cc}
 \frac{1}{6} \left(3+i \sqrt{3}\right) & \frac{i a b}{\sqrt{3}} \\
 \frac{2 i}{\sqrt{3} a b} & \frac{1}{6} \left(3-i \sqrt{3}\right) \\
\end{array}
\right).
\]

To finish the argument, we make one final change of coordinates. The matrix $A_4$ is centralized by any diagonal matrix. Conjugating $A_5$ by $\abcd{1}{0}{0}{ab/\sqrt 2}$ sends $A_5$ to
\[
A_5' = \left(
\begin{array}{cc}
 \frac{1}{6} \left(3+i \sqrt{3}\right) & \frac{\sqrt 2 i}{\sqrt{3}} \\
 \frac{\sqrt 2 i}{\sqrt{3}} & \frac{1}{6} \left(3-i \sqrt{3}\right) \\
\end{array}
\right),
\]
which visibly realizes $\mathcal W$ in the form given in \Cref{example:s5tildeexplicit}.
\end{proof}

\para{The stabilizer of the $\mathbf{\tilde \Sigma_5}$ arrangement} 
 As a counterpart to the discussion in \Cref{lemma:stabstd}, we show here that the $\tilde \Sigma_5$ arrangement also has minimal stabilizer.
 
 \begin{lemma}\label{lemma:stabs5}
$\Stab(\mathcal W_{\tilde \Sigma_5}) = \C^\times I$.
 \end{lemma}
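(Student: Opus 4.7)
The plan is to use the explicit coset representatives for $\mathcal W_{\tilde\Sigma_5}$ given in \Cref{example:s5tildeexplicit} and successively extract constraints on a stabilizing element $M \in \GL_4(\C)$ by examining the conditions $MW_i = W_i$ one index at a time. Since the $W_i$ are presented as $\GL_2(\C)$-cosets of $4 \times 2$ matrices, the condition $MW_i = W_i$ translates to the existence of $R_i \in \GL_2(\C)$ with $M W_i = W_i R_i$, which will become increasingly restrictive.

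First I would use $MW_1 = W_1$ and $MW_2 = W_2$. Writing $M$ in $2 \times 2$ block form, the conditions that $M$ preserve $\binom{I}{0}$ and $\binom{0}{I}$ immediately force the off-diagonal blocks to vanish, so $M = P \oplus Q$ for some $P, Q \in \GL_2(\C)$. Next, $MW_3 = W_3$ gives $\binom{P}{Q} = \binom{R}{R}$ as a coset, forcing $P = Q$. Now $MW_4 = W_4$ reads $\binom{P A_4}{P} = \binom{A_4 R}{R}$, so $R = P$ and hence $P A_4 = A_4 P$; since $A_4 = \mathrm{diag}(\zeta,\zeta^{-1})$ has distinct eigenvalues, its centralizer in $\GL_2(\C)$ is the group of diagonal matrices, so $P$ is diagonal.

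Finally, $MW_5 = W_5$ yields $P A_5 = A_5 P$. Writing $P = \mathrm{diag}(p_1,p_2)$ and comparing off-diagonal entries, commutation forces $(p_1-p_2)(A_5)_{12} = 0$ and $(p_2-p_1)(A_5)_{21} = 0$; since the off-diagonal entries of $A_5$ are the nonzero scalar $\tfrac{\sqrt 2 i}{\sqrt 3}$, we conclude $p_1 = p_2$, so $P$ is scalar and hence $M \in \C^\times I$.

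The argument is almost mechanical once the explicit representatives are in hand; the only step that requires a small input is the final one, where we use that $A_5$ is not simultaneously diagonalizable with $A_4$ (equivalently, has nonzero off-diagonal entries in the basis diagonalizing $A_4$). Conceptually this is the statement that the common centralizer of the pair $\{A_4,A_5\}$ in $\GL_2(\C)$ is $\C^\times I$, which is the real content of the lemma and which, in combination with the block-diagonal reductions from $W_1,W_2,W_3$, immediately yields $\Stab(\mathcal W_{\tilde\Sigma_5}) = \C^\times I$.
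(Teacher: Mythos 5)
Your proof is correct and follows essentially the same line as the paper: use the explicit coset representatives to reduce the stabilizer to matrices of the form $X\oplus X$ (via $W_1,W_2,W_3$), then observe that preserving $W_4$ and $W_5$ forces $X$ to centralize both $A_4$ and $A_5$, whose common centralizer is scalar. You simply spell out the block computations a bit more explicitly than the paper does.
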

 \begin{proof}
 We recall the coordinate representation of $\mathcal W_{\tilde \Sigma_5}$ of \Cref{example:s5tildeexplicit}:
 \[
W_1= \binom{I}{0}, \quad W_2= \binom{0}{I}, \quad W_3= \binom{I}{I}, \quad W_4 = \binom{A_4}{I}, \quad W_5= \binom{A_5}{I},
\]
where
\[
A_4 = \begin{pmatrix}
   \zeta & 0 \\ 0 & \zeta^{-1}
\end{pmatrix},\qquad
A_5 = \left(
\begin{array}{cc}
 \frac{1}{6} \left(3+i \sqrt{3}\right) & \frac{\sqrt 2 i}{\sqrt{3}} \\
 \frac{\sqrt 2 i}{\sqrt{3}} & \frac{1}{6} \left(3-i \sqrt{3}\right) \\
\end{array}
\right)
\]
 If $M \in \GL_4(\C)$ fixes the subspaces $W_1, W_2, W_3$, it must be of the form
 \[
M = \begin{pmatrix}
   X & \\ & X
\end{pmatrix}
 \]
 for $X \in \GL_2(\C)$. If $M$ moreover fixes $W_4$ and $W_5$, then $X$ must commute with both $A_4$ and $A_5$. As the eigenspaces for $A_4$ and $A_5$ are distinct, it follows that $X$ must be scalar, and hence $M$ is scalar as well.
 \end{proof}
 
As an immediate corollary, we obtain the counterpart to \Cref{lemma:mustbestd}.
\begin{corollary}\label{corollary:reals5}
There is a unique projective class $\overline \rho: \Sigma_5 \to \GL_4(\C) \to \PGL_4(\C)$ of realization map for $\mathcal W_{\tilde \Sigma_5}$, realized by the basic projective representation of $\Sigma_5$.
\end{corollary}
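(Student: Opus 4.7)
The plan is to deduce this as an essentially immediate consequence of \Cref{lemma:stabs5}, following exactly the template of \Cref{lemma:mustbestd}. The substantive work has already been done: once one knows the stabilizer of the arrangement is trivial (scalar), the uniqueness of realization maps up to scalars is automatic; the only other ingredient is the existence of at least one realization map, which is built into the very construction of $\mathcal W_{\tilde \Sigma_5}$.

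For existence, I would invoke \Cref{example:s5tilde}: the $\tilde \Sigma_5$ arrangement was defined via the system of subrepresentations construction (\Cref{subrepsystem}) applied to $V^5_{basic}$, with $W_1 = V_{basic}^4$ regarded as a $\Stab(1) \cong \tilde \Sigma_4$ subrepresentation and $W_i = g_i W_1$ for any lift $g_i \in \tilde \Sigma_5$ of the transposition swapping $1$ and $i$. By construction, a set-theoretic section of $\tilde \Sigma_5 \twoheadrightarrow \Sigma_5$ composed with the basic representation $\tilde \rho_{basic}\colon \tilde \Sigma_5 \to \GL_4(\C)$ produces a realization map, call it $\rho_{basic}\colon \Sigma_5 \to \GL_4(\C)$; this descends to a genuine homomorphism $\overline{\rho_{basic}}\colon \Sigma_5 \to \PGL_4(\C)$ precisely because the kernel $\Z/2\Z$ of $\tilde \Sigma_5 \twoheadrightarrow \Sigma_5$ acts as $\pm I$ on $V^5_{basic}$.

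For uniqueness, I would let $\rho\colon \Sigma_5 \to \GL_4(\C)$ be any other realization map for $\mathcal W_{\tilde \Sigma_5}$. Then for every $\sigma \in \Sigma_5$, both $\rho(\sigma)$ and $\rho_{basic}(\sigma)$ send $W_i$ to $W_{\sigma(i)}$, so the pointwise product
\[
c(\sigma) := \rho(\sigma)^{-1}\rho_{basic}(\sigma)
\]
lies in $\Stab(\mathcal W_{\tilde \Sigma_5})$, which by \Cref{lemma:stabs5} equals $\C^\times I$. Writing $c(\sigma) = \lambda_\sigma^{-1} I$ gives $\rho_{basic}(\sigma) = \lambda_\sigma \rho(\sigma)$, and therefore $\overline{\rho} = \overline{\rho_{basic}}$ as maps into $\PGL_4(\C)$.

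There is no real obstacle here; the only thing to be careful about is that realization maps are only set maps (see \Cref{remark:setmap}), so one should not try to argue via any homomorphism property of $\rho$ itself. The argument uses only that $\rho(\sigma)$ realizes the permutation action on the arrangement for each individual $\sigma$, which is exactly the defining property of a realization map and is sufficient to force $c(\sigma)$ into $\Stab(\mathcal W_{\tilde \Sigma_5})$ pointwise.
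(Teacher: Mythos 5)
Your proposal is correct and follows exactly the template the paper intends: the paper presents \Cref{corollary:reals5} as ``an immediate corollary'' of \Cref{lemma:stabs5}, mirroring the proof of \Cref{lemma:mustbestd}, and your argument (existence from the system-of-subrepresentations construction via the basic representation of $\tilde \Sigma_5$, uniqueness by observing $\sigma \mapsto \rho(\sigma)^{-1}\rho_{basic}(\sigma)$ lands in $\Stab(\mathcal W_{\tilde \Sigma_5}) = \C^\times I$ pointwise) is precisely that. Your parenthetical caution that $\rho$ is only a set map, so the stabilizer argument must be applied pointwise rather than via any homomorphism property, is a correct and worthwhile observation.
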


 \subsection{Non-existence of certain totally symmetric sets}
 Here we show that the $\tilde \Sigma_5$ arrangement {\em cannot} be used as the arrangement of eigenspaces for a totally symmetric set of matrices with nontrivial Jordan blocks.
\begin{lemma}
\label{lemma:halfdim1espc}
Let $\mathcal W_{\tilde \Sigma_5} = \{W_1, \dots, W_5\}$ denote the $\tilde \Sigma_5$ arrangement. There is no totally symmetric set $\mathcal A = \{A_1,\dots,A_5\} \subset \GL_{4}(\C)$ for which $E_{\lambda,1}(A_i) = W_i$ and $E_{\lambda,2}(A_i) = \C^{4}$.
\end{lemma}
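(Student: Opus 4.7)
The plan is to reduce the problem to a Schur's lemma computation for $\tilde\Sigma_4$-representations. Set $N_i := A_i - \lambda I$. The hypotheses give $N_i^2 = 0$, $\ker(N_i) = W_i$, and by rank considerations ($\dim(\ker) = 2$, $\dim(\im) \le \dim(\ker)$, and rank-nullity) also $\im(N_i) = W_i$. Because $\lambda I$ is central, $\{N_1, \dots, N_5\}$ inherits the total symmetry of $\mathcal A$ with exactly the same realization maps. My goal is to show $N_1 = 0$, which contradicts $\mathrm{rank}(N_1) = 2$.

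By \Cref{corollary:reals5}, every realizer $\rho(\sigma) \in \GL_4(\C)$ has the form $\lambda_\sigma T_\sigma$ for some scalar $\lambda_\sigma$, with $T_\sigma$ the basic projective representation. Since conjugation by a scalar is trivial, the induced $\Sigma_5$-action on $\End(\C^4)$ is a \emph{genuine} (not merely projective) linear action, factoring through $\PGL_4(\C) \to \mathrm{Aut}(\End(\C^4))$. The total symmetry condition therefore forces $N_1$ to be a fixed vector of $\mathrm{Stab}(1) \cong \Sigma_4$ in $\End(\C^4)$. Per \Cref{example:s5tilde}, when we restrict the basic representation to the preimage $\tilde\Sigma_4 \subset \tilde\Sigma_5$, we obtain a decomposition $\C^4 = V_{basic}^4 \oplus V_{basic}^{4,a}$ with $W_1 = V_{basic}^4$. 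In this decomposition, the conditions $\ker(N_1) = \im(N_1) = W_1$ force $N_1 = \bigl(\begin{smallmatrix} 0 & B \\ 0 & 0 \end{smallmatrix}\bigr)$ for some $B \in \mathrm{Hom}(V_{basic}^{4,a}, V_{basic}^4)$, and the $\Sigma_4$-invariance of $N_1$ (equivalently, $\tilde\Sigma_4$-invariance, since the center acts trivially on $\End(\C^4) = \C^4 \otimes (\C^4)^*$) becomes the requirement that $B$ be a $\tilde\Sigma_4$-intertwiner.

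The key point is then that $V_{basic}^4$ and $V_{basic}^{4,a}$ are the two associate basic spin irreps of $\tilde\Sigma_4$, and are \emph{non-isomorphic} as $\tilde\Sigma_4$-modules: they differ by tensoring with the pullback of the nontrivial sign character of $\Sigma_4$, as recorded in the spin character tables \cite[p.~43]{HH}. Schur's lemma therefore forces $B = 0$, hence $N_1 = 0$, which is the desired contradiction.

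The main subtlety, rather than an obstacle, is the projective-to-linear bookkeeping: one must confirm that although $\C^4$ carries only a projective $\Sigma_4$-representation, the induced action on $\End(\C^4)$ is linear and so the invariants $\End(\C^4)^{\Sigma_4}$ are well-defined and can be computed as $\End_{\tilde\Sigma_4}(V_{basic}^4) \oplus \mathrm{Hom}_{\tilde\Sigma_4}(V_{basic}^{4,a}, V_{basic}^4) \oplus \mathrm{Hom}_{\tilde\Sigma_4}(V_{basic}^4, V_{basic}^{4,a}) \oplus \End_{\tilde\Sigma_4}(V_{basic}^{4,a})$, whose middle two summands vanish by the non-isomorphism above.
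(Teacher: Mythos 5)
Your argument is correct, and it takes a genuinely different route from the paper's. Both proofs begin from the same place, namely \Cref{corollary:reals5}: the realization map for $\mathcal W_{\tilde\Sigma_5}$ is projectively unique, so the realizers for $\Stab(1)\cong\Sigma_4$ must act (up to scalars that are irrelevant for conjugation) by the $T_\sigma$'s of the basic spin representation, forcing $N_1 = A_1 - \lambda I$ to be $\tilde\Sigma_4$-invariant in $\End(\C^4)$. At that point the paper works in the explicit coordinates of \Cref{example:s5tildeexplicit} (where $W_1 = \binom{I}{0}$), writes $A_1 = \bigl(\begin{smallmatrix}\lambda I & X\\ 0 & \lambda I\end{smallmatrix}\bigr)$, extracts the three matrix equations $X = -P_{23}XP_{23}^{-1}$, $X = P_{34}XQ_{34}^{-1}$, $X = P_{45}XP_{45}^{-1}$ from the conditions $T_iA_1T_i^{-1}=A_1$, and shows by direct calculation (first pinning down $X = cP_{45}$ from the first and third equations, then contradicting the second) that only $X=0$ works. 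You instead change basis to the \emph{canonical} decomposition $\C^4 = V_{basic}^4 \oplus V_{basic}^{4,a}$ into the two associate spin irreps of $\tilde\Sigma_4$, observe that the kernel/image conditions place $N_1$ in the block $\Hom(V_{basic}^{4,a}, V_{basic}^4)$, note that $\tilde\Sigma_4$-invariance makes $N_1$ an intertwiner, and conclude $N_1 = 0$ by Schur's lemma because the two associate irreps are non-isomorphic. Your route is cleaner and more conceptual --- it explains \emph{why} the computation has to come out the way it does --- at the cost of invoking the non-isomorphism of the associate pair, which is a standard fact from the spin character theory of $\tilde\Sigma_4$ (indeed one the paper is already implicitly using when it sets up the $\tilde\Sigma_5$ arrangement via the decomposition $V_{basic}^5 = V_{basic}^4 \oplus V_{basic}^{4,a}$). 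The paper's proof is less illuminating but is a self-contained matrix verification that sidesteps that input, relying only on the explicit generators $T_2, T_3, T_4$.

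One small stylistic note: the ``projective-to-linear bookkeeping'' you flag is handled exactly as you say --- conjugation kills the central $\Z/2\Z$, so the $\tilde\Sigma_4$-action on $\End(\C^4)$ descends to a linear $\Sigma_4$-action and the invariants are well-defined --- but for the purposes of applying Schur's lemma it is simplest (as you do in the second paragraph) to just compute $\Hom_{\tilde\Sigma_4}$ directly, since $N_1$ commuting with all $T_{\tilde\sigma}$ for $\tilde\sigma \in \tilde\Sigma_4$ is exactly the intertwiner condition.
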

\begin{proof}
Supposing the existence of such, in our usual coordinates $A_1$ would have a representative of the form
\[
A_1 = \begin{pmatrix}
 \lambda I & X\\ 0 & \lambda I
\end{pmatrix}
\]
for some $X \in \GL_2(\C)$. According to \Cref{corollary:reals5}, the totally symmetric set $\mathcal A$ would have to have realization map given by the basic representation $\rho$ of $\tilde \Sigma_5$. In particular, the elements $T_i$ for $i \ge 2$, which represent the transpositions $(i\,i+1)$, must commute with $A_1$. Computing $T_i A_1 T_i^{-1}$ for $i = 2,3,4$ respectively and extracting the top right block, these give the following equations:
\[
X = -P_{23}XP_{23}^{-1}, \qquad X = P_{34} X Q_{34}^{-1}, \qquad X = P_{45}X P_{45}^{-1}.
\]
The first of these shows that $X$ is conjugate to $-X$ and hence the eigenvalues of $X$ must be $\pm \mu$ for some $\mu \in \C^\times$. 

Turning to the third equation, we observe that
\[
P_{45}  = \frac{i}{\sqrt{3}}\begin{pmatrix}
       \sqrt 2 & 1\\ 1 & -\sqrt 2
    \end{pmatrix}
    \]
has two distinct eigenspaces and trace zero. As $X$ commutes with $P_{45}$, it must preserve each eigenspace, and so have the same eigenspaces. As $X$ also has trace zero, it follows that $X = c P_{45}$ for some nonzero constant $c$.

Finally the second equation now implies that $P_{45} = P_{34} P_{45} Q_{34}^{-1}$, but this is directly computed to be false; as seen in the supplemental Mathematica notebook \cite{mathematica},
\[
P_{45} Q_{34} - P_{34}Q_{34} = \left(
\begin{array}{cc}
 0 & \sqrt{2} \\
 -\sqrt{2} & 0 \\
\end{array}
\right).
\]
\end{proof}

\bibliographystyle{alpha}
\bibliography{bib}

\end{document}